\def\a{\mathbf{a}}
\def\k{\mathbf{k}}
\def\p{\mathbf{p}}
\def\q{\mathbf{q}}
\def\u{\mathbf{u}}
\def\x{\mathbf{x}}
\newcommand{\LeftEqNo}{\let\veqno\@@leqno}
\begin{document}

\title{Singular Value Decompositions for Single-Curl Operators in
  Three-Dimensional Maxwell's Equations for Complex
  Media\thanks{Version \today}}

\author{ Ruey-Lin Chern\thanks{Institute of Applied Mechanics,
    National Taiwan University, Taipei 106, Taiwan ({\tt
      chernrl@ntu.edu.tw})} \and Han-En Hsieh\thanks{Department of
    Mathematics, National Taiwan University, Taipei 106, Taiwan ({\tt
      D99221002@ntu.edu.tw}).}  \and Tsung-Ming
  Huang\thanks{Department of Mathematics, National Taiwan Normal
    University, Taipei 116, Taiwan ({\tt min@ntnu.edu.tw}).}  \and
  Wen-Wei Lin\thanks{Department of Applied Mathematics, National Chiao
    Tung University, Hsinchu 300, Taiwan ({\tt
      wwlin@math.nctu.edu.tw}).}  \and Weichung Wang\thanks{Institute
    of Applied Mathematical Sciences, National Taiwan University,
    Taipei 106, Taiwan ({\tt wwang@ntu.edu.tw}).}}

\maketitle
    
\begin{abstract}
  This article focuses on solving the generalized eigenvalue problems
  (GEP) arising in the source-free Maxwell equation with
  magnetoelectric coupling effects that models three-dimensional
  complex media. The goal is to compute the smallest positive
  eigenvalues, and the main challenge is that the coefficient matrix
  in the discrete Maxwell equation is indefinite and degenerate. To
  overcome this difficulty, we derive a singular value decomposition
  (SVD) of the discrete single-curl operator and then explicitly
  express the basis of the invariant subspace corresponding to the
  nonzero eigenvalues of the GEP. Consequently, we reduce the GEP to a
  null space free standard eigenvalue problem (NFSEP) that contains
  only the nonzero (complex) eigenvalues of the GEP and can be solved
  by the shift-and-invert Arnoldi method without being disturbed by
  the null space. Furthermore, the basis of the eigendecomposition is
  chosen carefully so that we can apply fast Fourier transformation
  (FFT)-based matrix vector multiplication to solve the embedded
  linear systems efficiently by an iterative method.  For chiral and
  pseudochiral complex media, which are of great interest in
  magnetoelectric applications, the NFSEP can be further transformed
  to a null space free generalized eigenvalue problem whose
  coefficient matrices are Hermitian and Hermitian positive definite
  (HHPD-NFGEP).  This HHPD-NFGEP can be solved by using the invert
  Lanczos method without shifting.  Furthermore, the embedded linear
  system can be solved efficiently by using the conjugate gradient
  method without preconditioning and the FFT-based matrix vector
  multiplications.  Numerical results are presented to demonstrate the
  efficiency of the proposed methods.
\end{abstract}

\begin{keywords}
  Singular value decomposition, null space free method, discrete
  single-curl operator, the Maxwell equation, chiral medium,
  pseudochiral medium.
\end{keywords}

\begin{AMS}
  65F15, 65T50, 15A18, 15A23.
\end{AMS}

% ======================================================================
\section{Introduction} 
% ======================================================================

Understanding the eigenstructure of the discrete single-curl operator
$\nabla \times$ is key to developing efficient numerical simulations
for complex materials that are modeled by the Maxwell
equations. Biisotropic and bianisotropic materials are two
  important classes of complex materials \cite{WeigLakh_03}.  They
  have constantly drawn intensive studies in physical properties and
  applications.
  %%% \cite{ChengCui_06,MonzFore_05,Pendry_04}.
  For example, bianisotropic media are a special type of materials
  whose properties are characterized by the magnetoelectric as well as
  the permittivity and permeability tensors
  \cite{kong1972theorems,serdiukov2001electromagnetics}. Due to the strong
  modulation of the wave that arises from the magnetoelectric couplings,
  counterintuitive features such as negative refraction and backward
  waves may appear in bianisotropic media
  \cite{cheng2007negative,ChernChang_2013_Opt,mackay2009negative,tretyakov2007bianisotropic}.
  From the mathematical point of view, the distinctive feature
  associated with bianisotropic media is the single-curl operator
  in addition to the double-curl operator in the wave equation, which
  essentially changes the characters of the eigenwaves.

Mathematically, the propagation of electromagnetic waves in
biisotropic and bianisotropic materials is modeled by the
three-dimensional frequency domain source-free Maxwell equations
\cite{WeigLakh_03} with a set of constitutive relations.  In
particular, we have
\begin{subequations} 
  \label{eq:Maxwell_Eq}
  \begin{align}
    \nabla \times E &= \imath \omega B, \quad \nabla \cdot (\varepsilon E) = 0, \\
    \nabla \times H &= - \imath \omega D, \quad \nabla \cdot H = 0.
  \end{align}
\end{subequations}
where $\omega$ represents the frequency and $\varepsilon$ represents
the permittivity. $E$ and $H$ are the electric and magnetic fields,
respectively. Based on the Bloch theorem \cite{Kitt}, we aim to find the Bloch eigenfunctions $E$ and $H$ satisfying the
quasi-periodic conditions
\begin{align}
  E(\x + \a_{\ell}) = e^{\imath 2 \pi \k \cdot \a_{\ell}} E(\x), \quad
  H(\x + \a_{\ell}) = e^{\imath 2 \pi \k \cdot \a_{\ell}} H(\x)
  \label{eq:quasi_periodic}
\end{align}
for $\ell = 1, 2, 3$ \cite{ReedSimon_78}. Here, $\a_{1}$, $\a_{2}$,
and $\a_{3}$ are the lattice translation vectors. In this paper, we
consider the simple cubic lattice vectors $\a_{\ell} = a e_{\ell}$,
where $e_{\ell}$ is the $\ell$-th unit vector in $\mathbb{R}^{3}$ and
$a$ is a lattice constant. Note that all of the techniques developed here
can be applied to face-centered cubic lattice media.  The Bloch
wave vector in the first Brillouin zone is denoted as $2 \pi \k$
\cite{JoJoWiMe_08}.  $B$ and $D$ satisfy the constitutive relations
\begin{equation} 
  \label{eq:consti_rel_BD} 
  B =\mu H+\zeta E \mbox{ and } D =\varepsilon E+\xi H,
\end{equation}
where $\mu$ represents the permeability, and $\xi$ and $\zeta$ are
magnetoelectric parameters.  Note that $\varepsilon$, $\mu$, $\xi$, and
$\zeta$ are $3$-by-$3$ matrices in various forms for describing
different types of materials.

The Maxwell equations \eqref{eq:Maxwell_Eq} can be rewritten as
  the following quadratic eigenvalue problems (QEP), which are separate
  wave equations in terms of $E$ and $H$.
\begin{subequations} \label{eq:QEP_EH}
  \begin{align}
    \nabla \times \mu^{-1} \nabla \times E - 
    \imath  \omega \left[ \nabla \times \left( \mu^{-1} \zeta E \right) -
      \xi \mu^{-1} \nabla \times E  \right] - \omega^2 
    \left( \varepsilon - \xi \mu^{-1} \zeta \right) E &= 0;
    \label{eq:QEP_E} \\
    \nabla \times \varepsilon^{-1} \nabla \times H - \imath \omega
    \left[ \zeta \varepsilon^{-1} \nabla \times H - \nabla \times
      \left( \varepsilon^{-1} \xi H\right)\right] - \omega^2 \left(
      \mu - \zeta \varepsilon^{-1} \xi \right) H &= 0.
 \label{eq:QEP_H}
  \end{align}
\end{subequations}
In the one-dimensional case, we can apply the quasi-periodic
  conditions \eqref{eq:quasi_periodic} to \eqref{eq:QEP_EH} and then
  explicitly define the relations between $\k$ and $\omega$
  \cite{Chern_2013_PhysD,Chern_2013_Opt,ChernChang_2013_ApplPhys,ChernChang_2013_OptSoc,Lekner_96}. For
  higher dimensions, however, solving
  Eq.~\eqref{eq:quasi_periodic} efficiently remains an open question. We illustrate the
  difficulty by the following example. An explicit eigendecomposition
of the discrete double-curl operator $\nabla \times \nabla \times$ is
derived in \cite{HuangHsiehLinWang_SIMAX_13}. Applying this
eigendecomposition and assuming $\mu = 1$ and $\zeta = \xi =0$, we can
explicitly derive the invariant subspace of all nonzero eigenvalues
corresponding to the (discrete) eigenvalue problem
\eqref{eq:QEP_E}. Based on the invariant subspace, efficient numerical
methods can be developed to solve \eqref{eq:QEP_E}.  However, it is
not possible to apply this technique to solve the quadratic
eigenvalue problems \eqref{eq:QEP_EH} with $\zeta \neq 0$ and  $\xi \neq 0$ due to the following
difficulties.  (i) The eigendecomposition of the discrete double-curl
operator in \cite{HuangHsiehLinWang_SIMAX_13} cannot be applied to
solving the QEP directly because Eq. \eqref{eq:QEP_EH} contains both double-
and single-curl operators. (ii) In general, the double- and single-curl
operators in \eqref{eq:QEP_EH} cannot be diagonalized
simultaneously. Furthermore, should we find the eigendecomposition of
the single-curl operator, this decomposition cannot be applied to
solve the QEP directly because the single-curl operator terms
in \eqref{eq:QEP_EH}, e.g., $\nabla \times ( \mu^{-1} \zeta E)$ and
$\xi \mu^{-1} \nabla \times E$, are coupled with other terms such as
$\mu^{-1} \zeta$ and $\xi \mu^{-1}$.  (iii) It is difficult to find the
invariant subspace corresponding to the nonzero eigenvalues in the
quadratic eigenvalue problems.

While solving \eqref{eq:QEP_EH} is not recommended, we focus instead on the
original Maxwell equations \eqref{eq:Maxwell_Eq} and
rewrite it as a coupled generalized eigenvalue problem (GEP)
\begin{align}
  \begin{bmatrix}
    \nabla \times & 0 \\ 0 & \nabla \times
  \end{bmatrix} \begin{bmatrix} E \\ H \end{bmatrix} = \imath
  \omega \begin{bmatrix} \zeta & \mu \\ -\varepsilon & -\xi
  \end{bmatrix} \begin{bmatrix} E \\
    H \end{bmatrix}. \label{eq:conti_GEP_EH}
\end{align}
For the two-dimensional photonic band structure, the electromagnetic
transfer matrix method \cite{ChoBaiMiaRuh_97} is applied to the
coupled system, similar to \eqref{eq:conti_GEP_EH}. For the
three-dimensional case, to the best of our knowledge no method has yet been proposed to
  solve the generalized eigenvalue problem \eqref{eq:conti_GEP_EH}
  efficiently.

We make the following contributions to solve the discrete
three-dimensional generalized eigenvalue problem based on Yee's finite
difference discretization scheme \cite{Yee:66}.
\begin{itemize}
\item We first derive the singular value decomposition (SVD) of the
  discrete single-curl operator $\nabla \times $ in
  \eqref{eq:conti_GEP_EH}.

\item Using the SVD, we explore an explicit form of the basis for the
  invariant subspace corresponding to the nonzero eigenvalues of the
  GEP. Applying this basis, the GEP can be reduced to a null space
  free standard eigenvalue problem (NFSEP).  In this eigenvalue
  problem, the zero eigenvalues of the GEP are deflated so that the null
  space does not degrade the computational efficiency.

\item We show that all eigenvalues $\omega$ of the GEP are real
  provided the permittivity, permeability and magnetoelectric
  parameters satisfy particular assumptions.  These assumptions are
  applicable to a couple of important classes of complex media.
      
\item Under the same assumptions, we can reformulate the NFSEP as a
  null space free generalized eigenvalue problem $B_{r} x =
  \omega^{-1} A_{r} x$, where $B_{r}$ is Hermitian and $A_{r}$ is
  Hermitian positive definite. We demonstrate that this problem can be
  solved efficiently using the generalized Lanczos method 
  algorithmically and numerically.
\end{itemize}

This paper is outlined as follows.  In Section~\ref{sec:svd_C}, we
derive the singular value decomposition of the discrete single-curl
operator.  In Section~\ref{sec:NFM}, by applying the SVD, we derive a
null space free eigenvalue problem by deflating the zero eigenvalues
and keeping the nonzero eigenvalue unchanged. In
Section~\ref{sec:comp_app}, we discuss how to improve the solution
performance while simulating two important types of complex media.  In
Section~\ref{sec:numerical}, we demonstrate numerical results to
validate the correctness of proposed schemes and to measure the
performance of the schemes.  Finally, we present our conclusions in
Section~\ref{sec:conclude}.

Throughout this paper, we use the superscripts $\top$ and $*$ to denote the transpose
and the conjugate transpose of a matrix,
respectively. For the matrix operations, we let $\otimes$ be the
Kronecker product of two matrices.  The imaginary number $\sqrt{-1}$
is written as $\imath$, and the identity matrix of dimension $n$ is
written as $I_n$.

% ======================================================================
\section{Singular value decomposition of the discrete 
single-curl  operator} \label{sec:svd_C}
% ======================================================================

In this section, we derive an explicit expression of the SVD of the discrete
single-curl operator. Using this SVD, an efficient
null space free method to solve the target eigenvalue problem
\eqref{eq:conti_GEP_EH} is developed in Section~\ref{sec:NFM}.

We start from the derivation of the matrix representation of the
discrete single-curl operator. By using Yee's scheme \cite{Yee:66},
the discrete single-curl operators $\nabla \times E$ and $\nabla
\times H$ with $\a_{\ell} = a e_{\ell}$, $\ell = 1, 2, 3$ can be
represented in the matrix form $C E$ and $C^{\ast} H$,
respectively. Here,
\begin{align}
  {C} = \left[ \begin{array}{ccc}
      0      &   -{C}_3    &  {C}_2    \\
      {C}_3 &         0          & -{C}_1    \\
      -{C}_2 & {C}_1 & 0
    \end{array} \right] \in \mathbb{C}^{3n \times 3n}, 
  \label{eq:mtx_C}
\end{align}
with
\begin{subequations} \label{eq:mtx_C123}
  \begin{align}
    {C}_1 &= \delta_{x}^{-1} \left( {I}_{n_3} \otimes I_{n_2} \otimes {K}_{\a_1,n_1} \right)\in \mathbb{C}^{n\times n}, \\
    {C}_2 &= \delta_{y}^{-1} \left( {I}_{n_3} \otimes {K}_{{\a}_2,n_2}
      \otimes {I}_{n_1} \right) \in \mathbb{C}^{n \times n},\\
    {C}_3 &= \delta_z^{-1} \left( {K}_{{\a}_3, n_3} \otimes {I}_{n_2}
      \otimes {I}_{n_1} \right) \in \mathbb{C}^{n \times n},
  \end{align}
\end{subequations}
and %in which
\begin{align}
  {K}_{\a, m} = \left[ \begin{array}{crrr}
      -1                    &   1     &        &     \\
      & \ddots  & \ddots &     \\
      &         &  -1    &  1  \\
      e^{\imath 2\pi \k \cdot \a } & & & -1
    \end{array} \right] \in \mathbb{C}^{m \times m}. \label{eq:mtx_K_n1}
\end{align}
We use $n_1$, $n_2$, and $n_3$ to denote the numbers of grid points in the
$x$, $y$, and $z$ directions, respectively, and we define $n = n_1 n_2
n_3$.  We use $\delta_x$, $\delta_y$, and $\delta_z$ to denote the
associated mesh lengths along the $x$, $y$, and $z$ axial directions,
respectively. 

It is well known that the eigendecompositions of $C^{\ast}C$ and
$CC^{\ast}$ are closely related to the SVD of $C$. Therefore, we start the
derivation from the eigendecompositions of $C^{\ast} C$ and $C
C^{\ast}$. First, we introduce the notations to be used later.  Define
$\theta_{m,i} = \frac{\imath 2\pi i}{m}, \mbox{\ }\theta_{\a,m} =
\frac{\imath 2 \pi \k \cdot \a}{m},$
\begin{align}
% \nonumber \\
%  \theta_{m, i} + \theta_{\a, m} &\equiv \frac{\imath 2 \pi i }{m} +
%  \frac{\imath 2 \pi \k \cdot \a}{m},
  D_{\a,m} = \mbox{diag}\left( 1, e^{\theta_{\a,m}}, \cdots,  e^{(m-1)\theta_{\a,m}} \right), \label{eq:mtx_Dm} \\
  \u_{m,i} = \left[ \begin{array}{ccccc} 1 & e^{\theta_{m, i}} &
      \cdots & e^{(m-1) \theta_{m, i}}
    \end{array} \right]^{\top} \nonumber
\end{align}
for $i = 0, \ldots, m-1$ and
\begin{align}
  {U}_{m} &= \left[ \begin{array}{ccc} \u_{m,0} & \cdots & \u_{m,m-1}
    \end{array} \right]  \in \mathbb{C}^{m \times m},  \label{eq:mtx_Um} \\
  \Lambda_{\a,m} &= \mbox{diag} \left( \begin{array}{ccc}
      e^{\theta_{m, 0} + \theta_{\a, m}}-1 & \cdots & e^{\theta_{m, m-1} + \theta_{\a, m}}-1
    \end{array} \right). \nonumber %\label{eq:Lambda_am}
\end{align}
By the definition of $K_{\a,m}$ in \eqref{eq:mtx_K_n1}, it can be
verified that
\begin{align}
  K_{\a,m} \left( D_{\a,m} U_m \right) = \left( D_{\a,m} U_{m} \right)
  \Lambda_{\a,m}. \label{eq:eigdecomp_K}
\end{align}
Denote
\begin{align}
  {T} = \frac{1}{\sqrt{n}} \left( D_{\a_3,n_3}\otimes D_{\a_2,n_2}
    \otimes D_{\a_1,n_1}\right) \left({U}_{n_3} \otimes {U}_{n_2}
    \otimes {U}_{n_1} \right), \label{eq:mtx_T}
\end{align}
where $D_{\a,n_i}$ and $U_{n_i}$, $i = 1, 2, 3$, are given in
\eqref{eq:mtx_Dm} and \eqref{eq:mtx_Um}, respectively. It is
straightforward to check that $T$ is unitary.  Using
\eqref{eq:eigdecomp_K} and the property
\begin{align}
  (A_1\otimes A_2\otimes A_3)(B_1\otimes B_2\otimes B_3) =
(A_1B_1)\otimes (A_2B_2)\otimes(A_3B_3), \label{eq:prop_tensor}
\end{align}
the eigendecompositions of $C_{\ell}$ for $\ell = 1, 2, 3$ can be
obtained immediately from the following theorem.

\begin{theorem} \label{thm:SchurDecomp_Ci} $C_1$, $C_2$ and $C_3$ can
  be diagonalized by the unitary matrix $T$ in the forms
  \begin{subequations} \label{eq:eigendecomp_Cis}
    \begin{align}
      C_1 {T}& =  \delta_x^{-1} {T} \left( {I}_{n_3} \otimes {I}_{n_2} \otimes \Lambda_{{\a}_1,n_1} \right) \equiv T \Lambda_1, \label{eq:eigendecom_C1} \\
      C_2 {T}& =  \delta_y^{-1} {T} \left( {I}_{n_3} \otimes \Lambda_{{\a}_2,n_2} \otimes {I}_{n_1} \right)\equiv T\Lambda_2, \label{eq:eigendecom_C2} \\
      C_3 {T} &= \delta_z^{-1} {T} \left( \Lambda_{{\a}_3,n_3} \otimes
        {I}_{n_2} \otimes {I}_{n_1} \right) \equiv T
      \Lambda_3. \label{eq:eigendecom_C3}
    \end{align}
  \end{subequations}
\end{theorem}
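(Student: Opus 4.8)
The plan is to prove each of the three identities in \eqref{eq:eigendecomp_Cis} by a direct computation that repeatedly applies the Kronecker mixed-product rule \eqref{eq:prop_tensor} together with the one-dimensional eigendecomposition \eqref{eq:eigdecomp_K}. The argument is the same for $C_1$, $C_2$, and $C_3$ up to which slot of the triple Kronecker product the nontrivial factor $K_{\a_\ell,n_\ell}$ occupies; according to the index convention in \eqref{eq:mtx_C123}, that factor sits in the rightmost slot for $C_1$, the middle slot for $C_2$, and the leftmost slot for $C_3$. I would carry out the details for $C_1$ and then indicate the verbatim modification for the other two.

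First I would rewrite $T$ from \eqref{eq:mtx_T} in the collected form $T = \frac{1}{\sqrt{n}}\left( (D_{\a_3,n_3}U_{n_3}) \otimes (D_{\a_2,n_2}U_{n_2}) \otimes (D_{\a_1,n_1}U_{n_1}) \right)$, which is a single application of \eqref{eq:prop_tensor}. Then, inserting the definition of $C_1$ from \eqref{eq:mtx_C123}, I would compute $C_1 T = \frac{\delta_x^{-1}}{\sqrt{n}}\left( I_{n_3}\otimes I_{n_2}\otimes K_{\a_1,n_1} \right)\left( (D_{\a_3,n_3}U_{n_3}) \otimes (D_{\a_2,n_2}U_{n_2}) \otimes (D_{\a_1,n_1}U_{n_1}) \right)$ and push the three factors through the tensor product slot by slot via \eqref{eq:prop_tensor}: the first two slots are unchanged since $I_{n_j}(D_{\a_j,n_j}U_{n_j}) = D_{\a_j,n_j}U_{n_j}$, while in the third slot \eqref{eq:eigdecomp_K} gives $K_{\a_1,n_1}(D_{\a_1,n_1}U_{n_1}) = (D_{\a_1,n_1}U_{n_1})\Lambda_{\a_1,n_1}$.

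The result so far is $\frac{\delta_x^{-1}}{\sqrt{n}}\left( (D_{\a_3,n_3}U_{n_3}) \otimes (D_{\a_2,n_2}U_{n_2}) \otimes (D_{\a_1,n_1}U_{n_1}\Lambda_{\a_1,n_1}) \right)$, and one final application of \eqref{eq:prop_tensor} factors the diagonal matrix out on the right, giving $\frac{\delta_x^{-1}}{\sqrt{n}}\left( (D_{\a_3,n_3}U_{n_3}) \otimes (D_{\a_2,n_2}U_{n_2}) \otimes (D_{\a_1,n_1}U_{n_1}) \right)\left( I_{n_3}\otimes I_{n_2}\otimes \Lambda_{\a_1,n_1} \right) = \delta_x^{-1}\,T\left( I_{n_3}\otimes I_{n_2}\otimes \Lambda_{\a_1,n_1} \right)$, which is \eqref{eq:eigendecom_C1}. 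Running the identical computation with $K_{\a_2,n_2}$ in the middle slot yields \eqref{eq:eigendecom_C2}, and with $K_{\a_3,n_3}$ in the leftmost slot yields \eqref{eq:eigendecom_C3}. Finally, $T$ is unitary — this was already observed before the theorem and follows at once from $U_m^\ast U_m = m I_m$ and the unimodularity of $D_{\a,m}$ — so these are genuine unitary diagonalizations.

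I do not expect a real obstacle; the proof is a few lines of tensor bookkeeping. The only points that require care are keeping track of which Kronecker slot is being acted on (the mismatch between the subscript of $C_\ell$ and the slot position of its nontrivial block is a natural place to slip) and phrasing each push-through step as a product of exactly two triple-tensors, so that \eqref{eq:prop_tensor} applies literally, with the scalars $1/\sqrt{n}$ and $\delta_x^{-1}$ carried along inertly.
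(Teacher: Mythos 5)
Your proposal is correct and follows exactly the route the paper intends: the paper gives no separate proof but states that the theorem follows immediately from the one-dimensional relation \eqref{eq:eigdecomp_K} and the mixed-product property \eqref{eq:prop_tensor}, which is precisely the slot-by-slot computation you carry out, including the correct identification of which Kronecker slot holds $K_{\a_\ell,n_\ell}$ for each $C_\ell$. Your remark on the unitarity of $T$ (via $U_m^{\ast}U_m = mI_m$, the unimodular diagonal $D_{\a,m}$, and the factor $1/\sqrt{n}$) matches the paper's unproved assertion as well, so nothing is missing.
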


We now define two intermediate matrices $\Lambda_{p}$ and
$\Lambda_{q}$ that are used in the eigendecompositions of $C^{\ast} C$
and $C C^{\ast}$.
\begin{lemma} \label{lem:nonsingular_full_rank} Let $\Lambda_1$,
  $\Lambda_2$ and $\Lambda_3$ be given in
  \eqref{eq:eigendecomp_Cis}. Define
  \begin{align}
    \Lambda_q &= \Lambda_1^{*} \Lambda_1 + \Lambda_2^{*} \Lambda_2 +
    \Lambda_3^{*} \Lambda_3, \quad \Lambda_{p} = \begin{bmatrix} \beta
      \Lambda_3 - \Lambda_2 \\ \Lambda_1 - \alpha \Lambda_3 \\ \alpha
      \Lambda_2 - \beta
      \Lambda_1 \end{bmatrix}\label{eq:mtx_Lambda_p_q}
  \end{align}
  with $\alpha, \beta \neq 0$. Assume that the vector $\k =
  (\mathrm{k}_1, \mathrm{k}_2, \mathrm{k}_3)^{\top}$ in
  \eqref{eq:quasi_periodic} is nonzero with $0 \leq \mathrm{k}_1,
  \mathrm{k}_2, \mathrm{k}_3 \leq \frac{1}{2}$.  Then, $\Lambda_{q}$ is
  positive definite, and $\Lambda_{p}$ is of full column rank, provided
  that $\alpha \delta_x \neq \beta \delta_y$ and $\delta_z \neq \beta
  \delta_y$.
\end{lemma}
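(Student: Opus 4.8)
The plan is to reduce both assertions to pointwise statements about diagonal entries, exploiting that $\Lambda_1,\Lambda_2,\Lambda_3$ defined in Theorem~\ref{thm:SchurDecomp_Ci} --- and hence $\Lambda_q$ and each block of $\Lambda_p$ --- are diagonal matrices. Index a coordinate $t\in\{1,\dots,n\}$ by a triple $(j_1,j_2,j_3)$ with $j_\ell\in\{0,\dots,n_\ell-1\}$; the $t$-th diagonal entry of $\Lambda_\ell$ then has the form $\lambda_\ell^{(t)}=\delta_\ell^{-1}(e^{\imath\phi_\ell}-1)$ (with $\delta_1=\delta_x,\ \delta_2=\delta_y,\ \delta_3=\delta_z$), where $\phi_\ell\in[0,2\pi)$ is the phase carried by the $j_\ell$-th diagonal entry of $\Lambda_{\a_\ell,n_\ell}$. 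The elementary fact I would record first is that, because $0\le j_\ell\le n_\ell-1$ and $0\le\mathrm{k}_\ell\le\frac12$, one has $\phi_\ell=0$, i.e.\ $\lambda_\ell^{(t)}=0$, precisely when $j_\ell=0$ and $\mathrm{k}_\ell=0$. In particular $\mathrm{k}_\ell\neq0$ makes $\Lambda_\ell$ nonsingular, and $\lambda_1^{(t)}=\lambda_2^{(t)}=\lambda_3^{(t)}=0$ for some $t$ forces $\k=0$.

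The claim for $\Lambda_q$ is then immediate: $\Lambda_q=\Lambda_1^{*}\Lambda_1+\Lambda_2^{*}\Lambda_2+\Lambda_3^{*}\Lambda_3$ is a sum of positive semidefinite diagonal matrices, hence positive definite iff no diagonal entry vanishes; since $\k\neq0$ some $\mathrm{k}_{\ell_0}>0$, so $\Lambda_{\ell_0}^{*}\Lambda_{\ell_0}$ is already positive definite and therefore so is $\Lambda_q$.

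For $\Lambda_p$ I would note that its $t$-th column is supported only on rows $t$, $n+t$, $2n+t$, so distinct columns have disjoint supports; hence $\Lambda_p$ has full column rank iff no column vanishes, i.e.\ iff for every $t$ the numbers $\beta\lambda_3^{(t)}-\lambda_2^{(t)}$, $\lambda_1^{(t)}-\alpha\lambda_3^{(t)}$, $\alpha\lambda_2^{(t)}-\beta\lambda_1^{(t)}$ are not all zero. (Equivalently: this triple is the formal cross product $(\alpha,\beta,1)\times(\lambda_1^{(t)},\lambda_2^{(t)},\lambda_3^{(t)})$, which is zero iff $(\lambda_1^{(t)},\lambda_2^{(t)},\lambda_3^{(t)})$ is a scalar multiple of $(\alpha,\beta,1)$.) Suppose, for some $t$, that all three vanish; then $\lambda_1^{(t)}=\alpha\lambda_3^{(t)}$ and $\lambda_2^{(t)}=\beta\lambda_3^{(t)}$ (the third relation is then automatic). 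If $\lambda_3^{(t)}=0$ then $\lambda_1^{(t)}=\lambda_2^{(t)}=0$ as well, whence $\k=0$ by the fact above, a contradiction. If $\lambda_3^{(t)}\neq0$, then $\lambda_1^{(t)},\lambda_2^{(t)}\neq0$ (as $\alpha,\beta\neq0$), so $\phi_1,\phi_2,\phi_3\in(0,2\pi)$, and dividing yields $\frac{\delta_z}{\delta_y}\cdot\frac{e^{\imath\phi_2}-1}{e^{\imath\phi_3}-1}=\beta$ and $\frac{\delta_y}{\delta_x}\cdot\frac{e^{\imath\phi_1}-1}{e^{\imath\phi_2}-1}=\frac{\alpha}{\beta}$.

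The step I expect to be the crux is to rule out these two identities. Writing $e^{\imath\phi}-1=2\imath\,e^{\imath\phi/2}\sin(\phi/2)$ gives $\frac{e^{\imath\phi_2}-1}{e^{\imath\phi_3}-1}=e^{\imath(\phi_2-\phi_3)/2}\cdot\frac{\sin(\phi_2/2)}{\sin(\phi_3/2)}$, whose last factor is strictly positive because $\phi_2/2,\phi_3/2\in(0,\pi)$; since $\beta$ is real this forces $e^{\imath(\phi_2-\phi_3)/2}\in\mathbb{R}$, hence $\phi_2-\phi_3\in2\pi\mathbb{Z}$, hence (as $|\phi_2-\phi_3|<2\pi$) $\phi_2=\phi_3$, and then the ratio equals $1$, so $\beta\delta_y=\delta_z$, contrary to hypothesis. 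Running the same computation on $\frac{e^{\imath\phi_1}-1}{e^{\imath\phi_2}-1}$ forces $\phi_1=\phi_2$ and $\alpha\delta_x=\beta\delta_y$, also excluded. Hence no column of $\Lambda_p$ vanishes, so $\Lambda_p$ has full column rank. The only genuine bookkeeping is in reading the Kronecker-product diagonals of $\Lambda_1,\Lambda_2,\Lambda_3$ off Theorem~\ref{thm:SchurDecomp_Ci} and in the elementary determination, under the range $0\le\mathrm{k}_\ell\le\frac12$, of the sole index at which $\lambda_\ell^{(t)}$ can vanish; I have also used here that $\alpha$ and $\beta$ are real.
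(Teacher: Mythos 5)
Your proof is correct and follows essentially the same route as the paper's appendix: reduce both claims to the diagonal entries, use $0\le \mathrm{k}_\ell\le\tfrac12$ together with $\k\neq 0$ to pin down the only indices at which an entry of $\Lambda_\ell$ can vanish, and show that a vanishing column of $\Lambda_p$ would force $\beta\delta_y=\delta_z$ or $\alpha\delta_x=\beta\delta_y$. The only cosmetic difference is that you extract the common phase via $e^{\imath\phi}-1=2\imath\,e^{\imath\phi/2}\sin(\phi/2)$, whereas the paper separates real and imaginary parts and squares (using $|e^{\imath\theta}-1|^2=2-2\cos\theta$); both versions hinge, as you explicitly note and the paper uses implicitly, on $\alpha$ and $\beta$ being real.
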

\begin{proof}
  See the appendix.
\end{proof}

Using the definitions of $C$ and $\Lambda_{q}$ in \eqref{eq:mtx_C} and
\eqref{eq:mtx_Lambda_p_q}, respectively, and the eigendecompositions
of $C_{\ell}$ in Theorem~\ref{thm:SchurDecomp_Ci}, the null spaces of
$C^{\ast}C$ and $CC^{\ast}$ are derived as follows.
\begin{theorem}
  Assume $\k = (\mathrm{k}_1, \mathrm{k}_2, \mathrm{k}_3)^{\top}\neq
  0$ with $0 \leq \mathrm{k}_1, \mathrm{k}_2, \mathrm{k}_3 \leq
  \frac{1}{2}$. Define
  \begin{align}
    Q_0 = \left( I_3 \otimes T\right) \begin{bmatrix} \Lambda_1 \\
      \Lambda_2 \\ \Lambda_3
    \end{bmatrix} \Lambda_{q}^{-1/2} \equiv \left( I_3 \otimes
      T\right) \Pi_0, \quad P_0 = \left( I_3 \otimes T\right)
    \overline{\Pi_0}.
    \label{eq:mtx_Q0}
  \end{align}
  Then, $Q_0$ and $P_0$ form orthogonal bases of the null spaces of
  $C^{\ast} C$ and $C C^{\ast}$, respectively.
\end{theorem}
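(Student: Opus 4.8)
\medskip
\noindent\emph{Proof plan.}
The plan is to strip off the unitary factor $I_3\otimes T$ and reduce everything to a statement about diagonal matrices. First I would form the unitarily similar matrix $\widehat C:=(I_3\otimes T)^*\,C\,(I_3\otimes T)$: substituting the block form \eqref{eq:mtx_C} of $C$ and using $C_\ell T=T\Lambda_\ell$ from Theorem~\ref{thm:SchurDecomp_Ci}, a one-line block computation gives
\[
  \widehat C=\begin{bmatrix} 0 & -\Lambda_3 & \Lambda_2\\ \Lambda_3 & 0 & -\Lambda_1\\ -\Lambda_2 & \Lambda_1 & 0\end{bmatrix},
\]
the block cross-product matrix built from the mutually commuting diagonal matrices $\Lambda_1,\Lambda_2,\Lambda_3$. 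Since $I_3\otimes T$ is unitary and $\ker(M^*M)=\ker M$ for any matrix $M$, we have $\ker(C^*C)=(I_3\otimes T)\ker\widehat C$ and $\ker(CC^*)=(I_3\otimes T)\ker\widehat C^{\,*}$; thus it suffices to show that the columns of $\Pi_0$ form an orthonormal basis of $\ker\widehat C$ and the columns of $\overline{\Pi_0}$ form an orthonormal basis of $\ker\widehat C^{\,*}$.

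Next I would dispatch the two easy halves for $Q_0$. Because the $\Lambda_\ell$ commute, $\widehat C\,[\Lambda_1^\top\ \Lambda_2^\top\ \Lambda_3^\top]^\top=0$, hence $\widehat C\,\Pi_0=0$ and every column of $Q_0$ lies in $\ker(C^*C)$. Orthonormality is immediate from the definition of $\Lambda_q$ in \eqref{eq:mtx_Lambda_p_q}: $\Pi_0^*\Pi_0=\Lambda_q^{-1/2}(\Lambda_1^*\Lambda_1+\Lambda_2^*\Lambda_2+\Lambda_3^*\Lambda_3)\Lambda_q^{-1/2}=\Lambda_q^{-1/2}\Lambda_q\Lambda_q^{-1/2}=I_n$, where the inverse square root is well defined because $\Lambda_q$ is (diagonal and) positive definite by Lemma~\ref{lem:nonsingular_full_rank}; applying the unitary $I_3\otimes T$ preserves orthonormality, so $Q_0$ has orthonormal columns as well.

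The main obstacle is the dimension count: one must show $\dim\ker\widehat C=n$, so that the $n$ orthonormal columns of $\Pi_0$ actually span the whole kernel rather than sitting inside a larger null space. I would handle this by applying a permutation of the $3n$ coordinates that interleaves the three blocks and so block-diagonalizes $\widehat C$ into $n$ scalar $3\times3$ blocks, the $j$-th being the cross-product matrix of $\vec\lambda_j:=(\lambda_{1,j},\lambda_{2,j},\lambda_{3,j})^\top$, where $\lambda_{\ell,j}$ denotes the $j$-th diagonal entry of $\Lambda_\ell$. A $3\times3$ cross-product matrix is skew and of odd size, hence singular, and it has rank exactly $2$ whenever $\vec\lambda_j\neq0$ (rank $0$ otherwise). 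Now the $j$-th diagonal entry of $\Lambda_q$ equals $\|\vec\lambda_j\|_2^2$, and this is strictly positive for every $j$ since $\Lambda_q$ is positive definite; therefore every block has rank $2$, $\mathrm{rank}\,\widehat C=2n$, and $\dim\ker\widehat C=3n-2n=n$. This identifies $Q_0$ as an orthonormal basis of $\ker(C^*C)$.

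Finally, for $CC^*$ I would exploit the block skew-symmetry $\widehat C^\top=-\widehat C$, valid because the $\Lambda_\ell$ are diagonal; this gives $\widehat C^{\,*}=\overline{\widehat C^\top}=-\overline{\widehat C}$, so $\ker\widehat C^{\,*}=\ker\overline{\widehat C}=\overline{\ker\widehat C}$, which is spanned by the (still orthonormal) columns of $\overline{\Pi_0}$. Transporting back through $I_3\otimes T$ shows that $P_0=(I_3\otimes T)\overline{\Pi_0}$ is an orthonormal basis of $\ker(CC^*)$, completing the argument.
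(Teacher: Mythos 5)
Your proof is correct. Note, however, that the paper never writes out a proof of this theorem at all: it asserts it as following from the definitions and Theorem~\ref{thm:SchurDecomp_Ci}, and the completeness of $Q_0$, $P_0$ as null-space bases is only implicit in the later claim (Theorem~\ref{thm:eigendecomp_A}) that the explicitly constructed $Q=[Q_1,Q_2,Q_0]$ and $P=[P_2,P_1,P_0]$ of \eqref{eq:mtx_QdPd} are unitary and diagonalize $C^{\ast}C$ and $CC^{\ast}$ — i.e., the paper's route, following \cite{HuangHsiehLinWang_SIMAX_13}, is to build the full orthonormal eigenbasis ($n$ columns each for the two copies of $\Lambda_q$ plus $n$ for the kernel) and read off the null space from the completed decomposition. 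You instead prove the theorem in isolation: conjugating by the unitary $I_3\otimes T$ reduces $C$ to the block cross-product matrix $\widehat C$ of the commuting diagonal factors $\Lambda_1,\Lambda_2,\Lambda_3$, and a perfect-shuffle permutation splits $\widehat C$ into $n$ scalar $3\times 3$ cross-product blocks, each of rank exactly $2$ because the corresponding diagonal entry of $\Lambda_q$ is positive (Lemma~\ref{lem:nonsingular_full_rank}); this yields $\dim\ker C=n$ directly, and the identity $\widehat C^{\ast}=-\overline{\widehat C}$ transfers the result to $CC^{\ast}$. Your dimension count is the genuinely new ingredient relative to what the paper records, and it buys a self-contained proof that does not presuppose the construction or orthonormality of $Q_1,Q_2,P_1,P_2$; the paper's approach buys more, since the completed $Q,P$ are needed anyway for the SVD in Theorem~\ref{thm:svd_C}. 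One cosmetic remark: since the $\lambda_{\ell,j}$ are complex, the rank-$2$ claim for a nonzero cross-product block should be justified by exhibiting a nonzero $2\times 2$ minor (e.g.\ $\lambda_{\ell,j}^2$ for any nonvanishing component) rather than by real geometric intuition — a one-line addition.
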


Next, we apply the techniques developed in
\cite{HuangHsiehLinWang_SIMAX_13} to form the orthogonal bases for the
range spaces of $C^{\ast} C$ and $CC^{\ast}$. First, by considering
the full column rank matrix $T_1 = [ \alpha T^{\top}, \beta T^{\top},
T^{\top} ]^{\top}$ with nonzero $\alpha$ and $\beta$, and taking the
orthogonal projection of $T_1$ with respect to $Q_0$ and $P_0$,
respectively, we have
\begin{subequations} \label{eq:orth_proj}
  \begin{align}
    Q_1 &= \left(I - Q_0 Q_0^{\ast} \right) T_1 \left( \Lambda_{p}^{\ast} \Lambda_{p} \Lambda_q^{-1} \right)^{-1/2} \nonumber \\
    &= \left( I_3 \otimes T\right) \begin{bmatrix}
      (\alpha \Lambda_2-\beta \Lambda_1)\Lambda_2^{\ast} - (\Lambda_1-\alpha \Lambda_3)\Lambda_3^{\ast}   \\
      (\beta \Lambda_3 -  \Lambda_2) \Lambda_3^{\ast} -(\alpha \Lambda_2 - \beta \Lambda_1) \Lambda_1^{\ast}  \\
      (\Lambda_1 - \alpha \Lambda_3) \Lambda_1^{\ast} - (\beta
      \Lambda_3 - \Lambda_2) \Lambda_2^{\ast}
    \end{bmatrix} \left( \Lambda_{p}^{\ast} \Lambda_{p} \Lambda_q^{-1} \right)^{-1/2} \nonumber \\
    &\equiv \left( I_3 \otimes T \right) \Pi_1, \label{eq:mtx_Pi_1} \\
    P_1 &= \left(I - P_0 P_0^{\ast} \right) T_1 \left(
      \Lambda_{p}^{\ast} \Lambda_{p} \Lambda_q^{-1} \right)^{-1/2} =
    \left( I_3 \otimes T \right) \overline{\Pi_1}.
  \end{align}
\end{subequations}
Then, $Q_1$ and $P_1$ are orthogonal, and $(C^{\ast} C) Q_1 = Q_1
\Lambda_{q}$ and $(C C^{\ast}) P_1 = P_1 \Lambda_{q}$.  Second, to
form the remaining part of the orthogonal basis for the range spaces of
$C^{\ast}C$ and $C C^{\ast}$, we apply the discrete curl and dual-curl
operators on $T_1$, respectively. That is, we pre-multiply $C^{\ast}$
and $C$ by $T_1$ to obtain
\begin{subequations} \label{eq:mtx_Q2}
  \begin{align}
    Q_2 &= C^{\ast} T_1 \left(\Lambda_{p}^{\ast} \Lambda_{p}
    \right)^{-1/2} = \left( I_3 \otimes T \right) \begin{bmatrix}
      \beta \Lambda_3^{\ast} -  \Lambda_2^{\ast} \\
      \Lambda_1^{\ast} - \alpha \Lambda_3^{\ast} \\
      \alpha \Lambda_2^{\ast} - \beta \Lambda_1^{\ast}
    \end{bmatrix} \left(\Lambda_{p}^{\ast} \Lambda_{p} \right)^{-1/2}
    \nonumber \\
    &\equiv \left( I_3 \otimes T \right) \Pi_2,
    \label{eq:mtx_Pi_2} \\
    P_2 &= C T_1 \left(\Lambda_{p}^{\ast} \Lambda_{p} \right)^{-1/2} =
    \left( I_3 \otimes T \right) \left( - \overline{\Pi_2} \right).
  \end{align}
\end{subequations}
It holds that $(C^{\ast} C) Q_2 = Q_2 \Lambda_{q}$ and $(C C^{\ast})
P_2 = P_2 \Lambda_{q}$.  From \eqref{eq:mtx_Q0} to \eqref{eq:mtx_Q2}, we
define
\begin{subequations}\label{eq:mtx_QdPd}
  \begin{align}
    Q &\equiv \begin{bmatrix} Q_{1} & Q_{2} & Q_{0} \end{bmatrix} =
    \left( I_3 \otimes T \right) \begin{bmatrix} \Pi_1 & \Pi_2 & \Pi_0
    \end{bmatrix}, \\
    P &\equiv \begin{bmatrix} P_2 & P_1 & P_0 \end{bmatrix} = \left(
      I_3 \otimes T \right) \begin{bmatrix} -\overline{\Pi_2} &
      \overline{\Pi_1} & \overline{\Pi_0} \,
    \end{bmatrix}.
  \end{align}
\end{subequations}
Then, the eigendecompositions of $C^{\ast} C$ and $C C^{\ast}$ can be
summarized as follows.

\begin{theorem} \label{thm:eigendecomp_A} Assume that the vector $\k =
  (\mathrm{k}_1, \mathrm{k}_2, \mathrm{k}_3)^{\top}$ in
  \eqref{eq:quasi_periodic} is nonzero with $0 \leq \mathrm{k}_1,
  \mathrm{k}_2, \mathrm{k}_3 \leq \frac{1}{2}$.  Then, $Q$ and $P$ are
  unitary, and
  \begin{align}
    C^{\ast} C = Q\, \mbox{\rm diag}\left( \Lambda_{q}, \Lambda_{q}, 0
    \right) Q^{\ast}, \quad C C^{\ast} = P\, \mbox{\rm diag}\left(
      \Lambda_{q}, \Lambda_{q}, 0\right)
    P^{\ast}. \label{eq:eigendecom_CHC}
  \end{align}
\end{theorem}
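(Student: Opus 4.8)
The plan is to transport everything to the common eigenbasis $T$ of Theorem~\ref{thm:SchurDecomp_Ci}, where $C$ becomes a ``curl'' matrix with \emph{diagonal} blocks, and then to decouple $C^{*}C$ into $n$ independent $3\times3$ problems. Since $T$ is unitary, we may write $C=(I_{3}\otimes T)\,\mathcal{L}\,(I_{3}\otimes T)^{*}$, where $\mathcal{L}$ is obtained from the block matrix \eqref{eq:mtx_C} by replacing each $C_{\ell}$ with $\Lambda_{\ell}$; hence $C^{*}C=(I_{3}\otimes T)(\mathcal{L}^{*}\mathcal{L})(I_{3}\otimes T)^{*}$ and $CC^{*}=(I_{3}\otimes T)(\mathcal{L}\mathcal{L}^{*})(I_{3}\otimes T)^{*}$, and because $Q=(I_{3}\otimes T)[\,\Pi_{1}\ \Pi_{2}\ \Pi_{0}\,]$, $P=(I_{3}\otimes T)[\,-\overline{\Pi_{2}}\ \overline{\Pi_{1}}\ \overline{\Pi_{0}}\,]$ and $I_{3}\otimes T$ is unitary, it suffices to prove the two assertions with $C,Q,P$ replaced by $\mathcal{L}$, $[\,\Pi_{1}\ \Pi_{2}\ \Pi_{0}\,]$, $[\,-\overline{\Pi_{2}}\ \overline{\Pi_{1}}\ \overline{\Pi_{0}}\,]$. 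Because all $\Lambda_{\ell}$ are diagonal, $\mathcal{L}^{*}\mathcal{L}$ has diagonal $n\times n$ blocks, so after the permutation that orders the $3n$ indices by grid mode first it is the direct sum over $j=1,\dots,n$ of the $3\times3$ Hermitian matrices $M_{j}=\|\lambda_{j}\|^{2}I_{3}-\lambda_{j}\lambda_{j}^{*}$, where $\lambda_{j}=\big((\Lambda_{1})_{jj},(\Lambda_{2})_{jj},(\Lambda_{3})_{jj}\big)^{\top}\in\mathbb{C}^{3}$ and $\|\lambda_{j}\|^{2}=(\Lambda_{q})_{jj}$ (this is the Gram matrix of the cross-product matrix of $\lambda_{j}$, and can be checked entrywise). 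By Lemma~\ref{lem:nonsingular_full_rank}, $(\Lambda_{q})_{jj}>0$, so $\lambda_{j}\neq0$, and thus $M_{j}$ annihilates $\lambda_{j}$ and acts as $(\Lambda_{q})_{jj}I$ on $\lambda_{j}^{\perp}$.

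Next I would identify the columns of $[\,\Pi_{1}\ \Pi_{2}\ \Pi_{0}\,]$ mode by mode. The $j$th column of $\Pi_{0}$ is $\lambda_{j}/\|\lambda_{j}\|$, so $\mathcal{L}^{*}\mathcal{L}\,\Pi_{0}=0$ and $\Pi_{0}^{*}\Pi_{0}=\Lambda_{q}^{-1/2}\Lambda_{q}\Lambda_{q}^{-1/2}=I$. With $t=(\alpha,\beta,1)^{\top}$ (the nonzero real scalars of Lemma~\ref{lem:nonsingular_full_rank}), the $j$th columns of $\Pi_{1}$ and $\Pi_{2}$ are, up to the positive diagonal normalizing factors $(\Lambda_{p}^{*}\Lambda_{p}\Lambda_{q}^{-1})^{-1/2}$ and $(\Lambda_{p}^{*}\Lambda_{p})^{-1/2}$ (which exist because $\Lambda_{p}$ has full column rank), scalar multiples of $\overline{\lambda_{j}}\times(t\times\lambda_{j})$ and $\overline{t\times\lambda_{j}}$; both of these vectors lie in $\lambda_{j}^{\perp}$ (each time, a scalar triple product with a repeated entry), so they are $(\Lambda_{q})_{jj}$-eigenvectors of $M_{j}$, which recovers $\mathcal{L}^{*}\mathcal{L}\,\Pi_{1}=\Pi_{1}\Lambda_{q}$ and $\mathcal{L}^{*}\mathcal{L}\,\Pi_{2}=\Pi_{2}\Lambda_{q}$ (as already announced before the theorem). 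For orthonormality: $\Pi_{2}^{*}\Pi_{2}=I$ since the unnormalized $\Pi_{2}$ equals $\overline{\Lambda_{p}}$ and $\Lambda_{p}^{*}\Lambda_{p}$ is a real positive diagonal matrix; $\Pi_{1}^{*}\Pi_{1}=I$ reduces to the identity $\|t\times\lambda\|^{2}=\|t\|^{2}\|\lambda\|^{2}-|t\cdot\lambda|^{2}$ for real $t$ and complex $\lambda$ (split $\lambda$ into real and imaginary parts); and the cross-block products $\Pi_{0}^{*}\Pi_{1}$, $\Pi_{0}^{*}\Pi_{2}$, $\Pi_{1}^{*}\Pi_{2}$ all vanish, again mode by mode, as scalar triple products with a repeated argument (equivalently, these follow from $Q_{1}=(I-Q_{0}Q_{0}^{*})T_{1}(\cdot)$, from $CQ_{0}=0$ — which holds because $Q_{0}$ spans $\ker(C^{*}C)$, so $(CQ_{0})^{*}(CQ_{0})=0$ — and from $T_{1}^{*}C^{*}T_{1}=0$). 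Hence $[\,\Pi_{1}\ \Pi_{2}\ \Pi_{0}\,]$ has $3n$ orthonormal columns in $\mathbb{C}^{3n}$ and is therefore unitary; stacking the three eigenrelations and right-multiplying by its conjugate transpose gives the first identity in \eqref{eq:eigendecom_CHC}, and the statement for $Q$ follows on conjugating by $I_{3}\otimes T$.

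For $CC^{*}$ I would not repeat the argument but observe instead that $\mathcal{L}^{*}=-\overline{\mathcal{L}}$ (because $\mathcal{L}$ is blockwise skew and its blocks $\Lambda_{\ell}$ are symmetric), whence $\mathcal{L}\mathcal{L}^{*}=\overline{\mathcal{L}^{*}\mathcal{L}}$; equivalently, mode by mode $\mathcal{L}\mathcal{L}^{*}=\|\lambda_{j}\|^{2}I_{3}-\overline{\lambda_{j}}\,\lambda_{j}^{\top}=\overline{M_{j}}$. Conjugating the decomposition just obtained, and using that $\Lambda_{q}$ is real, shows that $[\,\overline{\Pi_{1}}\ \overline{\Pi_{2}}\ \overline{\Pi_{0}}\,]$ is unitary and diagonalizes $\mathcal{L}\mathcal{L}^{*}$ to $\mathrm{diag}(\Lambda_{q},\Lambda_{q},0)$; reordering the first two columns and flipping a sign to obtain $[\,-\overline{\Pi_{2}}\ \overline{\Pi_{1}}\ \overline{\Pi_{0}}\,]$ affects neither unitarity nor — because the first two diagonal blocks coincide — the displayed diagonal matrix. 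Transporting back by $I_{3}\otimes T$ yields $CC^{*}=P\,\mathrm{diag}(\Lambda_{q},\Lambda_{q},0)\,P^{*}$ with $P$ unitary.

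I expect the only genuine difficulty to be bookkeeping: making the reduction ``$\mathcal{L}^{*}\mathcal{L}$ is the direct sum of the $M_{j}$'' precise (tracking the index ordering and checking that every matrix in sight really does have diagonal $n\times n$ blocks), and keeping the conjugations straight in the two identities $\|t\times\lambda\|^{2}=\|t\|^{2}\|\lambda\|^{2}-|t\cdot\lambda|^{2}$ and $\mathcal{L}\mathcal{L}^{*}=\overline{\mathcal{L}^{*}\mathcal{L}}$, the latter being exactly what makes the null spaces (and eigenbases) of $C^{*}C$ and $CC^{*}$ differ by a complex conjugate. Everything else is routine linear algebra once Lemma~\ref{lem:nonsingular_full_rank} supplies $\Lambda_{q}$ positive definite and $\Lambda_{p}$ of full column rank, so that all the inverse square roots above are well defined.
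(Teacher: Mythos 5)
Your argument is correct, and it follows the paper's own route: the paper states Theorem~\ref{thm:eigendecomp_A} as a summary of the construction in \eqref{eq:mtx_Q0}--\eqref{eq:mtx_QdPd}, simply asserting (with a pointer to the earlier double-curl work) the relations you verify, namely $CQ_0=0$, the orthogonality of the three blocks, and $(C^{*}C)Q_i=Q_i\Lambda_q$ for $i=1,2$. What you add is the actual verification, organized as a per-mode reduction: after conjugating by $I_3\otimes T$, the operator splits into the $3\times3$ matrices $\|\lambda_j\|^2I_3-\lambda_j\lambda_j^{*}$, the columns of $\Pi_0,\Pi_1,\Pi_2$ are identified with $\lambda_j$, $\overline{\lambda_j}\times(t\times\lambda_j)$ and $\overline{t\times\lambda_j}$ for $t=(\alpha,\beta,1)^{\top}$, and all orthogonality statements collapse to scalar triple products plus the Lagrange identity; moreover you get the $CC^{*}$ half from the symmetry $\mathcal{L}^{*}=-\overline{\mathcal{L}}$, hence $\mathcal{L}\mathcal{L}^{*}=\overline{\mathcal{L}^{*}\mathcal{L}}$, rather than repeating the paper's parallel construction of $P_0,P_1,P_2$ via $P_1=(I-P_0P_0^{*})T_1(\cdot)$ and $P_2=CT_1(\cdot)$ — this conjugation is precisely why the $P$-blocks in \eqref{eq:mtx_QdPd} are the conjugates of the $Q$-blocks, so the two presentations match. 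One remark worth recording: your norm check for $\Pi_1$ implicitly uses the projector form $Q_1=(I-Q_0Q_0^{*})T_1\left(\Lambda_p^{*}\Lambda_p\Lambda_q^{-1}\right)^{-1/2}$, which is the correct orthonormal choice; the explicit bracket displayed in \eqref{eq:mtx_Pi_1} equals $(I-Q_0Q_0^{*})T_1\,\Lambda_q$ mode by mode, so taken literally with the printed normalizer it would give $\Pi_1^{*}\Pi_1=\Lambda_q^{2}$ (the bracket form would need the normalizer $\left(\Lambda_p^{*}\Lambda_p\Lambda_q\right)^{-1/2}$); your computation silently resolves this inconsistency in the paper's display in the right direction. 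Beyond the bookkeeping you flag yourself, and the standing hypotheses $\alpha\delta_x\neq\beta\delta_y$, $\delta_z\neq\beta\delta_y$ from Lemma~\ref{lem:nonsingular_full_rank} that the theorem inherits implicitly, I see no gap.
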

Motivated by \eqref{eq:eigendecom_CHC}, we derive the left and right
singular vector matrices $P$ and $Q$ for $C$ in the following theorem.
\begin{theorem}[Singular value decomposition of $C$] \label{thm:svd_C}
  Let $\Lambda_{q}$ and $(Q, P)$ be defined in
  \eqref{eq:mtx_Lambda_p_q} and \eqref{eq:mtx_QdPd}. Assume that the
  vector $\k = (\mathrm{k}_1, \mathrm{k}_2, \mathrm{k}_3)^{\top}$ in
  \eqref{eq:quasi_periodic} is nonzero with $0 \leq \mathrm{k}_1,
  \mathrm{k}_2, \mathrm{k}_3 \leq \frac{1}{2}$. Then, the matrix $C$ in
  \eqref{eq:mtx_C} has the SVD 
  \begin{align}
    C = P\, \mbox{\rm diag} \left( \Lambda_{q}^{1/2},
      \Lambda_{q}^{1/2}, 0 \right) Q^{\ast} = P_{r} \Sigma_{r}
    Q_{r}^{\ast},
    \label{eq:svd_C} % = \left( I_3 \otimes T\right) \overline{\Pi_{r}} \Lambda_{r} \Pi_{r}^{\ast} \left( I_3 \otimes T^{\ast} \right), \label{eq:svd_C}
  \end{align}
  where
  \begin{align*}
    P_{r} = [ P_2, \ P_1], \ Q_{r} = [ Q_1, \ Q_2], \ \Sigma_{r} &=
    \,\mbox{\rm diag}\left( \Lambda_{q}^{1/2},
      \Lambda_{q}^{1/2}\right).
  \end{align*}
\end{theorem}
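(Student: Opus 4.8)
The plan is to reduce the asserted factorization to three identities between the already-constructed blocks and verify each from \eqref{eq:mtx_Q0}--\eqref{eq:mtx_QdPd}. Since $Q=[Q_1,Q_2,Q_0]$ and $P=[P_2,P_1,P_0]$ are unitary by Theorem~\ref{thm:eigendecomp_A}, the identity $C=P\,\mathrm{diag}(\Lambda_q^{1/2},\Lambda_q^{1/2},0)Q^{\ast}$ is equivalent to $CQ=P\,\mathrm{diag}(\Lambda_q^{1/2},\Lambda_q^{1/2},0)$, that is, to
\begin{align}
CQ_1=P_2\Lambda_q^{1/2},\qquad CQ_2=P_1\Lambda_q^{1/2},\qquad CQ_0=0. \nonumber
\end{align}
Once these are shown, the last block drops out and one reads off $C=[P_2,P_1]\,\mathrm{diag}(\Lambda_q^{1/2},\Lambda_q^{1/2})\,[Q_1,Q_2]^{\ast}=P_r\Sigma_r Q_r^{\ast}$, which is the compact form. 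The third identity is immediate from Theorem~\ref{thm:eigendecomp_A}: $C^{\ast}CQ_0=0$ gives $(CQ_0)^{\ast}(CQ_0)=Q_0^{\ast}C^{\ast}CQ_0=0$, hence $CQ_0=0$ (and therefore also $Q_0^{\ast}C^{\ast}=0$).

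For the first identity I would use the explicit forms $Q_1=(I-Q_0Q_0^{\ast})T_1\,(\Lambda_p^{\ast}\Lambda_p\Lambda_q^{-1})^{-1/2}$ in \eqref{eq:orth_proj} and $P_2=CT_1\,(\Lambda_p^{\ast}\Lambda_p)^{-1/2}$ in \eqref{eq:mtx_Q2}. Because $CQ_0=0$, pre-multiplying $Q_1$ by $C$ kills the projection term, so $CQ_1=CT_1(\Lambda_p^{\ast}\Lambda_p\Lambda_q^{-1})^{-1/2}=CT_1(\Lambda_p^{\ast}\Lambda_p)^{-1/2}\Lambda_q^{1/2}=P_2\Lambda_q^{1/2}$; here I use that $\Lambda_p^{\ast}\Lambda_p$ and $\Lambda_q$ are commuting positive definite diagonal matrices, which is where Lemma~\ref{lem:nonsingular_full_rank} enters. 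The second identity is the crux. Writing $Q_2=C^{\ast}T_1(\Lambda_p^{\ast}\Lambda_p)^{-1/2}$ from \eqref{eq:mtx_Q2}, it suffices to establish
\begin{align}
CC^{\ast}T_1=(I-P_0P_0^{\ast})T_1\,\Lambda_q, \nonumber
\end{align}
for then $CQ_2=(I-P_0P_0^{\ast})T_1(\Lambda_p^{\ast}\Lambda_p)^{-1/2}\Lambda_q=P_1\Lambda_q^{-1/2}\Lambda_q=P_1\Lambda_q^{1/2}$ by the definition of $P_1$ in \eqref{eq:orth_proj}.

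To prove that last identity I would rely on two facts. (a) $P_2^{\ast}T_1=0$: since $Q$ is unitary, $Q_1^{\ast}Q_2=0$, and using $Q_0^{\ast}C^{\ast}=0$ this reduces to $T_1^{\ast}C^{\ast}T_1=0$, whence $P_2^{\ast}T_1=(\Lambda_p^{\ast}\Lambda_p)^{-1/2}T_1^{\ast}C^{\ast}T_1=0$. (b) $T_1^{\ast}(I-P_0P_0^{\ast})T_1$ commutes with $\Lambda_q$: indeed $T_1^{\ast}T_1$ is a scalar multiple of $I_n$, while $P_0=(I_3\otimes T)\overline{\Pi_0}$ together with \eqref{eq:mtx_Q0} and the definition of $T_1$ give $P_0^{\ast}T_1=\Lambda_q^{-1/2}(\alpha\Lambda_1+\beta\Lambda_2+\Lambda_3)$, so $(P_0^{\ast}T_1)^{\ast}(P_0^{\ast}T_1)$ is a real diagonal matrix; hence $T_1^{\ast}(I-P_0P_0^{\ast})T_1$ is real diagonal and commutes with $\Lambda_q$. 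Granting (a) and (b): Theorem~\ref{thm:eigendecomp_A} gives $CC^{\ast}=P_1\Lambda_q P_1^{\ast}+P_2\Lambda_q P_2^{\ast}$ and $I=P_1P_1^{\ast}+P_2P_2^{\ast}+P_0P_0^{\ast}$; by (a) the $P_2$ terms vanish, leaving $CC^{\ast}T_1=P_1\Lambda_q(P_1^{\ast}T_1)$ and $(I-P_0P_0^{\ast})T_1=P_1(P_1^{\ast}T_1)$, and (b) allows $\Lambda_q$ to be moved past $P_1^{\ast}T_1$, which is exactly $CC^{\ast}T_1=(I-P_0P_0^{\ast})T_1\Lambda_q$. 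The hard part is precisely this step---showing the cross terms between the two $\Lambda_q$-eigenspaces cancel and that the normalizing square roots match up; an equivalent, purely mechanical alternative is to conjugate by $I_3\otimes T$ and verify the block identities $\mathcal L\Pi_0=0$, $\mathcal L\Pi_1=-\overline{\Pi_2}\Lambda_q^{1/2}$ and $\mathcal L\Pi_2=\overline{\Pi_1}\Lambda_q^{1/2}$, where $\mathcal L$ is obtained from $C$ by replacing each $C_\ell$ with $\Lambda_\ell$ (Theorem~\ref{thm:SchurDecomp_Ci}); there the computation is entrywise on diagonals and collapses to elementary $3\times3$ identities plus the same bookkeeping of $(\Lambda_p^{\ast}\Lambda_p)^{-1/2}$ and $\Lambda_q^{\pm1/2}$.
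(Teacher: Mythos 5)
Your proposal is correct, and while it establishes the same factorization it organizes the verification differently from the paper. The paper proves \eqref{eq:svd_C} by computing the middle factor blockwise: it shows $P_2^{\ast}CQ_1=\Lambda_q^{1/2}$, $P_1^{\ast}CQ_2=\Lambda_q^{1/2}$ and $P_1^{\ast}CQ_1=P_2^{\ast}CQ_2=0$ directly from $CT_1=-(I_3\otimes T)\Lambda_p$, $C^{\ast}T_1=(I_3\otimes T)\overline{\Lambda_p}$, $CQ_0=0$, $P_0^{\ast}C=0$, and then reads off $P^{\ast}CQ=\mbox{diag}(\Lambda_q^{1/2},\Lambda_q^{1/2},0)$ using unitarity. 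You instead verify the column relations $CQ_0=0$, $CQ_1=P_2\Lambda_q^{1/2}$, $CQ_2=P_1\Lambda_q^{1/2}$; your treatment of $CQ_1$ is a clean one-liner (arguably simpler than the corresponding inner-product computation in the paper), and for $CQ_2$ you route through $CC^{\ast}T_1=(I-P_0P_0^{\ast})T_1\Lambda_q$ using the eigendecomposition of Theorem~\ref{thm:eigendecomp_A} together with $P_2^{\ast}T_1=0$. Your derivation of $P_2^{\ast}T_1=0$ from $Q_1^{\ast}Q_2=0$ is also a nice touch: the paper obtains the equivalent identity $T_1^{\ast}C^{\ast}T_1=0$ by explicit cancellation of $[\alpha I\ \beta I\ I]\Lambda_p$, whereas you get it for free from unitarity. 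The fallback you mention (conjugating by $I_3\otimes T$ and checking $\mathcal{L}\Pi_0=0$, $\mathcal{L}\Pi_1=-\overline{\Pi_2}\Lambda_q^{1/2}$, $\mathcal{L}\Pi_2=\overline{\Pi_1}\Lambda_q^{1/2}$) is consistent with \eqref{eq:mtx_QdPd} and would also work.

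One small repair is needed in the final step. As labeled, fact (b) --- that $T_1^{\ast}(I-P_0P_0^{\ast})T_1$ commutes with $\Lambda_q$ --- does not by itself let you ``move $\Lambda_q$ past $P_1^{\ast}T_1$'': commutation of the Gram matrix $(P_1^{\ast}T_1)^{\ast}(P_1^{\ast}T_1)$ with $\Lambda_q$ says nothing about $P_1^{\ast}T_1$ itself (for a unitary $M$, $M^{\ast}M=I$ commutes with everything). What you actually need, and what your own computation already supplies, is the stronger statement that $P_1^{\ast}T_1=\left(\Lambda_p^{\ast}\Lambda_p\Lambda_q^{-1}\right)^{-1/2}T_1^{\ast}(I-P_0P_0^{\ast})T_1$ is a product of diagonal matrices: you showed $T_1^{\ast}(I-P_0P_0^{\ast})T_1$ is real diagonal via $P_0^{\ast}T_1=\Lambda_q^{-1/2}(\alpha\Lambda_1+\beta\Lambda_2+\Lambda_3)$ and $T_1^{\ast}T_1=(|\alpha|^2+|\beta|^2+1)I_n$, and the normalizing factor is diagonal by Lemma~\ref{lem:nonsingular_full_rank}. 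Hence $P_1^{\ast}T_1$ is diagonal and commutes with $\Lambda_q$, which closes the argument. With that one sentence added, your proof is complete.
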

\begin{proof}
  From \eqref{eq:eigendecomp_Cis} and the definition of $C$, it
  follows that
  \begin{align}
    C T_1 = - \left( I_3 \otimes T \right) \Lambda_{p}, \quad C^{\ast}
    T_1 = \left( I_3 \otimes T \right) \overline{\Lambda_{p}}, \quad C
    Q_{0} = 0, \mbox{ and } P_{0}^{\ast} C = 0. \label{eq:pf_svd_1}
  \end{align}
  Combining \eqref{eq:pf_svd_1} with \eqref{eq:orth_proj} and
  \eqref{eq:mtx_Q2}, we have
  \begin{align*}
    P_{2}^{\ast} C Q_{1} &=  \left(\Lambda_{p}^{\ast} \Lambda_{p} \right)^{-1/2} T_1^{\ast} C^{\ast} C \left(I - Q_0 Q_0^{\ast} \right) T_1 \left( \Lambda_{p}^{\ast} \Lambda_{p} \Lambda_q^{-1} \right)^{-1/2} \\
    &= \left(\Lambda_{p}^{\ast} \Lambda_{p} \right)^{-1/2} T_1^{\ast} C^{\ast} C T_1 \left( \Lambda_{p}^{\ast} \Lambda_{p} \Lambda_q^{-1} \right)^{-1/2} = \Lambda_{q}^{1/2}, \\
    P_{1}^{\ast} C Q_{1} &= \left( \Lambda_{p}^{\ast} \Lambda_{p} \Lambda_q^{-1} \right)^{-1/2} T_1^{\ast} \left( I - P_{0} P_{0}^{\ast} \right) C \left( I - Q_{0} Q_{0}^{\ast} \right) T_1 \left( \Lambda_{p}^{\ast} \Lambda_{p} \Lambda_q^{-1} \right)^{-1/2} \\
    &= \left( \Lambda_{p}^{\ast} \Lambda_{p} \Lambda_q^{-1} \right)^{-1/2} T_1^{\ast}  C T_1 \left( \Lambda_{p}^{\ast} \Lambda_{p} \Lambda_q^{-1} \right)^{-1/2} \\
    &= - \left( \Lambda_{p}^{\ast} \Lambda_{p} \Lambda_q^{-1} \right)^{-1/2} T_1^{\ast} \left( I_3 \otimes T \right) \Lambda_{p} \left( \Lambda_{p}^{\ast} \Lambda_{p} \Lambda_q^{-1} \right)^{-1/2} \\
    &= - \left( \Lambda_{p}^{\ast} \Lambda_{p} \Lambda_q^{-1} \right)^{-1/2}\left( \begin{bmatrix} \alpha I & \beta I & I \end{bmatrix} \Lambda_{p} \right) \left( \Lambda_{p}^{\ast} \Lambda_{p} \Lambda_q^{-1} \right)^{-1/2} = 0, \\
    P_{1}^{\ast} C Q_2 &= \left( \Lambda_{p}^{\ast} \Lambda_{p} \Lambda_q^{-1} \right)^{-1/2} T_1^{\ast} \left( I - P_{0} P_{0}^{\ast} \right) C C^{\ast} T_1 \left(\Lambda_{p}^{\ast} \Lambda_{p} \right)^{-1/2} \\
    &= \left( \Lambda_{p}^{\ast} \Lambda_{p} \Lambda_q^{-1} \right)^{-1/2} T_1^{\ast}  C C^{\ast} T_1 \left(\Lambda_{p}^{\ast} \Lambda_{p} \right)^{-1/2} = \Lambda_{q}^{1/2}, \\
    P_{2}^{\ast} C Q_{2} &= \left(\Lambda_{p}^{\ast} \Lambda_{p}
    \right)^{-1/2} T_1^{\ast} C^{\ast} C Q_2
    = \left(\Lambda_{p}^{\ast} \Lambda_{p} \right)^{-1/2} T_1^{\ast} Q_2 \Lambda_{q} \\
    &= \left(\Lambda_{p}^{\ast} \Lambda_{p} \right)^{-1/2} T_1^{\ast}
    C^{\ast} T_1 \left(\Lambda_{p}^{\ast} \Lambda_{p} \right)^{-1/2}
    \Lambda_{q} = 0.
  \end{align*}
  Therefore, we obtain the SVD of $C$ described in \eqref{eq:svd_C}.
  % \begin{align*}
  %   \begin{bmatrix} P_2 & P_1 & P_0 \end{bmatrix}^{\ast}
  %   C \begin{bmatrix} Q_1 & Q_2 & Q_{0} \end{bmatrix} = \mbox{diag}
  %   \left( \Lambda_{q}^{1/2},\ \Lambda_{q}^{1/2}, \ 0\right).
  % \end{align*}
\end{proof}

It is worth mentioning the specific choice of the singular vector
matrices $P$ and $Q$ in \eqref{eq:mtx_QdPd}. The choice of these
matrices is not unique because the multiplicities of the nonzero
eigenvalues of $C^{\ast}C$ are even and may be large. We have
carefully chosen the $P$ and $Q$ defined in \eqref{eq:mtx_QdPd} to
avoid the need to store these two matrices and the computations
involving $P$ and $Q$ can be performed efficiently.  We discuss
this computational advantage in Section~\ref{sec:ls_solver}.

% ======================================================================
\section{The null space free eigenvalue problem} 
\label{sec:NFM}
% ======================================================================

% Because $\nabla \times (\varepsilon E) = 0$ and $\nabla \times H =
% 0$, there are many zero eigenvalues in the discrete generalized
% eigenvalue problem \eqref{eq:conti_GEP_EH}.

We have derived the SVD of $C$ in Theorem~\ref{thm:svd_C}.  In this
section, we use the SVD to deflate the null space of the GEP obtained
by discretizing \eqref{eq:conti_GEP_EH} so that we can develop a new
solution process for the target eigenvalue problem.  The
discretization of \eqref{eq:conti_GEP_EH} based on Yee's scheme leads
to the following discrete GEP
\begin{align}
  \begin{bmatrix}
    C & 0 \\ 0 & C^{\ast}
  \end{bmatrix} \begin{bmatrix} E \\ H \end{bmatrix} = \omega \left(
    \imath \begin{bmatrix} \zeta_{d} & \mu_{d} \\ -\varepsilon_{d} &
      -\xi_{d}
    \end{bmatrix} \right) \begin{bmatrix} E \\
    H \end{bmatrix}. \label{eq:discrete_GEP_EH}
\end{align}
Here, $C$ is the discrete single-curl operator defined in
\eqref{eq:mtx_C}. The four $3n \times 3n$ complex matrices
$\zeta_{d}$, $\xi_{d}$, $\varepsilon_{d}$, $\mu_{d}$ are the discrete
counterparts of the matrices $\zeta$, $\xi$, $\varepsilon$ and $\mu$,
respectively.

From Theorem~\ref{thm:svd_C}, we can see that the GEP
\eqref{eq:discrete_GEP_EH} has $2n$ zero eigenvalues.  This null
space not only affects the convergence of iterative eigensolvers but also
increases the challenge of solving the eigenvalue problem. In this
section, we apply the SVD of $C$ in \eqref{eq:svd_C} to reduce the GEP
\eqref{eq:discrete_GEP_EH} to the null space free eigenvalue problem
\eqref{eq:NFSEP_org} equipped with the following two
advantages. (i) The dimension of the coefficient matrix is
dramatically reduced from $6n$ in \eqref{eq:discrete_GEP_EH} to $4n$
in \eqref{eq:NFSEP_org}. (ii) The two eigenvalue problems share
the same $4n$ nonzero eigenvalues.

To derive the NFSEP, we first rewrite \eqref{eq:discrete_GEP_EH} as
\begin{align}
  \mbox{\rm diag} \left( P_{r}, Q_{r} \right)\, \mbox{\rm diag} \left(
    \Sigma_{r}, \Sigma_{r} \right) \, \mbox{\rm diag} \left(
    Q_{r}^{\ast}, P_{r}^{\ast} \right) \begin{bmatrix} E \\
    H \end{bmatrix} = \omega \left( \imath \begin{bmatrix} \zeta_{d} &
      \mu_{d} \\ -\varepsilon_{d} & -\xi_{d}
    \end{bmatrix} \right) \begin{bmatrix} E \\
    H \end{bmatrix} \label{eq:svd_GEP}
\end{align}
by applying the SVD of $C$ described in \eqref{eq:svd_C}.  We can then
explicitly define the invariant subspace of \eqref{eq:discrete_GEP_EH}
in the following theorem using the matrices $P_{r}$, $Q_{r}$, and
$\Sigma_{r}$ defined in Theorem~\ref{thm:svd_C}.
\begin{theorem} \label{thm:nonzero_invariant_subsp}
  Assume the matrix
  \begin{align}
    B \equiv \imath \begin{bmatrix} \zeta_{d} & \mu_{d} \\
      -\varepsilon_{d} & -\xi_{d}
    \end{bmatrix} \label{eq:mtx_B}
  \end{align}
  is nonsingular. Then
  \begin{align*}
    \mbox{\rm span} \left\{ B^{-1} \mbox{diag} \left( P_{r}
        \Sigma_{r}^{\frac{1}{2}}, Q_{r}\Sigma_{r}^{\frac{1}{2}}
      \right) \right\}
  \end{align*}
  is an invariant subspace of \eqref{eq:discrete_GEP_EH} corresponding
  to all nonzero eigenvalues. Furthermore, it holds that
  \begin{align}
    & \left\{ \omega \left|\, \mbox{\rm diag} \left( \Sigma_{r}^{\frac{1}{2}}Q_{r}^{\ast}, \Sigma_{r}^{\frac{1}{2}} P_{r}^{\ast}\right)  B^{-1} \, \mbox{\rm diag} \left( P_{r}\Sigma_{r}^{\frac{1}{2}}, Q_{r} \Sigma_{r}^{\frac{1}{2}}\right)  y =  \omega y \right. \right\} \nonumber \\
    & = \left\{ \omega \left|\, \mbox{\rm diag} \left( C, C^{\ast}
        \right) x = \omega B x, \ \omega \neq 0
      \right. \right\}. \label{eq:nonzero_spectrum}
  \end{align}
\end{theorem}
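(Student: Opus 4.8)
The plan is to fold the singular value decomposition of $C$ into a single intertwining identity for the pencil $(\,\mbox{diag}(C,C^{\ast}),B\,)$, read the claimed invariant subspace off that identity, and then match the nonzero parts of the two spectra through the change of variables $x = Vy$. Write $C = P_{r}\Sigma_{r}Q_{r}^{\ast}$ and $C^{\ast}=Q_{r}\Sigma_{r}P_{r}^{\ast}$ from Theorem~\ref{thm:svd_C}, so that $\mbox{diag}(C,C^{\ast}) = \mbox{diag}(P_{r},Q_{r})\,\mbox{diag}(\Sigma_{r},\Sigma_{r})\,\mbox{diag}(Q_{r}^{\ast},P_{r}^{\ast})$ as in \eqref{eq:svd_GEP}. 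Put $V \equiv B^{-1}\mbox{diag}(P_{r}\Sigma_{r}^{1/2},Q_{r}\Sigma_{r}^{1/2})$ and let $M$ be the $4n\times 4n$ matrix on the left of \eqref{eq:nonzero_spectrum}. Using $P_{r}\Sigma_{r}^{1/2}\cdot\Sigma_{r}^{1/2}Q_{r}^{\ast}=C$ and $Q_{r}\Sigma_{r}^{1/2}\cdot\Sigma_{r}^{1/2}P_{r}^{\ast}=C^{\ast}$, left-multiplying $M$ by $\mbox{diag}(P_{r}\Sigma_{r}^{1/2},Q_{r}\Sigma_{r}^{1/2})$ gives
\begin{align*}
  \mbox{diag}(C,C^{\ast})\,V \;=\; B\,V\,M .
\end{align*}
Since $B$ is invertible while $P_{r},Q_{r}$ have orthonormal columns and $\Sigma_{r}^{1/2}$ is invertible, $V$ has full column rank $4n$; hence $\mbox{span}\{V\}$ is a $4n$-dimensional subspace with $\mbox{diag}(C,C^{\ast})\,\mbox{span}\{V\}\subseteq B\,\mbox{span}\{V\}$, i.e.\ an invariant subspace of the GEP whose restricted action is represented by $M$.

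To establish \eqref{eq:nonzero_spectrum} I would prove two inclusions. If $\mbox{diag}(C,C^{\ast})x=\omega Bx$ with $x\neq0$ and $\omega\neq0$, then $Bx=\omega^{-1}\mbox{diag}(C,C^{\ast})x$ lies in $\mbox{range}(\mbox{diag}(C,C^{\ast}))=\mbox{range}(P_{r})\oplus\mbox{range}(Q_{r})=\mbox{range}(\mbox{diag}(P_{r}\Sigma_{r}^{1/2},Q_{r}\Sigma_{r}^{1/2}))$, so $Bx=\mbox{diag}(P_{r}\Sigma_{r}^{1/2},Q_{r}\Sigma_{r}^{1/2})y$ for some $y\neq0$; then $x=Vy$, and the identity above forces $BV(My-\omega y)=0$, hence $My=\omega y$ with $y\neq0$. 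In particular $\mbox{span}\{V\}$ contains every eigenvector of the GEP belonging to a nonzero eigenvalue. Conversely, if $My=\omega y$ with $y\neq0$, then $x\equiv Vy\neq0$ and $\mbox{diag}(C,C^{\ast})x=BVMy=\omega Bx$, so $\omega$ is an eigenvalue of the GEP.

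The remaining point, which I expect to be the main obstacle, is that every eigenvalue of $M$ is necessarily nonzero, i.e.\ $M$ is nonsingular; this is what makes the left-hand set of \eqref{eq:nonzero_spectrum} coincide with the nonzero eigenvalues (rather than possibly picking up a spurious $0$) and what certifies that $\mbox{span}\{V\}$ is exactly the invariant subspace attached to the nonzero eigenvalues. I would argue by contradiction: if $My=0$ with $y\neq0$, then $x=Vy\neq0$ and $\mbox{diag}(C,C^{\ast})x=0$, so $x$ lies in the null space of $\mbox{diag}(C,C^{\ast})$, which (noting $\mbox{null}(C^{\ast}C)=\mbox{null}(C)$ and $\mbox{null}(CC^{\ast})=\mbox{null}(C^{\ast})$, together with the description of $Q_{0},P_{0}$ in \eqref{eq:mtx_Q0}) equals $\mbox{range}(Q_{0})\oplus\mbox{range}(P_{0})$; write $x=\mbox{diag}(Q_{0},P_{0})z$. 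On the other hand $Bx=\mbox{diag}(P_{r}\Sigma_{r}^{1/2},Q_{r}\Sigma_{r}^{1/2})y\in\mbox{range}(P_{r})\oplus\mbox{range}(Q_{r})$, which is orthogonal to $\mbox{range}(P_{0})\oplus\mbox{range}(Q_{0})$ because $P=[P_{2},P_{1},P_{0}]$ and $Q=[Q_{1},Q_{2},Q_{0}]$ are unitary; hence $\mbox{diag}(P_{0}^{\ast},Q_{0}^{\ast})\,B\,\mbox{diag}(Q_{0},P_{0})\,z=0$. Thus $z=0$ (contradicting $x\neq0$) as soon as the $2n\times 2n$ matrix
\begin{align*}
  \mbox{diag}(P_{0}^{\ast},Q_{0}^{\ast})\,B\,\mbox{diag}(Q_{0},P_{0})
  \;=\; \imath\begin{bmatrix} P_{0}^{\ast}\zeta_{d}Q_{0} & P_{0}^{\ast}\mu_{d}P_{0} \\ -Q_{0}^{\ast}\varepsilon_{d}Q_{0} & -Q_{0}^{\ast}\xi_{d}P_{0} \end{bmatrix}
\end{align*}
is nonsingular. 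I would secure this either by adjoining it as a companion hypothesis to ``$B$ nonsingular'', or, for the physically relevant media, from the Hermitian positive-definiteness of the permittivity and permeability blocks (the structure later exploited in the HHPD reformulation). Granting $M$ nonsingular, the two inclusions above yield \eqref{eq:nonzero_spectrum}, and together with the displayed intertwining identity they give the invariant-subspace assertion.
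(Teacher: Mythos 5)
Your argument is essentially the paper's proof: the paper likewise substitutes the SVD of Theorem~\ref{thm:svd_C} into the pencil, obtains the intertwining identity $\mathrm{diag}(C,C^{\ast})\,V = B\,V\,M$ with $V=B^{-1}\mathrm{diag}(P_{r}\Sigma_{r}^{1/2},Q_{r}\Sigma_{r}^{1/2})$ and $M$ the $4n\times 4n$ matrix on the left of \eqref{eq:nonzero_spectrum}, and reads off the invariant subspace and the spectral identity from it. The one place you diverge is precisely the point you single out as the main obstacle: the paper's proof disposes of it by invoking ``the fact that $M$ is nonsingular'' with no justification, whereas you observe, correctly, that this does not follow from nonsingularity of $B$ alone. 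Your reduction is accurate: $M$ is singular exactly when $B$ maps a nonzero vector of $\mathrm{null}(\mathrm{diag}(C,C^{\ast}))=\mathrm{range}(Q_{0})\times\mathrm{range}(P_{0})$ into $\mathrm{range}(P_{r})\times\mathrm{range}(Q_{r})$, i.e.\ when $\mathrm{diag}(P_{0}^{\ast},Q_{0}^{\ast})\,B\,\mathrm{diag}(Q_{0},P_{0})$ is singular, and this can indeed occur for a contrived nonsingular $B$: take $\zeta_{d}=\xi_{d}=0$, $\mu_{d}=I_{3n}$, and a nonsingular $\varepsilon_{d}$ carrying some nonzero $q_{0}\in\mathrm{range}(Q_{0})$ into $\mathrm{range}(Q_{r})$; then $x=(q_{0}^{\top},0)^{\top}$ satisfies $\mathrm{diag}(C,C^{\ast})x=0$ while $Bx$ lies in the range of $\mathrm{diag}(P_{r}\Sigma_{r}^{1/2},Q_{r}\Sigma_{r}^{1/2})$, and the resulting $y$ gives $My=0$, so $0$ enters the left-hand set of \eqref{eq:nonzero_spectrum} but not the right-hand one. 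So your proposal to adjoin nonsingularity of that $2n\times 2n$ compression as a companion hypothesis, or to derive it from the structure \eqref{eq:assump} (under which it does hold: the computation in Theorem~\ref{thm:NFSEP_real_ew} shows $\imath\begin{bmatrix}0 & I_{3n}\\ -I_{3n} & 0\end{bmatrix}B^{-1}$ is Hermitian positive definite, which forces $A_{r}$, and hence $M$, to be nonsingular), is a genuine sharpening of the published argument rather than a gap on your side; such pathological $B$ are of course not physically meaningful, but they are not excluded by the theorem's stated hypothesis. Apart from how that nonsingularity is secured---which the paper does not address either---your proof and the paper's coincide.
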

\begin{proof}
  From Theorem~\ref{thm:svd_C}, we have
  \begin{align}
    & \mbox{\rm diag} \left( C, C^{\ast} \right) \left\{ B^{-1} \mbox{diag} \left( P_{r} \Sigma_{r}^{\frac{1}{2}}, Q_{r}\Sigma_{r}^{\frac{1}{2}} \right) \right\} \nonumber \\
    =&\ \left\{ \mbox{diag} \left( P_{r} \Sigma_{r}Q_{r}^{\ast}, Q_{r}\Sigma_{r} P_{r}^{\ast}\right)  \right\} \left\{ B^{-1} \mbox{diag} \left( P_{r} \Sigma_{r}^{\frac{1}{2}}, Q_{r}\Sigma_{r}^{\frac{1}{2}} \right) \right\} \nonumber \\
    =&\ B \left\{ B^{-1} \mbox{diag}\left(
        P_{r}\Sigma_{r}^{\frac{1}{2}}, Q_{r}\Sigma_{r}^{\frac{1}{2}}
      \right) \right\} \left\{ \mbox{diag} \left(
        \Sigma_{r}^{\frac{1}{2}}Q_{r}^{\ast}, \Sigma_{r}^{\frac{1}{2}}
        P_{r}^{\ast} \right) B^{-1} \mbox{diag} \left(
        P_{r}\Sigma_{r}^{\frac{1}{2}}, Q_{r} \Sigma_{r}^{\frac{1}{2}}
      \right) \right\}. \label{eq:pf_invariant_1}
  \end{align}
  From \eqref{eq:svd_GEP}, \eqref{eq:pf_invariant_1}, and the fact
  that $\mbox{diag} \left( \Sigma_{r}^{\frac{1}{2}}Q_{r}^{\ast},
    \Sigma_{r}^{\frac{1}{2}} P_{r}^{\ast} \right) B^{-1} \mbox{diag}
  \left( P_{r}\Sigma_{r}^{\frac{1}{2}}, Q_{r} \Sigma_{r}^{\frac{1}{2}}
  \right) \in \mathbb{C}^{4n \times 4n}$ is nonsingular, we can see
  that span$\{ B^{-1} \mbox{diag} \left( P_{r}
    \Sigma_{r}^{\frac{1}{2}}, Q_{r}\Sigma_{r}^{\frac{1}{2}} \right)
  \}$ is an invariant subspace of the GEP \eqref{eq:discrete_GEP_EH}
  corresponding to all nonzero eigenvalues, and therefore the result in
  \eqref{eq:nonzero_spectrum} holds.
\end{proof}

From Theorem~\ref{thm:nonzero_invariant_subsp}, the null space free
eigenvalue problem is derived straightforwardly in the following 
theorem.
\begin{theorem}[The null space free eigenvalue
  problem] \label{thm:nfep} For any nonsingular $B$ defined in
  \eqref{eq:mtx_B}, the GEP \eqref{eq:discrete_GEP_EH} can be reduced
  to the following null space free eigenvalue problem
\begin{align}
  \mbox{\rm diag} \left( \Sigma_{r}^{\frac{1}{2}}Q_{r}^{\ast},
    \Sigma_{r}^{\frac{1}{2}} P_{r}^{\ast} \right) B^{-1} \,\mbox{\rm
    diag} \left( P_{r}\Sigma_{r}^{\frac{1}{2}}, Q_{r}
    \Sigma_{r}^{\frac{1}{2}} \right) y = \omega y. \label{eq:NFSEP_org}
\end{align}
Furthermore, the GEP \eqref{eq:discrete_GEP_EH} and the NFSEP
\eqref{eq:NFSEP_org} have the same nonzero eigenvalues.
\end{theorem}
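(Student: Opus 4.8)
The plan is to treat the statement as a direct corollary of Theorem~\ref{thm:nonzero_invariant_subsp}. First I would use the nonsingularity of $B$ to rewrite the GEP \eqref{eq:discrete_GEP_EH} as the equivalent standard eigenvalue problem $B^{-1}\,\mbox{diag}(C,C^{\ast})\,x=\omega x$; this substitution changes no eigenvalue, zero or nonzero. Next I would record the rank-revealing factorization $\mbox{diag}(C,C^{\ast})=S\widetilde S$ with $S=\mbox{diag}(P_{r}\Sigma_{r}^{1/2},Q_{r}\Sigma_{r}^{1/2})$ and $\widetilde S=\mbox{diag}(\Sigma_{r}^{1/2}Q_{r}^{\ast},\Sigma_{r}^{1/2}P_{r}^{\ast})$, which is immediate from the SVD $C=P_{r}\Sigma_{r}Q_{r}^{\ast}$ of Theorem~\ref{thm:svd_C} together with $\Sigma_{r}=\Sigma_{r}^{1/2}\Sigma_{r}^{1/2}$. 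In this notation the SEP matrix is $M:=(B^{-1}S)\widetilde S$ and the left-hand side of \eqref{eq:NFSEP_org} is exactly $N:=\widetilde S\,(B^{-1}S)$.

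The heart of the argument is then the classical observation that $XY$ and $YX$ share the same nonzero eigenvalues together with their algebraic multiplicities, applied with $X=B^{-1}S$ and $Y=\widetilde S$. For the set equality it suffices to note that if $\omega\neq0$ and $XYx=\omega x$ with $x\neq0$, then $y:=Yx\neq0$ (otherwise $\omega x=XYx=0$) and $YXy=Y(\omega x)=\omega y$, and conversely by symmetry; this is precisely the equality \eqref{eq:nonzero_spectrum} already established in Theorem~\ref{thm:nonzero_invariant_subsp}, so in the write-up I would simply cite that theorem rather than repeat the computation. For the statement about multiplicities one invokes the Sylvester-type determinant identity $\det(\lambda I_{6n}-XY)=\lambda^{2n}\det(\lambda I_{4n}-YX)$, which together with the fact (noted just before the theorem) that \eqref{eq:discrete_GEP_EH} has exactly $2n$ zero eigenvalues shows that the $4n$ eigenvalues of $N$ are exactly the nonzero eigenvalues of the GEP. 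Moreover Theorem~\ref{thm:nonzero_invariant_subsp} exhibits $\mbox{span}\{B^{-1}S\}$ as the corresponding invariant subspace, which is what makes the reduction constructive rather than merely spectral.

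I do not expect a genuine obstacle, since the substance is carried by Theorems~\ref{thm:svd_C} and~\ref{thm:nonzero_invariant_subsp}. The only points that need a little care are the rank bookkeeping that makes the equivalences above valid: $S$ has full column rank $4n$ because $P_{r},Q_{r}$ have orthonormal columns and $\Sigma_{r}^{1/2}$ is nonsingular (Lemma~\ref{lem:nonsingular_full_rank} guarantees that $\Lambda_{q}$ is positive definite), and $B^{-1}$ is nonsingular, so $Sy=0$ forces $y=0$ and $x=B^{-1}Sy=0$ forces $y=0$. With these in hand, passing from \eqref{eq:discrete_GEP_EH} to \eqref{eq:NFSEP_org} loses only the $2n$ deflated zero eigenvalues while preserving the nonzero spectrum with multiplicity, which is the content of the theorem.
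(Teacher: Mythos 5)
Your proposal is correct and takes essentially the same route as the paper: the paper obtains Theorem~\ref{thm:nfep} directly from Theorem~\ref{thm:nonzero_invariant_subsp}, i.e.\ from the SVD factorization $\mbox{diag}(C,C^{\ast})=\mbox{diag}\left(P_{r}\Sigma_{r}^{1/2},Q_{r}\Sigma_{r}^{1/2}\right)\mbox{diag}\left(\Sigma_{r}^{1/2}Q_{r}^{\ast},\Sigma_{r}^{1/2}P_{r}^{\ast}\right)$ and the equality of nonzero spectra in \eqref{eq:nonzero_spectrum}, exactly as you do. Your extra Sylvester-determinant bookkeeping of algebraic multiplicities and the full-column-rank remarks are harmless refinements of what the paper leaves implicit.
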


We have reduced the GEP \eqref{eq:discrete_GEP_EH} to the NFSEP
\eqref{eq:NFSEP_org}, and the NFSEP can be solved by the iterative
eigensolvers without being disturbed by the null space. Further
computational considerations are discussed in the next section.
Finally, we note how the $\mathbb{C}^{3n \times 3n}$ matrices
$\zeta_{d}$, $\xi_{d}$, $\varepsilon_{d}$, and $\mu_{d}$ are determined.  In
Yee's scheme, $E$ and $H$ are evaluated at the edge centers and the
face centers, respectively. However, $CE$ and $C^{\ast} H$ are evaluated at the face
centers and the edge centers, respectively. To match these evaluation points, we can
average the corresponding discrete entry values of $\zeta$, $\xi$,
$\varepsilon$ and $\mu$ on the neighbor grid points to form the
matrices $\zeta_{d}$, $\xi_{d}$, $\varepsilon_{d}$, $\mu_{d}$ in
\eqref{eq:discrete_GEP_EH}.  Consequently, $\zeta_{d} E + \mu_{d} H$
and $\varepsilon_{d} E + \xi_{d} H$ are evaluated at the face centers and the
edge centers, respectively.

% ======================================================================
\section{Computational and application considerations}
\label{sec:comp_app}
% ======================================================================

The NFSEP defined in \eqref{eq:NFSEP_org} can actually be applied to
various complex media settings as long as the corresponding matrix $B$
is nonsingular. Such media include general and Tellegen biisotropic
media \cite{WangZhouKoscKafeSouk_09}, lossless and reciprocal
bianisotropic media \cite{SerdSemcTretSihv_01}, and general
bianisotropic media \cite{Kamentskii_98}. To solve the NFSEP,
shift-and-invert type iterative eigensolvers (e.g., Arnoldi method,
Jacobi-Davidson method) can be applied to compute the desired
eigenpairs of \eqref{eq:discrete_GEP_EH} from \eqref{eq:NFSEP_org}
without being affected by zero eigenvalues.  Despite the wide
applications on complex media, the process for solving the NFSEP
\eqref{eq:NFSEP_org} can be can further accelerated under the mild
assumption described in Section~\ref{sec:hpd}.  It is worth mentioning
that two important types of media, i.e., chiral media
\cite{Chern_2013_Opt,ChernChang_2013_ApplPhys,LiuZhang_11,SihvViitLindTret_book_94,WangZhouKoscKafeSouk_09,ZhaoKoscSouk_10}
and pseudochiral media
\cite{Chern_2013_PhysD,ChernChang_2013_Opt,ChernChang_2013_OptSoc,Kamentskii_98,TretSihvSochSimo_98},
satisfy this assumption and can thus be solved by the accelerated
eigensolvers. See Sections~\ref{sec:ls_solver} and
\ref{sec:app_remarks} for more details and some computational remarks.

% ======================================================================
\subsection{Sufficient conditions for Hermitian and Hermitian positive
  definite generalized eigenvalue problems}
\label{sec:hpd}
% ======================================================================

The coefficient matrix in the NFSEP \eqref{eq:NFSEP_org} is in a
general form, and the NFSEP can therefore be solved using, for example, the Arnoldi
method. However, under an assumption, we can rewrite the NFSEP
\eqref{eq:NFSEP_org} as a generalized eigenvalue problem with
Hermitian and Hermitian positive definite coefficient matrices.  We
can then take advantage of the matrix structure to accelerate the
solution process by solving this rewritten eigenvalue problem via the
invert Lanczos method and the associated conjugate gradient linear
system solver.

The acceleration scheme is motivated from the following observations
regarding the matrix $B$ in \eqref{eq:mtx_B}.  If $\mu_{d}$ is
nonsingular and we let $\Phi$ represent the matrix $\varepsilon_{d} - \xi_{d}
\mu_{d}^{-1} \zeta_{d}$, we have
\begin{align*}
  B = \imath \begin{bmatrix} 0 & \mu_{d} \\ -\Phi & -\xi_{d}
  \end{bmatrix} \begin{bmatrix} I_{3n} & 0 \\ \mu_{d}^{-1} \zeta_{d}
    & I_{3n}
  \end{bmatrix}.
\end{align*}
Furthermore, if $\Phi$ is nonsingular, we have
\begin{align}
  B^{-1} = - \imath \begin{bmatrix} I_{3n} & 0 \\
    -\mu_{d}^{-1}\zeta_{d} & I_{3n}
  \end{bmatrix} \begin{bmatrix} -\Phi^{-1} \xi_{d}\mu_{d}^{-1} &
    -\Phi^{-1} \\ \mu_{d}^{-1} & 0
  \end{bmatrix}. \label{eq:B_inv}
\end{align}
In other words, the properties of $B^{-1}$ are closely related to
$\mu_{d}$, $\Phi$, $\xi_{d}$ and $\zeta_{d}$.  In particular, we
consider the assumption
\begin{equation}
  \label{eq:assump}
  \mu_{d} \succ 0, \Phi \equiv \varepsilon_{d} - \xi_{d} \mu_{d}^{-1} \zeta_{d}
  \succ 0, \mbox{ and } \xi_{d}^{\ast} = \zeta_{d}.
\end{equation}
The notations $\mu_{d} \succ 0$ and $\Phi \succ 0$ suggest that
$\mu_{d}$ and $\Phi$ are Hermitian positive definite.  Under this
assumption, we show that the GEP \eqref{eq:discrete_GEP_EH} can be
transformed to a standard Hermitian eigenvalue problem (so that all of the
eigenvalues are real) in Theorem~\ref{lem:skew-symm_A}.  We then
rewrite the NFSEP \eqref{eq:NFSEP_org} in the new form
\eqref{eq:NFSEP_spd_1} in Theorem~\ref{thm:NFSEP_real_ew}. Consequently,
the corresponding coefficient matrix $A_{r}$ to be defined in
\eqref{eq:factor_mtx} is Hermitian and positive definite. We can then
use the Lanczos method, which consumes less storage and computation than
Arnoldi-type methods, to solve \eqref{eq:NFSEP_spd_1}.

We begin the derivation from the following theorem.
\begin{theorem} \label{lem:skew-symm_A} Under
  Assumption~\eqref{eq:assump}, all eigenvalues $\omega$ of the GEP
  \eqref{eq:discrete_GEP_EH} are real.
\end{theorem}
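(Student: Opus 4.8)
The plan is to prove that, after left-multiplying both sides of \eqref{eq:discrete_GEP_EH} by a fixed nonsingular matrix, the pencil becomes $(\mathcal{A},\mathcal{E})$ with $\mathcal{A}$ skew-Hermitian and $\mathcal{E}$ Hermitian positive definite; the standard eigenvalue problem attached to such a pencil has only purely imaginary eigenvalues, which forces $\omega\in\mathbb{R}$.

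First I would left-multiply both sides of \eqref{eq:discrete_GEP_EH} by the nonsingular matrix $\left[\begin{smallmatrix} 0 & -I_{3n}\\ I_{3n} & 0\end{smallmatrix}\right]$. This leaves the eigenvalues $\omega$ unchanged and transforms \eqref{eq:discrete_GEP_EH} into the equivalent problem $\mathcal{A}x=\imath\omega\,\mathcal{E}x$, where
\[
  \mathcal{A}=\begin{bmatrix} 0 & -C^{\ast}\\ C & 0\end{bmatrix},\qquad
  \mathcal{E}=\begin{bmatrix}\varepsilon_d & \xi_d\\ \zeta_d & \mu_d\end{bmatrix}
\]
(the discrete analogue of the symmetrized first-order Maxwell system with constitutive matrix $\mathcal{E}$). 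A one-line computation gives $\mathcal{A}^{\ast}=-\mathcal{A}$, so $\mathcal{A}$ is skew-Hermitian, independently of any property of $C$.

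Next I would verify, using Assumption~\eqref{eq:assump}, that $\mathcal{E}$ is Hermitian positive definite. The relation $\xi_d^{\ast}=\zeta_d$ makes the two off-diagonal blocks Hermitian conjugates of each other, and $\mu_d=\mu_d^{\ast}$ since $\mu_d\succ0$. Although $\varepsilon_d=\varepsilon_d^{\ast}$ is not assumed outright, it follows from $\varepsilon_d=\Phi+\xi_d\mu_d^{-1}\xi_d^{\ast}$ once one observes that $\Phi$ is Hermitian (in fact $\Phi\succ0$ by assumption) and that $\xi_d\mu_d^{-1}\xi_d^{\ast}$ is Hermitian; hence $\mathcal{E}$ is Hermitian. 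For positive definiteness I would invoke the Schur-complement criterion with pivot block $\mu_d$: a Hermitian block matrix of this shape is positive definite if and only if $\mu_d\succ0$ and its Schur complement $\varepsilon_d-\xi_d\mu_d^{-1}\zeta_d=\Phi$ is positive definite, both of which hold by \eqref{eq:assump}. (This incidentally re-confirms that $B$ in \eqref{eq:mtx_B} is nonsingular, so \eqref{eq:discrete_GEP_EH} has only finite eigenvalues.)

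Finally, substituting $x=\mathcal{E}^{-1/2}y$ and left-multiplying by $\mathcal{E}^{-1/2}$ converts $\mathcal{A}x=\imath\omega\,\mathcal{E}x$ into the standard eigenvalue problem $\bigl(\mathcal{E}^{-1/2}\mathcal{A}\,\mathcal{E}^{-1/2}\bigr)y=\imath\omega\,y$. Because $\mathcal{E}^{-1/2}$ is Hermitian, $\mathcal{E}^{-1/2}\mathcal{A}\,\mathcal{E}^{-1/2}$ is again skew-Hermitian, so each of its eigenvalues is purely imaginary; hence $\imath\omega\in\imath\mathbb{R}$, i.e.\ $\omega\in\mathbb{R}$, for every eigenvalue of \eqref{eq:discrete_GEP_EH}. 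I expect the only step requiring real care to be the middle one---noticing that $\varepsilon_d$ is \emph{forced} to be Hermitian under \eqref{eq:assump} and then applying the Schur-complement characterization correctly; the reordering at the start and the similarity argument at the end are routine.
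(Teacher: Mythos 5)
Your argument is correct, and it reaches the conclusion by a genuinely different route than the paper. The paper proves the theorem constructively: it eliminates the magnetoelectric blocks with the two block-triangular changes of variables built from $\mu_{d}^{-1}\zeta_{d}$ and $\xi_{d}\mu_{d}^{-1}$, takes Cholesky factors $\mu_{c}$, $\Phi_{c}$ of $\mu_{d}$ and $\Phi$, and arrives at an explicit standard Hermitian eigenproblem $Ax=\omega x$ with $A$ given in \eqref{eq:mtx_A}; the payoff of that extra work is the HSEP \eqref{eq:HSEP} itself, which the paper reuses later (e.g.\ in Table~\ref{tab:summary}). You instead symmetrize the pencil in one stroke: left-multiplying \eqref{eq:discrete_GEP_EH} by $\left[\begin{smallmatrix}0&-I_{3n}\\ I_{3n}&0\end{smallmatrix}\right]$ turns $B$ of \eqref{eq:mtx_B} into $\imath\mathcal{E}$ with $\mathcal{E}=\left[\begin{smallmatrix}\varepsilon_d&\xi_d\\ \zeta_d&\mu_d\end{smallmatrix}\right]$, you correctly note that \eqref{eq:assump} forces $\varepsilon_d=\Phi+\xi_d\mu_d^{-1}\xi_d^{\ast}$ to be Hermitian, the Schur-complement test (pivot $\mu_d$, complement $\Phi$) gives $\mathcal{E}\succ0$, and the congruence by $\mathcal{E}^{-1/2}$ reduces the problem to a skew-Hermitian matrix whose eigenvalues $\imath\omega$ are purely imaginary, so $\omega\in\mathbb{R}$. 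All steps check out, including the incidental conclusion that $B$ is nonsingular, so no infinite eigenvalues arise. Your approach is shorter and more structural — it exhibits the physically meaningful fact that the constitutive matrix $\mathcal{E}$ is positive definite, and it is in fact the mirror image of what the paper does later in Theorem~\ref{thm:NFSEP_real_ew}, where $\imath\left[\begin{smallmatrix}0&I_{3n}\\ -I_{3n}&0\end{smallmatrix}\right]B^{-1}$ is shown Hermitian positive definite — whereas the paper's longer computation buys the explicit Hermitian operator $A$ that it needs elsewhere. The one point deserving care in your write-up, as you note, is the Hermitian-ness of $\varepsilon_d$, which is not assumed but deduced; you handle it correctly.
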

\begin{proof}
  Let
  \begin{align}
    \begin{bmatrix}
      E_{\zeta} \\ H_{\zeta}
    \end{bmatrix} = \begin{bmatrix} I_{3n} & 0 \\
      \mu_{d}^{-1}\zeta_{d} & I_{3n}
    \end{bmatrix} \begin{bmatrix} E \\ H
    \end{bmatrix}. \label{eq:sub_1}
  \end{align}
  Substituting \eqref{eq:sub_1} into \eqref{eq:discrete_GEP_EH} and
  pre-multiplying \eqref{eq:discrete_GEP_EH} by $\begin{bmatrix}
    I_{3n} & 0 \\ \xi_{d}\mu_{d}^{-1} & I_{3n} \end{bmatrix}$, it
  holds that
  \begin{align}
    \begin{bmatrix}
      C & 0 \\ \xi_{d} \mu_{d}^{-1}C-C^{\ast} \mu_{d}^{-1}\zeta_{d} &
      C^{\ast}
    \end{bmatrix} \begin{bmatrix} E_{\zeta} \\ H_{\zeta}
    \end{bmatrix} = \imath \omega \begin{bmatrix} 0 & \mu_{d} \\ -\Phi
      & 0
    \end{bmatrix} \begin{bmatrix} E_{\zeta} \\ H_{\zeta}
    \end{bmatrix}, \label{eq:discrete_GEP_EH_2}
  \end{align}
  where $\Phi$ is defined in Assumption~\eqref{eq:assump}.
  By the assumptions that $\mu_{d} \succ 0$ and $\Phi \succ 0$, we then let
  \begin{align}
    \mu_{d} = \mu_{c} \mu_{c}^{\ast}, \quad \Phi = \Phi_{c}
    \Phi_{c}^{\ast} \label{eq:Phi_12}
  \end{align}
  be the Cholesky decompositions of $\mu_{d}$ and $\Phi$, 
respectively.  Define
  \begin{align}
    \begin{bmatrix}
      \widetilde{E} \\ \widetilde{H}
    \end{bmatrix} = \begin{bmatrix} \Phi_{c}^{\ast} & 0 \\ 0 &
      \mu_{c}^{\ast}
    \end{bmatrix} \begin{bmatrix} E_{\zeta} \\ H_{\zeta}
    \end{bmatrix}. \label{eq:sub_2}
  \end{align}
  Substituting \eqref{eq:sub_2} into \eqref{eq:discrete_GEP_EH_2} and
  pre-multiplying \eqref{eq:discrete_GEP_EH_2} by $\begin{bmatrix} 0 &
    -\Phi^{-1}_{c} \\ \mu_{c}^{-1} & 0 \end{bmatrix}$, we have 
  \begin{equation}
    \label{eq:HSEP}
    A x = \omega x,
  \end{equation}
  where $x = [ \widetilde{E}^{\top}, \ \widetilde{H}^{\top} ]^{\top}$
  and
  \begin{align}
    A = (-\imath) \begin{bmatrix}
      - \Phi^{-1}_{c} \left( \xi_{d} \mu_{d}^{-1} C-C^{\ast} \mu_{d}^{-1} \zeta_{d}  \right) \left(\Phi_{c}^{\ast} \right)^{-1} & - \Phi^{-1}_{c} C^{\ast} \left(\mu_{c}^{\ast} \right)^{-1} \\
      \mu_{c}^{-1} C\left(\Phi_{c}^{\ast} \right)^{-1} & 0
    \end{bmatrix}. \label{eq:mtx_A}
  \end{align}
  Because $A$ is Hermitian, all eigenvalues $\omega$ of the GEP
  \eqref{eq:discrete_GEP_EH} are real.
\end{proof}

We have shown that all eigenvalues of \eqref{eq:discrete_GEP_EH} are
real under Assumption~\eqref{eq:assump}. However, the coefficient
matrix of the NFSEP in \eqref{eq:NFSEP_org} is not Hermitian. We
reformulate the NFSEP \eqref{eq:NFSEP_org} in the following theorem to
obtain a Hermitian and Hermitian positive definite generalized
eigenvalue problem (HHPD-GEP).
\begin{theorem} \label{thm:NFSEP_real_ew} Under
  Assumption~\eqref{eq:assump}, the GEP \eqref{eq:discrete_GEP_EH} can
  be reduced to a null space free generalized eigenvalue problem
  \begin{align}
    \left( \imath \begin{bmatrix} 0 & \Sigma_{r}^{-1} \\
        -\Sigma_{r}^{-1} & 0 \end{bmatrix} \right) y_{r} = \omega^{-1}
    A_{r} y_{r}, \label{eq:NFSEP_spd_1}
  \end{align}
  where
  \begin{align}
    A_{r} \equiv \,\mbox{\rm diag} \left( P_{r}^{\ast}, Q_{r}^{\ast}
    \right) \begin{bmatrix} \mu_{d}^{-1}\zeta_{d} & -I_{3n} \\ I_{3n}
      & 0
    \end{bmatrix} \begin{bmatrix} \Phi^{-1} & 0 \\ 0 & \mu_{d}^{-1}
    \end{bmatrix} \begin{bmatrix} \xi_{d}\mu_{d}^{-1} & I_{3n} \\
      -I_{3n} & 0
    \end{bmatrix} \,\mbox{\rm diag} \left( P_{r}, Q_{r}
    \right) \label{eq:factor_mtx}
  \end{align}
  is Hermitian and positive definite.
\end{theorem}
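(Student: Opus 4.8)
The plan is to follow the same change-of-variables chain used in the proof of Theorem~\ref{lem:skew-symm_A}, but now carried out on the \emph{reduced} (null space free) coordinates supplied by the SVD of $C$. Recall from \eqref{eq:svd_C} that $C = P_r \Sigma_r Q_r^{\ast}$ and $C^{\ast} = Q_r \Sigma_r P_r^{\ast}$, with $\Sigma_r$ Hermitian positive definite (it is $\mathrm{diag}(\Lambda_q^{1/2},\Lambda_q^{1/2})$ and $\Lambda_q \succ 0$ by Lemma~\ref{lem:nonsingular_full_rank}). From the factorization \eqref{eq:B_inv} of $B^{-1}$ under Assumption~\eqref{eq:assump}, the NFSEP coefficient matrix in \eqref{eq:NFSEP_org} is
\begin{align*}
  M \equiv \mbox{\rm diag}\left(\Sigma_r^{1/2}Q_r^{\ast},\Sigma_r^{1/2}P_r^{\ast}\right) B^{-1}\,\mbox{\rm diag}\left(P_r\Sigma_r^{1/2},Q_r\Sigma_r^{1/2}\right).
\end{align*}
First I would substitute \eqref{eq:B_inv} into $M$ and pull the outer $\Sigma_r^{1/2}$ factors apart, writing $M = \mbox{\rm diag}(\Sigma_r^{1/2},\Sigma_r^{1/2})\,N\,\mbox{\rm diag}(\Sigma_r^{1/2},\Sigma_r^{1/2})$ where $N = \mbox{\rm diag}(Q_r^{\ast},P_r^{\ast}) B^{-1}\mbox{\rm diag}(P_r,Q_r)$. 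Recognizing the middle three-factor product in \eqref{eq:B_inv} as exactly $-\imath$ times the product $\begin{bmatrix}\mu_d^{-1}\zeta_d & -I\\ I & 0\end{bmatrix}\begin{bmatrix}\Phi^{-1}&0\\0&\mu_d^{-1}\end{bmatrix}\begin{bmatrix}\xi_d\mu_d^{-1}&I\\-I&0\end{bmatrix}$ sandwiched by $\mbox{\rm diag}(P_r^{\ast},Q_r^{\ast})\,(\cdot)\,\mbox{\rm diag}(P_r,Q_r)$ gives precisely $-\imath A_r$ as defined in \eqref{eq:factor_mtx}; so $N = -\imath A_r$, hence $M = \mbox{\rm diag}(\Sigma_r^{1/2},\Sigma_r^{1/2})(-\imath A_r)\mbox{\rm diag}(\Sigma_r^{1/2},\Sigma_r^{1/2})$.

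Next, starting from the NFSEP $M y = \omega y$, I would left-multiply by $\mbox{\rm diag}(\Sigma_r^{-1/2},\Sigma_r^{-1/2})$ and set $y_r = \mbox{\rm diag}(\Sigma_r^{1/2},\Sigma_r^{1/2}) y$, turning it into $(-\imath A_r)\,\mbox{\rm diag}(\Sigma_r^{-1},\Sigma_r^{-1})\,y_r = \omega^{-1}\,\mbox{\rm... wait, it is cleaner to divide by }\omega$: from $M y = \omega y$ we get $-\imath A_r\,\mbox{\rm diag}(\Sigma_r^{1/2},\Sigma_r^{1/2})y = \omega\,\mbox{\rm diag}(\Sigma_r^{-1/2},\Sigma_r^{-1/2})y$, i.e. $-\imath A_r y_r = \omega\,\mbox{\rm diag}(\Sigma_r^{-1},\Sigma_r^{-1})y_r$; rearranging to isolate $A_r$ on the right gives $\mbox{\rm diag}(\Sigma_r^{-1},\Sigma_r^{-1})y_r = -\imath\,\omega^{-1}A_r y_r$, and I would then absorb the block sign pattern so the left factor becomes the skew matrix $\imath\begin{bmatrix}0&\Sigma_r^{-1}\\-\Sigma_r^{-1}&0\end{bmatrix}$. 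Concretely, conjugating by $\mbox{\rm diag}(I,-\imath I)$ (or inserting the appropriate $\pm\imath$ rescaling of the $H$-block of $y_r$) converts $-\imath\,\mbox{\rm diag}(\Sigma_r^{-1},\Sigma_r^{-1})$ into $\imath\begin{bmatrix}0&\Sigma_r^{-1}\\-\Sigma_r^{-1}&0\end{bmatrix}$ while leaving $A_r$ Hermitian; this yields exactly \eqref{eq:NFSEP_spd_1}. Throughout, the equivalence of spectra with the GEP \eqref{eq:discrete_GEP_EH} is inherited from Theorem~\ref{thm:nfep}, since every transformation here is a congruence/similarity on the already-deflated problem.

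It remains to prove the two structural claims on $A_r$. Hermiticity: since $\xi_d^{\ast}=\zeta_d$ and $\mu_d,\Phi$ are Hermitian, the block $\begin{bmatrix}\xi_d\mu_d^{-1}&I\\-I&0\end{bmatrix}$ is the conjugate transpose of $\begin{bmatrix}\mu_d^{-1}\zeta_d&-I\\I&0\end{bmatrix}$ (using $(\mu_d^{-1}\zeta_d)^{\ast}=\xi_d\mu_d^{-1}$), and the middle block $\mbox{\rm diag}(\Phi^{-1},\mu_d^{-1})$ is Hermitian; hence $A_r = V^{\ast}\,\mbox{\rm diag}(\Phi^{-1},\mu_d^{-1})\,V$ with $V = \begin{bmatrix}\xi_d\mu_d^{-1}&I\\-I&0\end{bmatrix}\mbox{\rm diag}(P_r,Q_r)$, which is manifestly Hermitian. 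Positive definiteness then follows once I show $V$ has full column rank: the $2\times2$ block matrix $\begin{bmatrix}\xi_d\mu_d^{-1}&I\\-I&0\end{bmatrix}$ is invertible (its determinant block structure gives inverse $\begin{bmatrix}0&-I\\I&\xi_d\mu_d^{-1}\end{bmatrix}$), and $\mbox{\rm diag}(P_r,Q_r)$ has full column rank because $P_r,Q_r$ are the (orthonormal) restrictions of the unitary matrices $P,Q$ to their range-space columns. Since $\mbox{\rm diag}(\Phi^{-1},\mu_d^{-1})\succ0$, we conclude $A_r\succ0$. The main obstacle is purely bookkeeping: matching the exact sign and $\imath$ placements so that the reduced pencil comes out as the skew matrix $\imath\begin{bmatrix}0&\Sigma_r^{-1}\\-\Sigma_r^{-1}&0\end{bmatrix}$ on the left and the Hermitian positive definite $A_r$ on the right, rather than some sign-variant; there is no analytic difficulty beyond carefully tracking the block transposes in $V^{\ast}\,\mbox{\rm diag}(\Phi^{-1},\mu_d^{-1})\,V$ and the unitarity of $P$ and $Q$ from Theorem~\ref{thm:eigendecomp_A}.
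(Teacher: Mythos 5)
Your structural analysis of $A_{r}$ (Hermiticity from $\xi_{d}^{\ast}=\zeta_{d}$ and $A_{r}=V^{\ast}\,\mbox{diag}(\Phi^{-1},\mu_{d}^{-1})\,V$ with $V$ of full column rank, hence $A_{r}\succ 0$) is correct and is essentially the paper's argument made a bit more explicit. However, the reduction to \eqref{eq:NFSEP_spd_1} contains a genuine error. The identity you rely on, $N\equiv \mbox{diag}(Q_{r}^{\ast},P_{r}^{\ast})\,B^{-1}\,\mbox{diag}(P_{r},Q_{r})=-\imath A_{r}$, is false. What \eqref{eq:B_inv} actually gives is
\begin{align*}
  \imath \begin{bmatrix} 0 & I_{3n} \\ -I_{3n} & 0 \end{bmatrix} B^{-1}
  = \begin{bmatrix} \mu_{d}^{-1}\zeta_{d} & -I_{3n} \\ I_{3n} & 0 \end{bmatrix}
    \begin{bmatrix} \Phi^{-1} & 0 \\ 0 & \mu_{d}^{-1} \end{bmatrix}
    \begin{bmatrix} \xi_{d}\mu_{d}^{-1} & I_{3n} \\ -I_{3n} & 0 \end{bmatrix},
\end{align*}
i.e.\ the Hermitian positive definite product equals $\imath$ times the block-swapped $B^{-1}$, not $\imath B^{-1}$ itself; the swap $\left[\begin{smallmatrix}0&I\\-I&0\end{smallmatrix}\right]$ cannot be dropped. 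Since $\mbox{diag}(P_{r}^{\ast},Q_{r}^{\ast})\left[\begin{smallmatrix}0&I_{3n}\\-I_{3n}&0\end{smallmatrix}\right]=\left[\begin{smallmatrix}0&I_{2n}\\-I_{2n}&0\end{smallmatrix}\right]\mbox{diag}(Q_{r}^{\ast},P_{r}^{\ast})$, the correct relation is $\left[\begin{smallmatrix}0&I_{2n}\\-I_{2n}&0\end{smallmatrix}\right]N=-\imath A_{r}$, not $N=-\imath A_{r}$. A quick sanity check exposes the error: if $N=-\imath A_{r}$ held, the NFSEP matrix $M=\mbox{diag}(\Sigma_{r}^{1/2},\Sigma_{r}^{1/2})\,N\,\mbox{diag}(\Sigma_{r}^{1/2},\Sigma_{r}^{1/2})$ would be skew-Hermitian and all nonzero eigenvalues of the GEP purely imaginary, contradicting Theorem~\ref{lem:skew-symm_A}.

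The attempted repair at the end — ``conjugating by $\mbox{diag}(I,-\imath I)$'' to convert $-\imath\,\mbox{diag}(\Sigma_{r}^{-1},\Sigma_{r}^{-1})$ into $\imath\left[\begin{smallmatrix}0&\Sigma_{r}^{-1}\\-\Sigma_{r}^{-1}&0\end{smallmatrix}\right]$ — cannot work: $\mbox{diag}(I,-\imath I)$ commutes with $\mbox{diag}(\Sigma_{r}^{-1},\Sigma_{r}^{-1})$, and no block-diagonal phase rescaling turns a block-diagonal skew-Hermitian matrix into a block-anti-diagonal Hermitian one (nor would it leave the other side's structure intact). The paper's proof avoids this entirely: after the substitution $y_{r}=\mbox{diag}(\Sigma_{r}^{1/2},\Sigma_{r}^{1/2})y$, which yields $N y_{r}=\omega\,\mbox{diag}(\Sigma_{r}^{-1},\Sigma_{r}^{-1})y_{r}$, it left-multiplies the whole equation by $\imath\left[\begin{smallmatrix}0&I_{2n}\\-I_{2n}&0\end{smallmatrix}\right]$ — a nonsingular transformation of the equations, hence spectrum-preserving. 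This one step simultaneously turns the right-hand side into $\omega\,\imath\left[\begin{smallmatrix}0&\Sigma_{r}^{-1}\\-\Sigma_{r}^{-1}&0\end{smallmatrix}\right]y_{r}$ and, via the displayed identity, makes the left coefficient exactly $A_{r}$; dividing by $\omega\neq 0$ (guaranteed by Theorem~\ref{thm:nfep}) gives \eqref{eq:NFSEP_spd_1}. With that correction inserted in place of your $N=-\imath A_{r}$ step, the rest of your argument stands.
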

\begin{proof}
  Let
  \begin{align*}
    y_{r} = \mbox{diag} \left( \Sigma_{r}^{1/2}, \Sigma_{r}^{1/2}
    \right) y.
  \end{align*}
  Rewrite \eqref{eq:NFSEP_org} as
  \begin{align*}
    \mbox{diag} \left( Q_{r}^{\ast}, P_{r}^{\ast} \right) B^{-1}
    \mbox{diag} \left( P_{r}, Q_{r} \right) y_{r} = \omega\,
    \mbox{diag} \left( \Sigma_{r}^{-1}, \Sigma_{r}^{-1} \right) y_{r},
  \end{align*}
  which is equivalent to
  \begin{align}
    \imath \, \mbox{diag} \left( P_{r}^{\ast}, Q_{r}^{\ast}
    \right) \begin{bmatrix} 0 & I_{3n} \\ -I_{3n} & 0 \end{bmatrix}
    B^{-1} \mbox{diag} \left( P_{r}, Q_{r} \right) y_{r} = \omega
    \left( \imath \begin{bmatrix} 0 & \Sigma_{r}^{-1} \\
        -\Sigma_{r}^{-1} & 0 \end{bmatrix} \right)
    y_{r}. \label{eq:NFSEP_spd}
  \end{align}
  From \eqref{eq:B_inv}, it holds that
  \begin{align*}
    \imath \begin{bmatrix} 0 & I_{3n} \\ -I_{3n} & 0 \end{bmatrix}
    B^{-1} &= \begin{bmatrix} \mu_{d}^{-1} \zeta_{d} & -I_{3n} \\
      I_{3n} & 0
    \end{bmatrix} \begin{bmatrix} \Phi^{-1} \xi_{d}\mu_{d}^{-1} &
      \Phi^{-1} \\ -\mu_{d}^{-1} & 0
    \end{bmatrix} \\
    &= \begin{bmatrix} \mu_{d}^{-1}\zeta_{d} & -I_{3n} \\ I_{3n} & 0
    \end{bmatrix} \begin{bmatrix} \Phi^{-1} & 0 \\ 0 & \mu_{d}^{-1}
    \end{bmatrix} \begin{bmatrix} \xi_{d}\mu_{d}^{-1} & I_{3n} \\
      -I_{3n} & 0
    \end{bmatrix},
  \end{align*}
  which is Hermitian and positive definite if
  Assumption~\eqref{eq:assump} holds. That is, the coefficient matrix
  on the left hand side of \eqref{eq:NFSEP_spd} is equal to $A_{r}$.
  Therefore, the equation \eqref{eq:NFSEP_spd} can be rewritten as
  \eqref{eq:NFSEP_spd_1}.
\end{proof}

We have now asserted the sufficient conditions that lead to the Hermitian
and Hermitian positive definite null space free generalized eigenvalue
problem (HHPD-NFGEP) \eqref{eq:NFSEP_spd_1}. Next, we discuss some
considerations in applying and solving the HHPD-NFGEP.

% ======================================================================
\subsection{The eigenvalue and associated linear system solvers}
\label{sec:ls_solver}
% ======================================================================

In the HHPD-NFGEP \eqref{eq:NFSEP_spd_1}, the coefficient matrix
$A_{r}$ is Hermitian and positive definite. We can use the generalized
Lanczos method to solve \eqref{eq:NFSEP_spd_1} and obtain the
smallest positive eigenvalues that are of interest in complex
media. In each step of the generalized Lanczos method, we must
solve the linear systems
\begin{align}
  A_{r} u \equiv \begin{bmatrix} P_{r}^{\ast} & \\ &
    Q_{r}^{\ast} \end{bmatrix} \begin{bmatrix} \zeta_{d} & -I_{3n} \\
    I_{3n} & 0
  \end{bmatrix} \begin{bmatrix} \Phi^{-1} & 0 \\ 0 & I_{3n}
  \end{bmatrix} \begin{bmatrix} \zeta_{d}^{\ast} & I_{3n} \\ -I_{3n} &
    0
  \end{bmatrix} \begin{bmatrix} P_{r} & \\ & Q_{r} \end{bmatrix} u =
  b \label{eq:spd_LS}
\end{align}
for a given vector $b$.  Because $A_{r}$ is Hermitian positive
definite, the linear system \eqref{eq:spd_LS} can be solved efficiently by using the
conjugate gradient method.  Furthermore, the matrix-vector
multiplications of the forms $(T^{\ast} \p, T\q)$ for computing
$(P_{r}^{\ast}\hat{\p}_1, Q_{r}^{\ast} \hat{\p}_2)$ and
$(P_{r}\hat{\q}_1, Q_{r}\hat{\q}_2)$, which are the most costly parts
of solving \eqref{eq:spd_LS}, can be computed efficiently by the
three-dimensional FFT because of the periodicity of $T$, as shown in
\eqref{eq:mtx_T}.

% ======================================================================
\subsection{Application remarks}
\label{sec:app_remarks}
% ======================================================================

Intensive research has been conducted on chiral and
pseudochiral media. In this section, we assert that the corresponding
magnetoelectric matrices satisfy the assumptions given in \eqref{eq:assump}, and we can therefore solve the HHPD-NFGEP using the Lanczos
method. This solution procedure can thus act as a useful numerical
tool for simulating three-dimensional chiral and pseudochiral
media.

First, we introduce the magnetoelectric matrices in chiral and
pseudochiral media.  For chiral media (also called Pasteur or
reciprocal chiral media), the associated magnetoelectric matrices
$\zeta_{d}$ and $\xi_{d}$ in \eqref{eq:discrete_GEP_EH} are of the
forms
\begin{align}
  \xi_{d} = \imath \gamma \tilde{I}_{3n} \mbox{ and } \zeta_{d} =
  -\imath \gamma \tilde{I}_{3n}.
  \label{eq:chiral_para_mtx}
\end{align}
Here, $\gamma$ is the chirality parameter, and $\tilde{I}_{m} \in
\mathbb{R}^{m \times m}$ is a diagonal matrix whose entries are
equal to $0$ (outside of the medium) or $1$ (within the medium)
depending on the corresponding grid point locations.  For 
pseudochiral media, the associated matrices are
\begin{align}
 \xi_{d} = \begin{bmatrix} 0 & 0 & \imath \gamma \tilde{I}_{n} \\
    0 & 0 & 0 \\ \imath \gamma \tilde{I}_{n} & 0 & 0
  \end{bmatrix}
\mbox{ and }
  \zeta_{d} = \begin{bmatrix} 0 & 0 & -\imath \gamma \tilde{I}_{n} \\
    0 & 0 & 0 \\ -\imath \gamma \tilde{I}_{n} & 0 & 0
  \end{bmatrix}.
\label{eq:pseudo_chiral_para_mtx}
\end{align}

Now, we analyze these magnetoelectric matrices.  For chiral and
pseudochiral media, $\varepsilon$ are of the form of $3$-by-$3$
diagonal block matrix that $\varepsilon = \mbox{diag}(\epsilon,
\epsilon, \epsilon)$. Here, $\epsilon$ is a piecewise constant
function that is equal to $\varepsilon_{i}$ and $\varepsilon_{o}$ inside and outside the medium, respectively. Thus, the
associated matrix $\varepsilon_{d}$ in \eqref{eq:discrete_GEP_EH} is a
diagonal matrix with $\varepsilon_{o}$ or $\varepsilon_{i}$ on
diagonal entries. On the other hand, the permeability $\mu$ is usually
taken as $I_3$, so the matrix $\mu_{d}$ in
\eqref{eq:discrete_GEP_EH} is equal to an identity. Combining the
diagonal matrices $\varepsilon_{d}$ and $\mu_{d}$ with $(\xi_{d}, \zeta_{d})$
in \eqref{eq:chiral_para_mtx} and \eqref{eq:pseudo_chiral_para_mtx},
we have that $\Phi = \varepsilon_{d} - \xi_{d} \mu_{d}^{-1}\zeta_{d} = \varepsilon_{d} -
\xi_{d}\zeta_{d}$ is a diagonal matrix with the entries 
$\varepsilon_{o}$, $\varepsilon_{i}$, or $\varepsilon_{i} -
\gamma^2$. Because $\varepsilon_{o}$ and $\varepsilon_{i}$ are positive,
$\Phi$ is a positive diagonal matrix, provided $\gamma \in
(0,\sqrt{\varepsilon_{i}})$.

% ======================================================================
\subsection{A short summary}
\label{sec:comp_app_summary}
% ======================================================================

In Table~\ref{tab:summary}, we summarize all of the eigenvalue
problems and eigensolver strategies that have been discussed in
the previous sections. From the algorithmic viewpoint, we propose and
outline the Null Space Free method (NSF) in
Algorithm~\ref{alg:NullSpaceFree}.
 
% ======================================================================
\begin{table}
  \centering
\begin{tabular}[t]{lllll}
\hline
(a) & GEP \eqref{eq:conti_GEP_EH} & NFSEP \eqref{eq:NFSEP_org} & 
HSEP \eqref{eq:HSEP} & HHPD-NFGEP \eqref{thm:NFSEP_real_ew} \\ \hline %\hline
(b) & Generalized     & Standard         & Standard        & Generalized         \\
    & non-Hermition   & non-Hermition    & Hermition       & Hermition \& HPD    \\ \hline
(c) & Complex         & Complex          & Real (Thm.~\ref{lem:skew-symm_A}) & Real \\ \hline
(d) & $6n\times 6n$   & $4n\times 4n$    & $6n\times 6n$   & $4n \times 4n$      \\ \hline %\hline 
(e) & $2n$ (Thm.~\ref{thm:svd_C}) & $0$ (Thm.~\ref{thm:nfep})  & $2n$  & $0$     \\ \hline %\hline
(f) & S.I. Arnoldi    & S.I. Arnoldi     & S.I. Lanczos    & S.I. Lanczos \\ \hline %\hline
(g) & Hard to choose  & Zero             & Hard to choose  & Zero         \\ \hline %\hline
(h) & \eqref{eq:shift_invert_LS} & -  
    & - & \eqref{eq:spd_LS} \\ \hline
(i) & LU or GMRES     & -           & - & CG with FFT\\
    & (not efficient) &                  &   & Mtx-Vec Mult. \\ \hline
(j) & Hard to find    & None (well-cond.)& Harder to find  & None (well-cond.) \\ \hline %\hline
(k) & Media without   & Media without    & Media without   & Media satisfying    \\ 
    & Assum.~\eqref{eq:assump} & Assum.~\eqref{eq:assump} 
    & Assum.~\eqref{eq:assump} & Assum.~\eqref{eq:assump} \\
    &                 &                  &                 & (Thm.~\ref{thm:NFSEP_real_ew}) \\ \hline
\end{tabular}
\caption{A summary of the eigenvalue problems and solvers considered
  in this article. The table lists (a) names of the eigenvalue problems, 
  (b) type of the eigenvalue problems,  (c) type of eigenvalues, 
  (d) problem dimensions, (e) number of zero eigenvalues,
  (f) eigenvalue solver, (g) choice of shift $\sigma$ for the smallest eigenvalues,
  (h) embedded linear systems, (i) linear system solvers,
  (j) preconditioners for the linear system solvers,
  and (k) the applicable complex media.}
  \label{tab:summary}
\end{table}
% ======================================================================

% ======================================================================
\begin{algorithm}
  \begin{algorithmic}[1]
    \STATE Compute $\Lambda_1$, $\Lambda_2$ and $\Lambda_3$ in Theorem
    \ref{thm:SchurDecomp_Ci}; \STATE Compute $\Lambda_{q}$ and
    $\Lambda_{p}$ in \eqref{eq:mtx_Lambda_p_q}; \STATE Compute $\Pi_1$
    and $\Pi_2$ in \eqref{eq:mtx_Pi_1} and \eqref{eq:mtx_Pi_2},
    respectively; \IF {$\mu_{d}$ and $\Phi$ are Hermitian positive
      definite and $\xi_{d}^{\ast} = \zeta_{d}$} \STATE Solve the
    HHPD-NFGEP
    \begin{align*}
      \left( \imath \begin{bmatrix} 0 & \Sigma_{r}^{-1} \\
          -\Sigma_{r}^{-1} & 0 \end{bmatrix} \right) y = \omega^{-1}
      A_{r} y,
    \end{align*}
    where $A_{r}$ is defined in \eqref{eq:factor_mtx}; \ELSE \STATE
    Solve the NFSEP
    \begin{align*}
      \mbox{diag} \left( \Sigma_{r}^{1/2}Q_{r}^{\ast},
        \Sigma_{r}^{1/2}P_{r}^{\ast} \right) B^{-1} \mbox{diag} \left(
        P_{r}\Sigma_{r}^{1/2}, Q_{r}\Sigma_{r}^{1/2} \right) y =
      \omega y.
    \end{align*}
    \STATE Update $y = \mbox{diag} \left( \Sigma_{r}^{1/2},
      \Sigma_{r}^{1/2} \right) y$;
    \ENDIF
    \STATE Compute
    \begin{align*}
      x = B^{-1} \mbox{diag} \left( P_{r}, Q_{r} \right) y.
    \end{align*}
  \end{algorithmic}
  \caption{The null space free method (NSF) for solving the GEP
    \eqref{eq:discrete_GEP_EH}}
  \label{alg:NullSpaceFree}
\end{algorithm}
% ======================================================================

% ======================================================================
\section{Numerical results} 
\label{sec:numerical}
% ======================================================================

In the numerical experiments, we consider three-dimensional reciprocal chiral materials, in which
$\zeta = -\imath \gamma$. The goals of
our numerical experiments are threefold: to verify the correctness of
the proposed algorithms and the code implementation, to compare the
performance of the proposed algorithm with an existed algorithm, and
to study the performance of the proposed method in terms of iteration
numbers and execution time. The details of the numerical experiments are
as follows.

\begin{figure}
  \begin{center}
    {\includegraphics[height=3in]{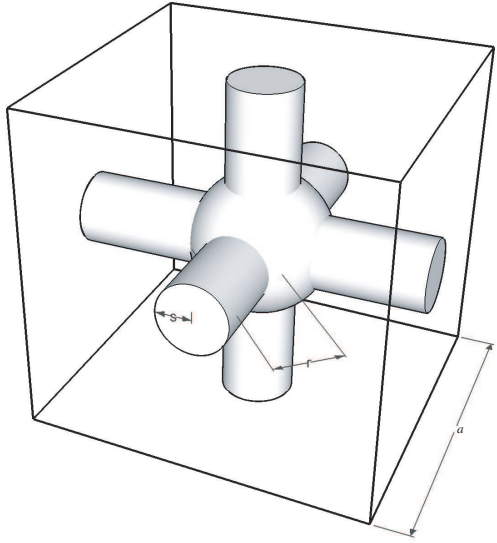}}
  \end{center}
  \caption{A schema of three-dimensional chiral medium with a simple
    cubic lattice within a single primitive cell.}
  \label{fig:schema}
\end{figure}

For the medium structure, we consider a simple cubic lattice consisting of spheres with radius $r$ and circular cylinders with radius
$s$, as shown in Figure~\ref{fig:schema}. In particular, we assume the lattice constant $a=1$, $r/a = 0.345$, and $s/a = 0.11$. We use the
triplet $(\varepsilon_{i}, \varepsilon_{o}, \gamma)$ to represent the
associated permittivity inside the structure, the permittivity outside the structure, and the
chirality parameter, respectively.  The perimeter of the irreducible Brillouin zone
for the sample cubic lattice is formed by the corners ${G} =
[0,0,0]^{\top}$, ${X} = \frac{2\pi}{a} \left[\frac{1}{2},0,
  0\right]^{\top}$, ${M} = \frac{2\pi}{a} \left[\frac{1}{2},
  \frac{1}{2},0 \right]^{\top}$, and ${R} = \frac{2\pi}{a}
\left[\frac{1}{2}, \frac{1}{2}, \frac{1}{2}\right]^{\top}$.

For the implementation, the MATLAB function {\tt eigs} is used to
solve the HHPD-NFGEP \eqref{eq:NFSEP_spd_1}, and {\tt pcg} (without
preconditioning) is used to solve the associated linear system
\eqref{eq:spd_LS}.  The stopping criteria for {\tt eigs} and {\tt pcg}
are $10^{4} \times \epsilon / (2 \sqrt{\delta_x^{-2} + \delta_y^{-2} +
  \delta_z^{-2}})$ and $10^{-14}$, respectively.  The constant
$\epsilon$ ($\approx 2.2 \times 10^{-16}$) is the floating-point
relative accuracy in MATLAB. In {\tt eigs}, the maximal number of
Lanczos vectors for the restart is $3\ell$, where $\ell =10$ is the
number of desired eigenvalues of the GEP \eqref{eq:discrete_GEP_EH}.
The MATLAB functions {\tt fftn} and {\tt ifftn} are applied to compute
the matrix-vector products $T^{\ast} \p$ and $T \q$, respectively.  The
MATLAB commands {\tt tic} and {\tt toc} are used to measure the
elapsed time. All computations are performed in MATLAB 2011b.

For the hardware configuration, we use a HP workstation 
equipped with two Intel Quad-Core Xeon X5687 3.6GHz CPUs, 48 GB of
main memory, and RedHat Linux operating system Version 5.

% ======================================================================
\subsection{Numerical correctness validation}
% ======================================================================

% ======================================================================
\begin{figure}
  \begin{center}
    \begin{tabular}{ccc}
      \subfigure[$(\varepsilon_i, \varepsilon_o,\gamma) = ( 13, 1, 0 )$ 
      \label{fig:band_structure_SC}]{\includegraphics[height=1.9in]{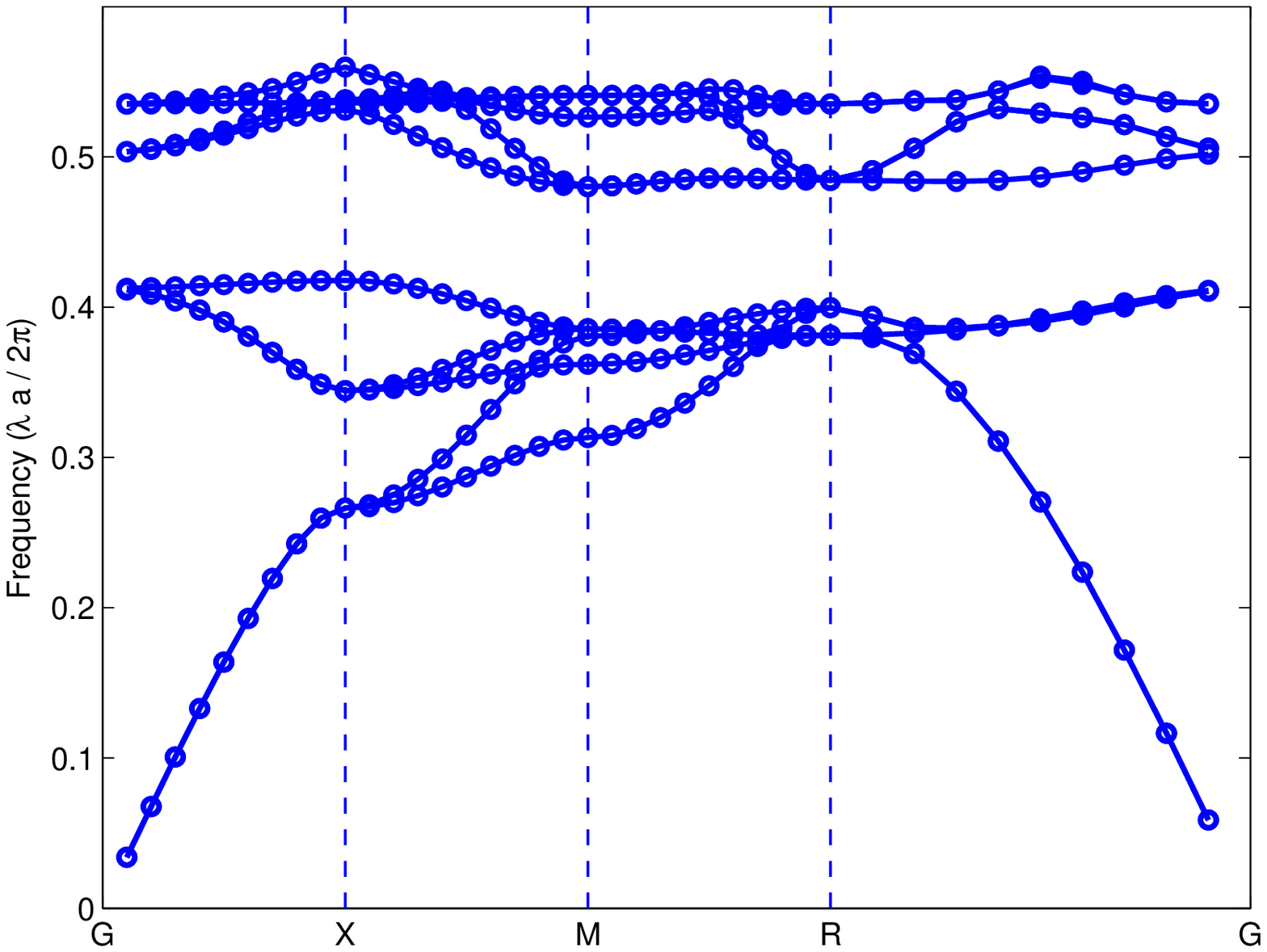}}
      &
      \subfigure[$(\varepsilon_i, \varepsilon_o,\gamma) = ( 1, 1, 0 )$ \label{fig:band_structure_Air}]{\includegraphics[height=1.9in]{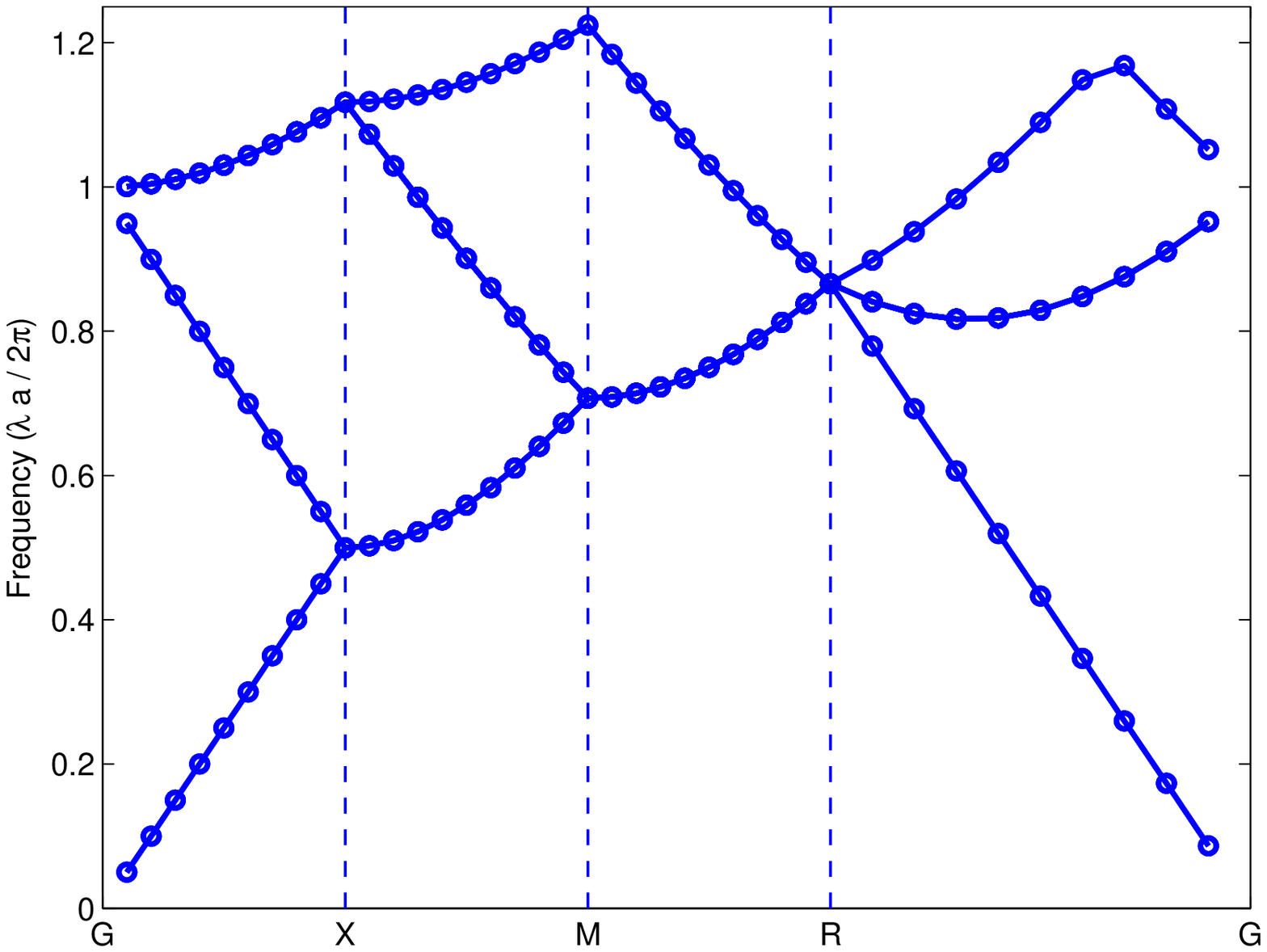}}  
    \end{tabular}
    \caption{The band structure with $(\varepsilon_i,
      \varepsilon_o,\gamma) = ( 13, 1, 0 )$ and $( 1, 1, 0 )$.}
    \label{fig:band_structure_SC_air}
  \end{center}
\end{figure}

% ======================================================================
\begin{figure}
  \begin{center}
    \begin{tabular}{ccc}
      \subfigure[$(\varepsilon_{i},\varepsilon_{o}, \gamma) = (13, 1, 0.5)$
\label{fig:diff_mesh_rp5_e13}]{\includegraphics[height=1.9in]{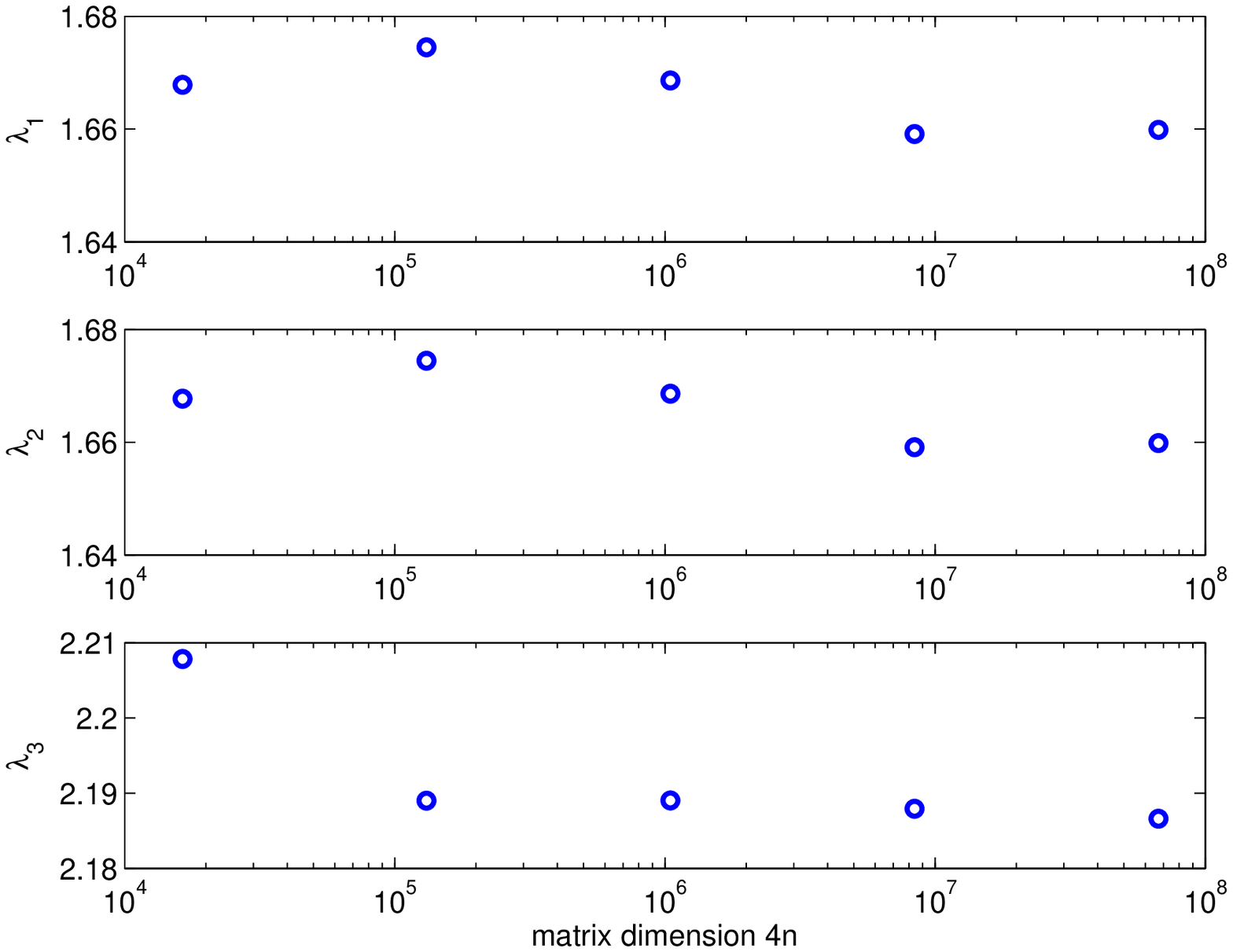}} &
\subfigure[$(\varepsilon_{i},\varepsilon_{o}, \gamma) = (1, 1, 0.8)$
\label{fig:diff_mesh_rp8_e1}]{\includegraphics[height=1.9in]{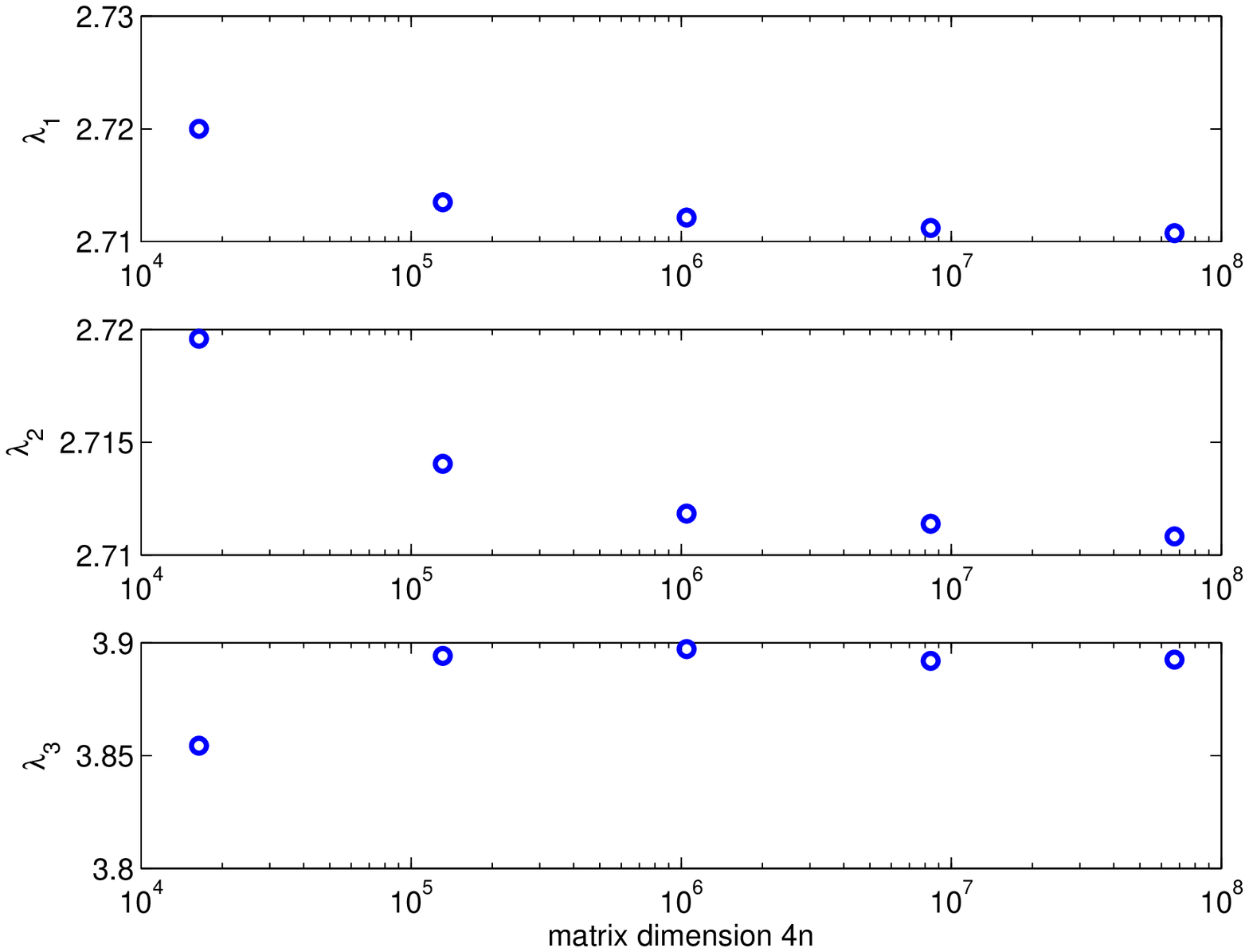}}  
    \end{tabular}
    \caption{The convergent eigenvalues for
      $(\varepsilon_{i},\varepsilon_{o}, \gamma) = (13, 1, 0.5)$ and
      $(\varepsilon_{i},\varepsilon_{o}, \gamma) = (1, 1, 0.8)$ with
      various matrix sizes $4 n$. } 
    \label{fig:diff_mesh}
  \end{center}
\end{figure}
% ======================================================================

We validate the correctness of the proposed algorithm and MATLAB
implementation by solving the following three sets of benchmark
problems.

First, we consider a special case $(\varepsilon_i, \varepsilon_o,
\gamma) = ( 13, 1, 0 )$, whose corresponding band structure has been
reported in \cite{HuangKuoWang_13}. In this case, because of
$\gamma=0$ and \eqref{eq:QEP_E}, we can see that the GEP
\eqref{eq:discrete_GEP_EH} and the eigenvalue problem $ A E = \omega^2
\varepsilon_{d} E$ (for the photonic crystal as shown in
\cite{HuangKuoWang_13}) lead to the same band structure. The
computed band structure due to \eqref{eq:discrete_GEP_EH} and $n_1 =
n_2 = n_3 = 50$ is shown in Figure~\ref{fig:band_structure_SC}. The
figure is identical (up to numerical precision) to Figure~1(a) in
\cite{HuangKuoWang_13}.

Second, we consider $(\varepsilon_i, \varepsilon_o, \gamma) = ( 1, 1,
0 )$, for which some theoretical results are known.  In this case, we know
that \eqref{eq:discrete_GEP_EH} and $ A E = \omega^2 E$ have the same
band structure and, from Theorem~\ref{thm:eigendecomp_A}, $\{
\Lambda_{q}, \Lambda_{q} \}$ are the nonzero eigenvalues of $A$. That
is, $\{ \Lambda_{q}^{1/2}, \Lambda_{q}^{1/2} \}$ are the eigenvalues of
\eqref{eq:discrete_GEP_EH}. Comparing the computed eigenvalues shown
in Figure~\ref{fig:band_structure_Air} (for $n_1 = n_2 = n_3 = 50$)
with the exact eigenvalues, our numerical results show that the
maximal relative error of all computed eigenvalues in the figure is
$3.65 \times 10^{-14}$.

Third, we check the convergence of the eigenvalues in terms of the
grid point numbers. In particular, we set $n_1 = n_2 = n_3 = 2^{k}$
for $k = 3, \ldots, 7$, and the corresponding matrix sizes $4n=4\times
2^{3k}$ of the NFSEP \eqref{eq:NFSEP_spd_1} range from $2,048$ to
$8,388,608$.  The three smallest positive eigenvalues
$\lambda_{1,k}$, $\lambda_{2,k}$ and $\lambda_{3,k}$ for
$(\varepsilon_{i}, \varepsilon_{o}, \gamma ) = (13, 1, 0.5)$ and
$(\varepsilon_{i}, \varepsilon_{o}, \gamma ) = (1, 1, 0.8)$ are shown
in Figure~\ref{fig:diff_mesh} for the wave vector $\k = [0.5, 0,
0]^{\top}$. The figure shows that $\{ \lambda_{1,k} \}$, $\{
\lambda_{2,k} \}$ and $\{ \lambda_{3,k} \}$ are convergent as $k$
increases.

% ======================================================================
\subsection{Comparison with the shift-and-invert Arnoldi method}
% ======================================================================

% ======================================================================
\begin{figure}
  \begin{center}
    {\includegraphics[height=2.5in]{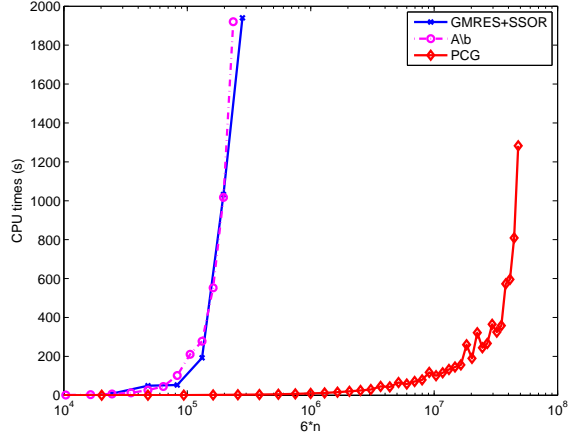}}
    \caption{The time for solving \eqref{eq:shift_invert_LS} by left
      matrix divide and the {\tt gmres} and solving \eqref{eq:spd_LS}
      by {\tt pcg}.}
    \label{fig:CPUtime_cmp_LS_solvers}
  \end{center}
\end{figure}
% ======================================================================

% ======================================================================
\begin{table}
  \centering
  \begin{tabular}{crrrrllll}
    \hline
    $\k$    & $(\frac{1}{4},0,0)$
    & $(\frac{1}{2},\frac{1}{2},0)$
    & $(\frac{1}{2},\frac{1}{2},\frac{1}{2})$ & $(\frac{1}{4},\frac{1}{4},\frac{1}{4})$ \\ \hline
    S.I. Arnoldi+LU          & $18,821$
    & $10,533$   & $16,628$ & $20,758$\\
    Algorithm~\ref{alg:NullSpaceFree}  & $155$
    & $140$    & $198$ & $155$ \\ \hline 
  \end{tabular}
  \caption{CPU time in seconds for solving the GEP 
    \eqref{eq:discrete_GEP_EH}.  S.I. Arnoldi+LU stands for the shift-and-invert 
    Arnoldi method with $LU$ based linear system solver. We take
    $n_1=n_2=n_3 = 32$ and 
    $(\varepsilon_{i}, \varepsilon_{o}, \gamma) = (13, 1, 0.5)$.}
  \label{tab:nfep_eigensolver}
\end{table}
% ======================================================================

The GEP \eqref{eq:discrete_GEP_EH} can be solved using the shift-and-invert
Arnoldi method. In the shift-and-invert Arnoldi method, the
computational cost is dominated by solving the $6n \times 6n$ linear
system
\begin{align}
  \left( \begin{bmatrix} C & 0 \\ 0 & C^{\ast}
    \end{bmatrix} - \sigma \begin{bmatrix} \zeta_{d} & I_{3n} \\
      -\varepsilon_{d} & -\xi_{d}
    \end{bmatrix} \right) y = b,
  \label{eq:shift_invert_LS}
\end{align}
for a certain vector $b$ and a shift $\sigma$. In contrast, the
performance of the null space free method
(Algorithm~\ref{alg:NullSpaceFree}) is dominated by solving the $4n
\times 4n$ linear system \eqref{eq:spd_LS}. 

We thus compare the performance for solving these two linear systems. Here, we take $(\varepsilon_{i}, \varepsilon_{o}, \gamma) =
(13, 1, 1)$ and $\k = [ 0.5, 0.5, 0]^{\top}$.  To solve
\eqref{eq:shift_invert_LS}, we use (i) a direct method based on $LU$
factorization and (ii) the GMRES with SSOR preconditioner. To solve
\eqref{eq:spd_LS}, we use the MATLAB {\tt pcg} without
preconditioning. Note that the chirality parameter $\gamma = 1$
implies that the coefficient matrix is Hermitian and positive definite.
The timing results for solving \eqref{eq:shift_invert_LS} and
\eqref{eq:spd_LS} are shown in
Figure~\ref{fig:CPUtime_cmp_LS_solvers}.  The results suggest that the
performance of the {\tt pcg} for solving \eqref{eq:spd_LS} outperforms
the two solvers for solving \eqref{eq:shift_invert_LS}
remarkably.

In Table~\ref{tab:nfep_eigensolver}, we further compare the
performance of the two eigenvalue solvers: (i) the shift-and-invert
Arnoldi method with direct linear system solver and (ii) the
null space free method (Algorithm~\ref{alg:NullSpaceFree}) with {\tt
  pcg}. It is clear that Algorithm~\ref{alg:NullSpaceFree} is much
faster.  Consequently, we do not recommend solving the GEP
\eqref{eq:discrete_GEP_EH} by the shift-and-invert Arnoldi method
unless we can develop a good preconditioning scheme for solving the
linear system \eqref{eq:shift_invert_LS}.  However, it is important to
note that, even with a good preconditioner, the effect of a large
null space (with rank $2n$) can downgrade the performance
significantly \cite{HuangHsiehLinWang_13_RePort}.

% ======================================================================
\subsection{Performance of Algorithm~\ref{alg:NullSpaceFree}}
% ======================================================================

We now concentrate on the performance, in terms of iteration numbers
and timing, of the null space free method
(Algorithm~\ref{alg:NullSpaceFree}) in finding the $10$ smallest positive
eigenvalues of the GEP \eqref{eq:discrete_GEP_EH} with various
combinations of the parameters $\varepsilon_{i}$, $\gamma$, and
$\k$. We take $n_1 = n_2 = n_3 = 128$, and the size of the coefficient
matrix in \eqref{eq:NFSEP_spd_1} is $4 n_1^3 = 8,388,608$. The Lanczos
method is applied to solve the HHPD-NFGEP \eqref{eq:NFSEP_spd_1}.

In the first test problem set, we vary the wave vector $2\pi \k$
along the segments connecting $G$, $X$, $M$, $R$, and $G$ in the first
Brillouin zone to plot the band structure. In each of the segments,
ten uniformly distributed sampling wave vectors are chosen. The
results are shown in Figure~\ref{fig:FCC_rrimitive_cell} for
$(\varepsilon_{i}, \varepsilon_{o}, \gamma) = (13,1,0.5)$ or
$(\varepsilon_{i}, \varepsilon_{o}, \gamma) = (1,1,0.8)$. 
In the second test problem set, we change the chirality parameter
$\gamma$ from $0.25$ to $3$ for $\varepsilon_{i} = 13$ (from
$\gamma=0.05$ to $0.9$ for $\varepsilon_{i} = 1$).  Note that the
condition number of the linear system \eqref{eq:spd_LS} increases from
$\frac{\varepsilon_{i}}{\varepsilon_{o}}=13$ to $\infty$ (singular) as
$\gamma$ varies from $0$ to $\sqrt{\varepsilon_{i}}\approx 3.61$. We
fix $\k = [0.5, 0, 0]^{\top}$. The results are shown in
Figure~\ref{fig:band_gamma}.  Based on
Figures~\ref{fig:FCC_rrimitive_cell} and \ref{fig:band_gamma}, we
highlight the following performance results.
\begin{itemize}
\item {\bf The iteration numbers are very small.}
  Figures~\ref{fig:iter_Krylov_eps13_k}, \ref{fig:iter_Krylov_eps1_k}
  \ref{fig:iter_Krylov_eps13_gamma}, and
  \ref{fig:iter_Krylov_eps1_gamma} show the iteration numbers of the
  Lanczos method for solving \eqref{eq:NFSEP_spd_1} with different
  parameter combinations.  In all cases, the iteration numbers are
  substantially smaller than the matrix size $8,388,608$.  Note that some
  higher iteration numbers in Figures~\ref{fig:iter_Krylov_eps13_k}
  and \ref{fig:iter_Krylov_eps1_k} are due to the corresponding
  multiplicity of eigenvalues. The higher iteration numbers in
  Figures~\ref{fig:iter_Krylov_eps13_gamma} and
  \ref{fig:iter_Krylov_eps1_gamma} are due to the clustering of the
  eigenvalues.
\item {\bf The timing is satisfactory.}  Figures~\ref{fig:CPUtime} and
  \ref{fig:CPUtime_1} show the CPU times for solving
  \eqref{eq:discrete_GEP_EH}. These results suggest that our approach
  is efficient for computing $10$ (interior) eigenvalue problems as large
  as $8,388,608$. This efficiency is mainly due to the highly
  efficient linear system solver for \eqref{eq:spd_LS}.

\item {\bf The linear systems in the form of \eqref{eq:spd_LS} are
    well-conditioned for the tested $\gamma$'s.}  We take a close look
  of the behavior of {\tt pcg} for solving the linear systems
  \eqref{eq:spd_LS} with various $\gamma$'s.  As shown in
  Figures~\ref{fig:iter_pcg_gamma_eps13} and
  \ref{fig:iter_pcg_gamma_eps1}, the (average) {\tt pcg} iteration
  numbers for solving linear system \eqref{eq:spd_LS} in the tested
  eigenvalue problems increase from $39$ to $90$ for $\varepsilon_{i}
  = 13$ and increase from $9$ to $59$ for $\varepsilon_{i} = 1$.  This
  behavior is parallel to the increase in the condition number of the
  linear system \eqref{eq:spd_LS} from
  $\frac{\varepsilon_{i}}{\varepsilon_{o}}=13$ to $\infty$ (singular)
  as $\gamma$ varies from $0$ to $\sqrt{\varepsilon_{i}}\approx
  3.61$. We fix $\k = [0.5, 0, 0]^{\top}$. A more important
  observation is that the timing results are quite satisfactory in the
  following sense.  For problems as large as $8.4$ million and a
  stopping tolerance as small as $10^{-14}$, these small iteration
  numbers suggest that the coefficient matrix in \eqref{eq:spd_LS} is
  quite well-conditioned.
\end{itemize}

% ======================================================================
\begin{figure}
  \begin{center}
    \begin{tabular}{ccc}
      \subfigure[Band structure for $(\varepsilon_{i}, \varepsilon_{o},  \gamma) = (13, 1, 0.5)$\label{fig:band_structure}]{\includegraphics[height=1.9in]{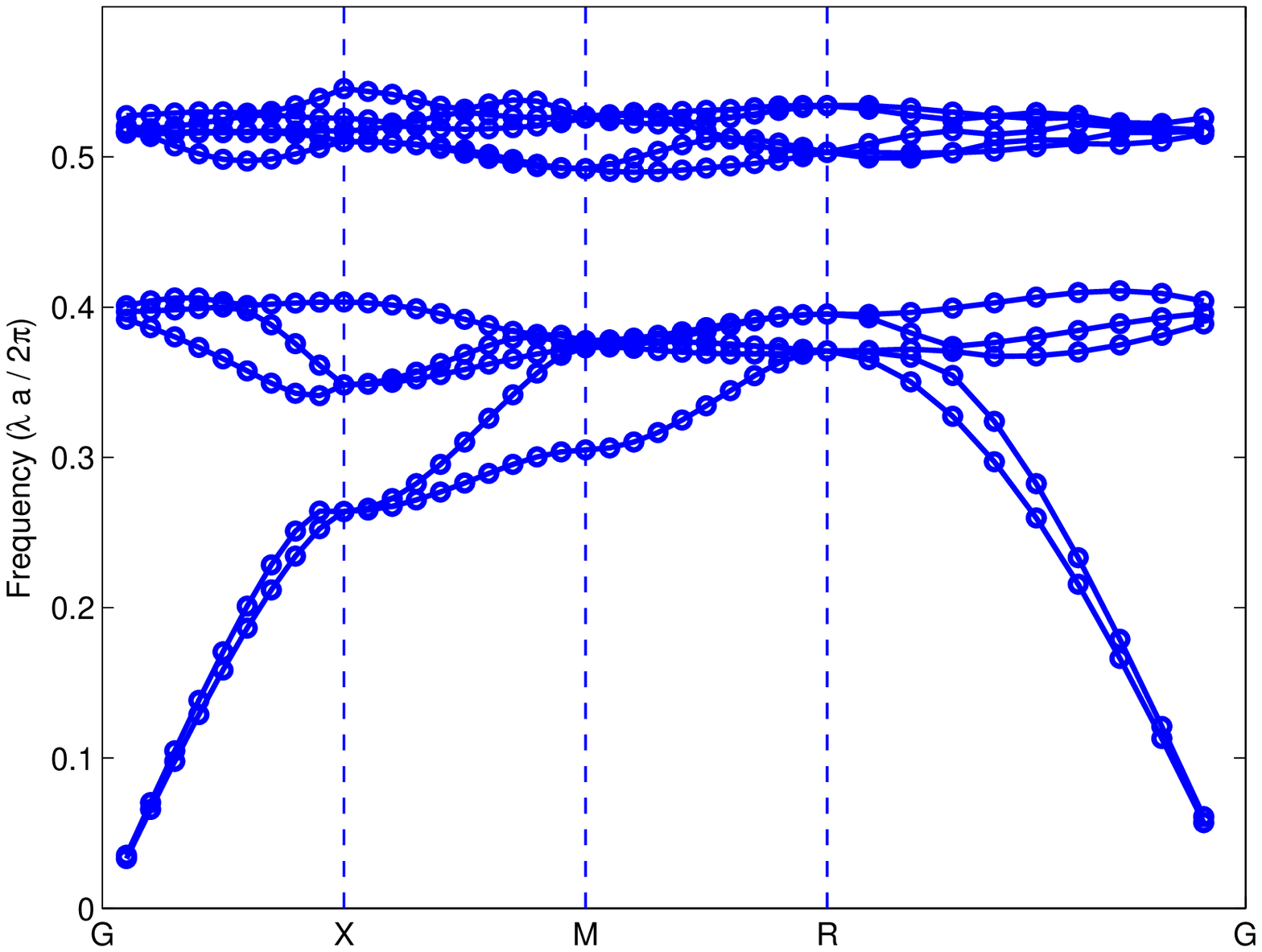}}
      & 
      \subfigure[Band structure for $(\varepsilon_{i}, \varepsilon_{o},  \gamma) = (1, 1, 0.8)$\label{fig:band_structure_1}]{\includegraphics[height=1.9in]{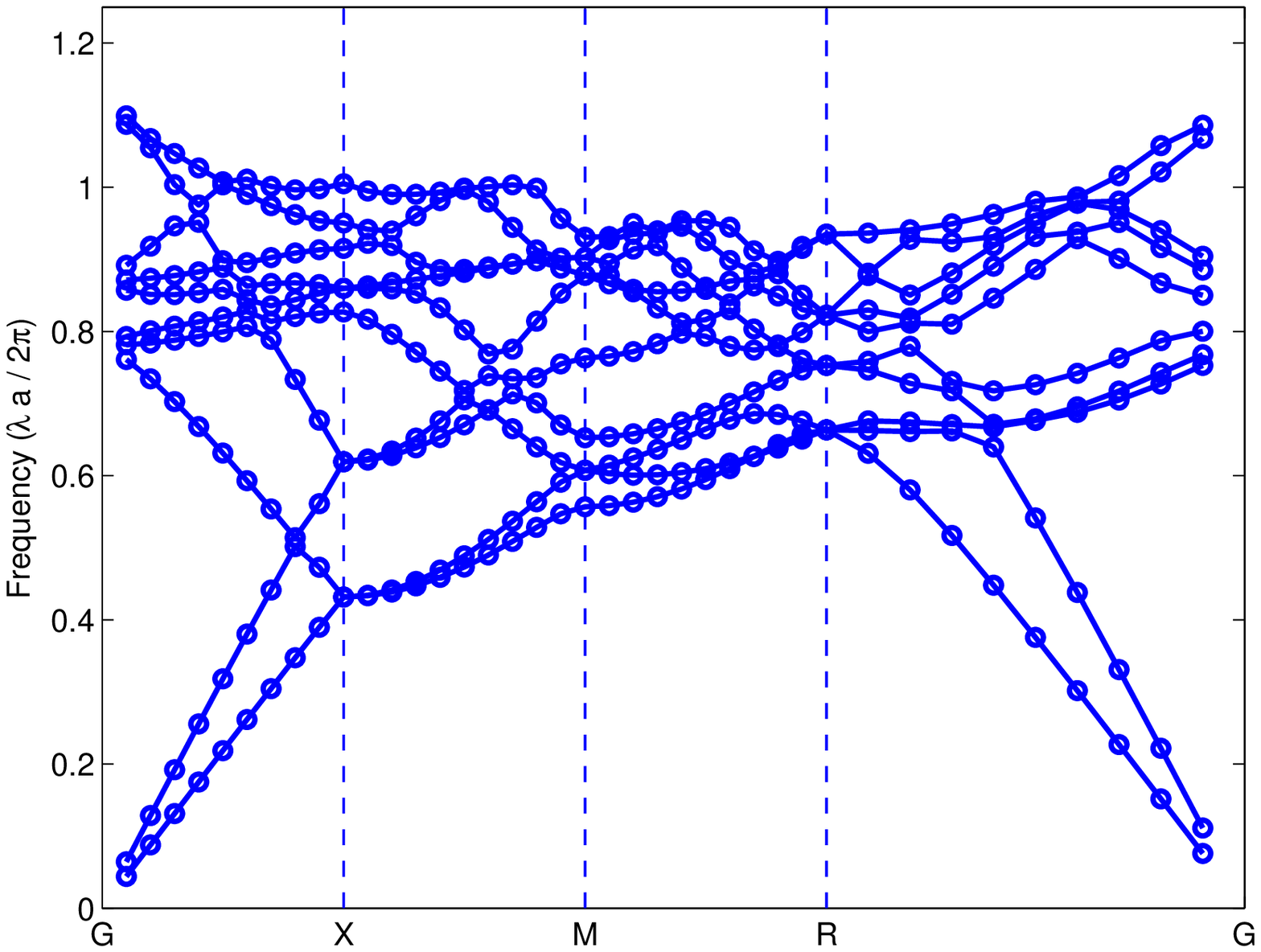}} \\
      \subfigure[Iteration numbers ranging
      from $120$ to $190$ with average $143$ for $(\varepsilon_{i}, \varepsilon_{o},  \gamma) = (13, 1, 0.5)$\label{fig:iter_Krylov_eps13_k}]{\includegraphics[height=1.9in]{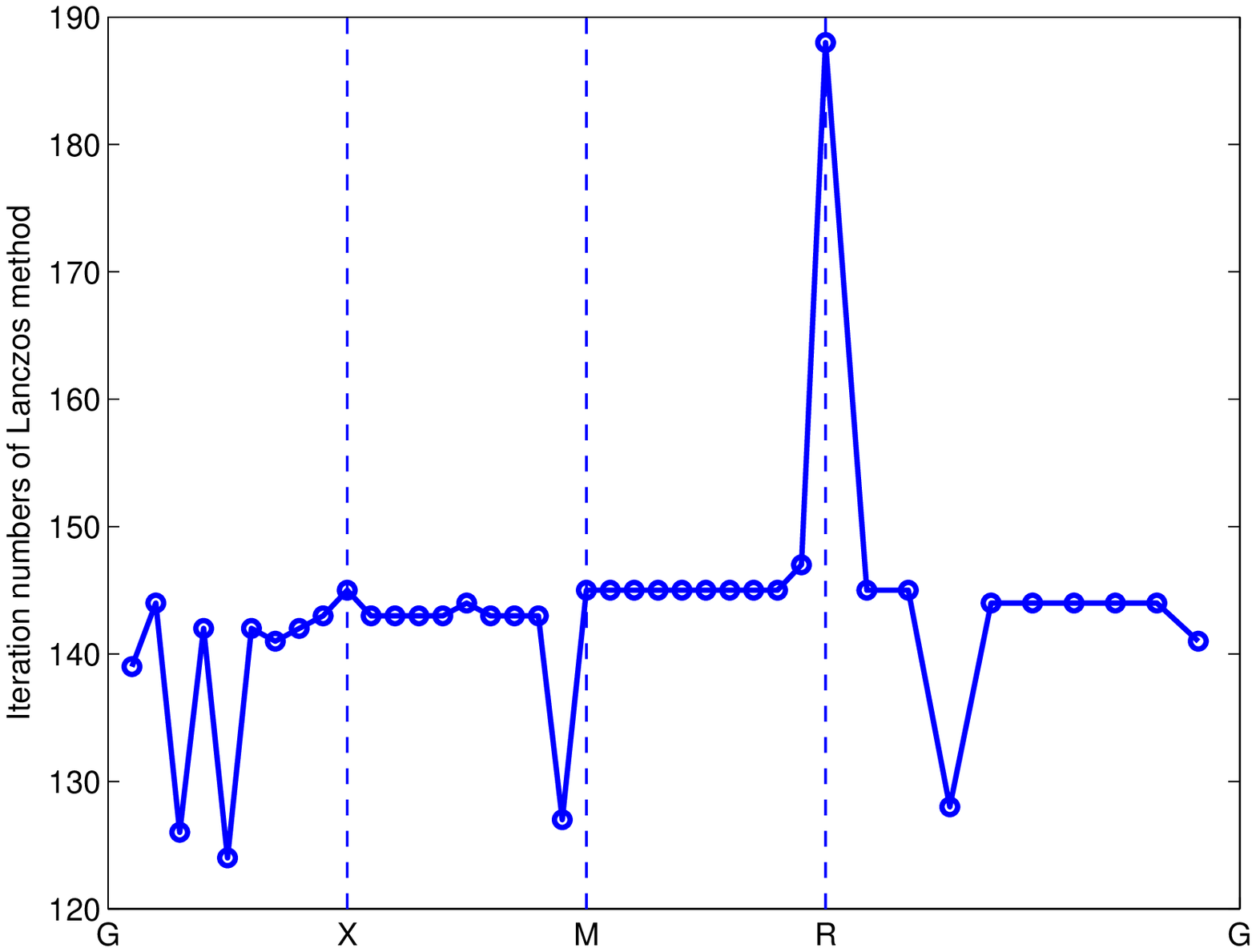}} & 
      \subfigure[Iteration numbers ranging from $100$ to $200$ with average $136$ for $(\varepsilon_{i}, \varepsilon_{o},  \gamma) = (1, 1, 0.8)$\label{fig:iter_Krylov_eps1_k}]{\includegraphics[height=1.9in]{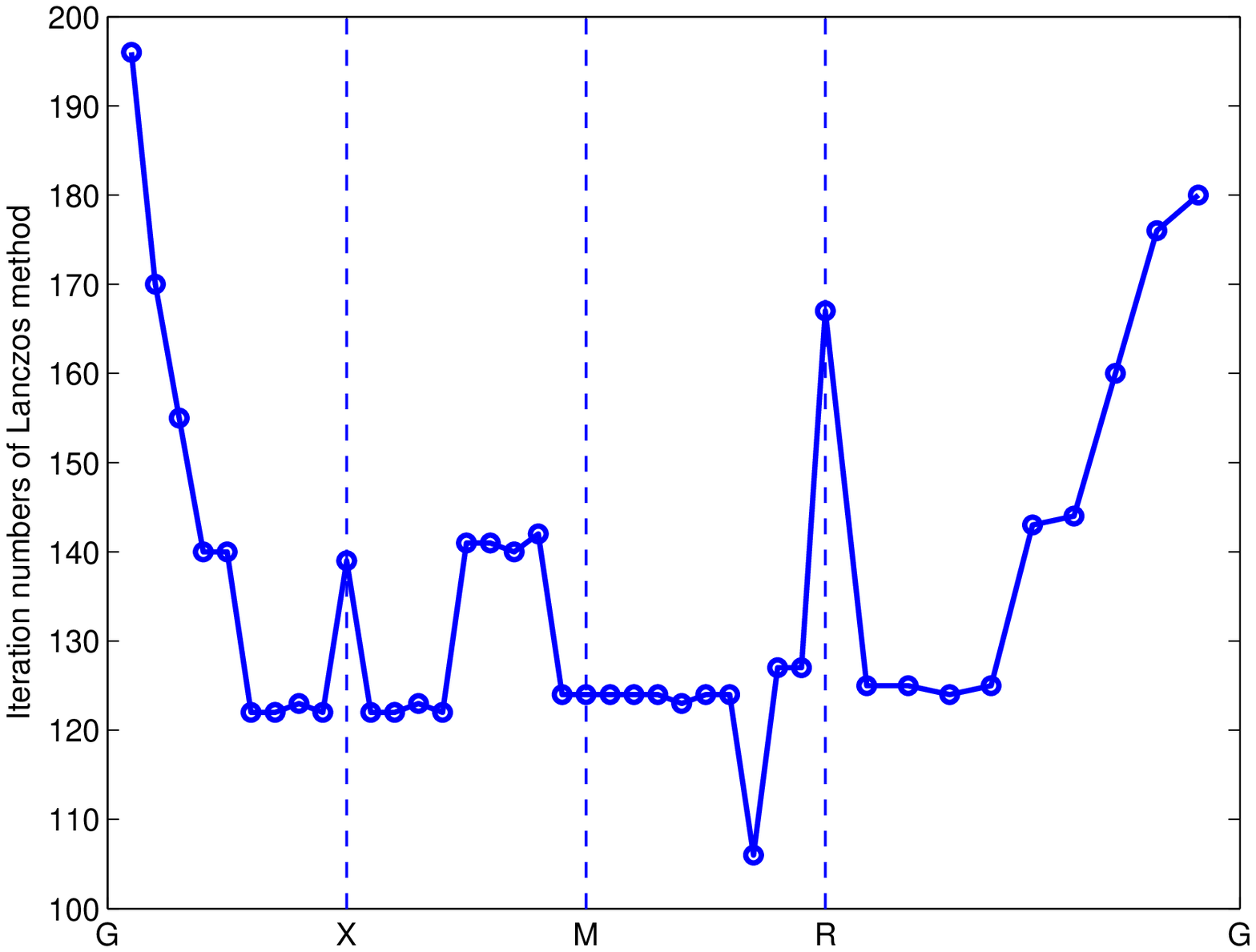}} \\
      \subfigure[CPU time ranging from $2.75$ to $4$ hours with average $3.2$ hours for $(\varepsilon_{i}, \varepsilon_{o},  \gamma) = (13, 1, 0.5)$\label{fig:CPUtime}]{\includegraphics[height=1.9in]{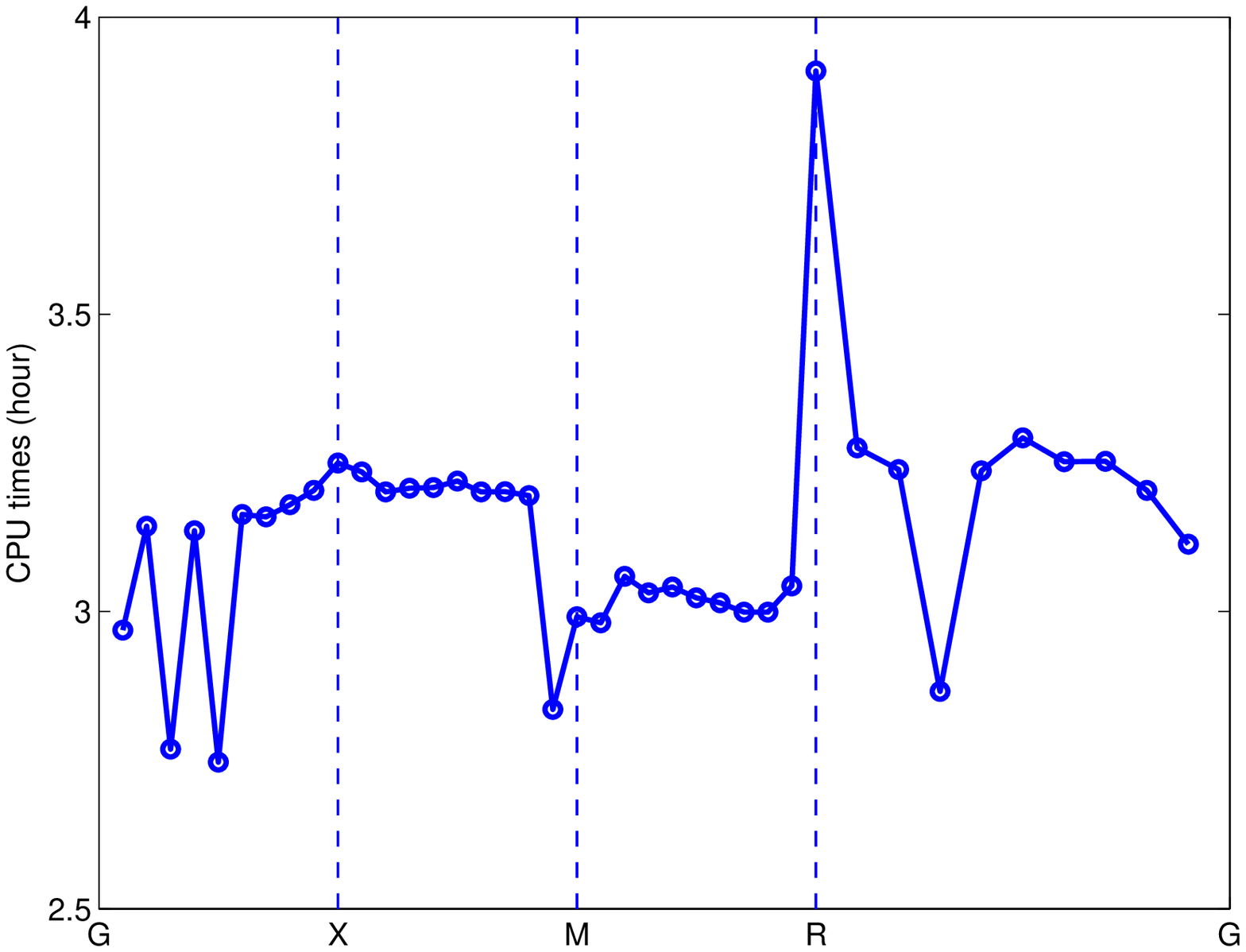}} 
      &
      \subfigure[CPU time ranging from $2.1$ to $4.1$ hours with average $2.8$ hours for $(\varepsilon_{i}, \varepsilon_{o},  \gamma) = (1, 1, 0.8)$\label{fig:CPUtime_1}]{\includegraphics[height=1.9in]{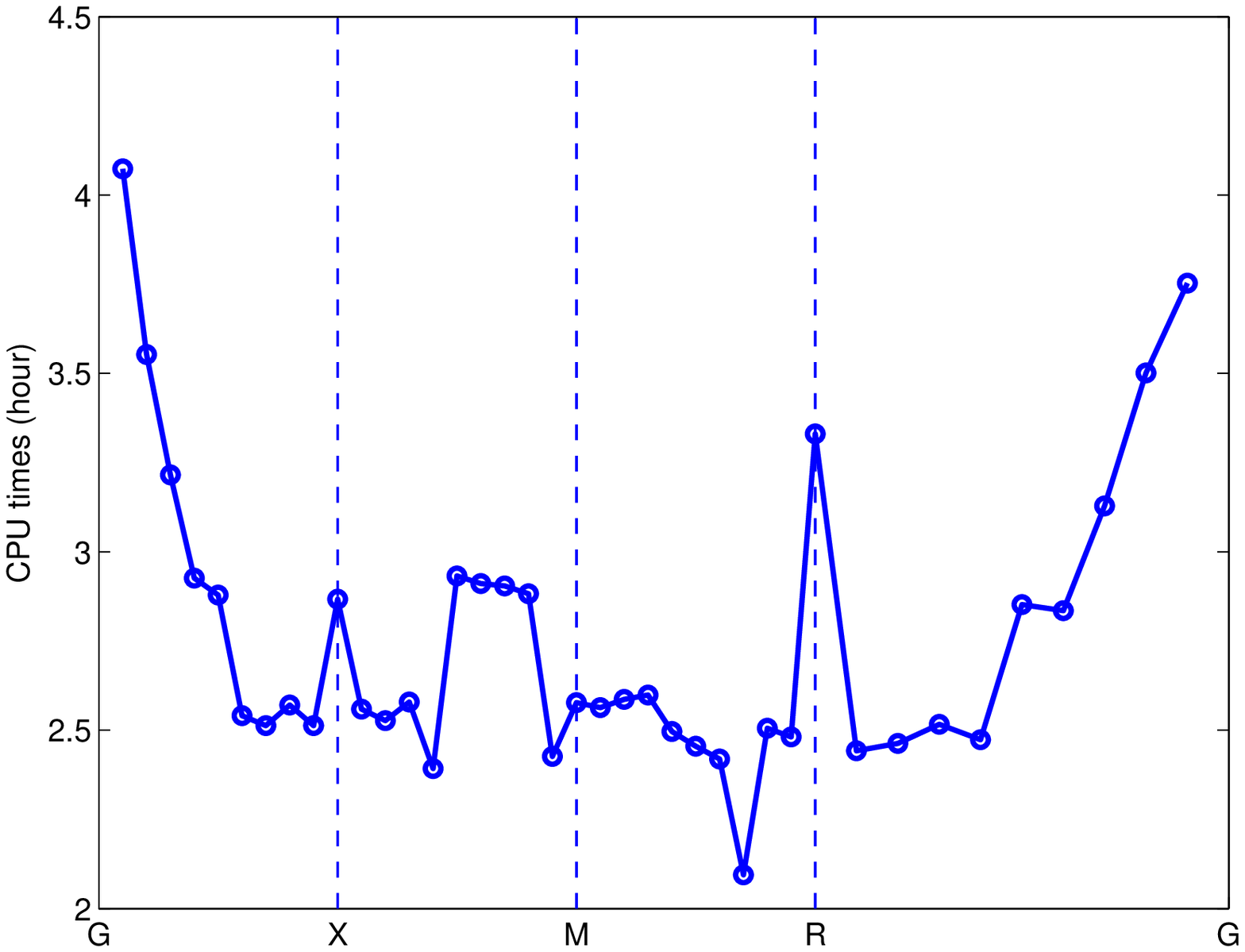}}
    \end{tabular}
    \caption{The band structures of three-dimensional chiral
      media, iteration number of the Lanczos method and the
      elapsed times of the NSF (Algorithm~\ref{alg:NullSpaceFree}) for
      solving \eqref{eq:discrete_GEP_EH} associated with various wave
      vectors $\k$.}
    \label{fig:FCC_rrimitive_cell}
  \end{center}
\end{figure}
% ======================================================================

% ======================================================================
\begin{figure}
  \begin{center}
    \begin{tabular}{ccc}
      \subfigure[Band structure for
      $(\varepsilon_{i}, \varepsilon_{o}, \gamma) = (13, 1, \gamma)$
\label{fig:band_gamma_eps13}]{\includegraphics[height=1.9in]{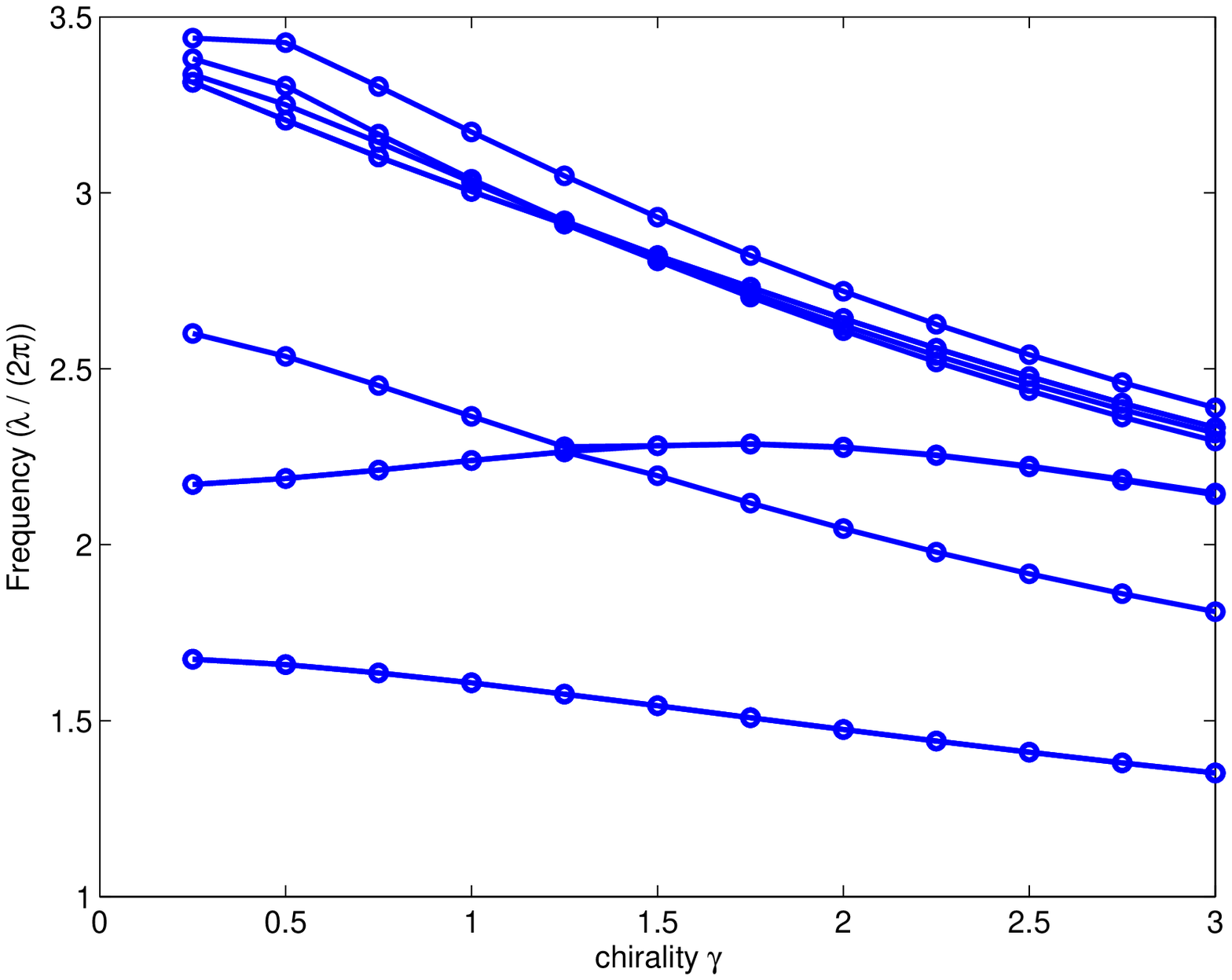}}
      & \subfigure[Band structure for 
      $(\varepsilon_{i}, \varepsilon_{o}, \gamma) = (1, 1, \gamma)$
      \label{fig:band_gamma_eps1}]{\includegraphics[height=1.9in]{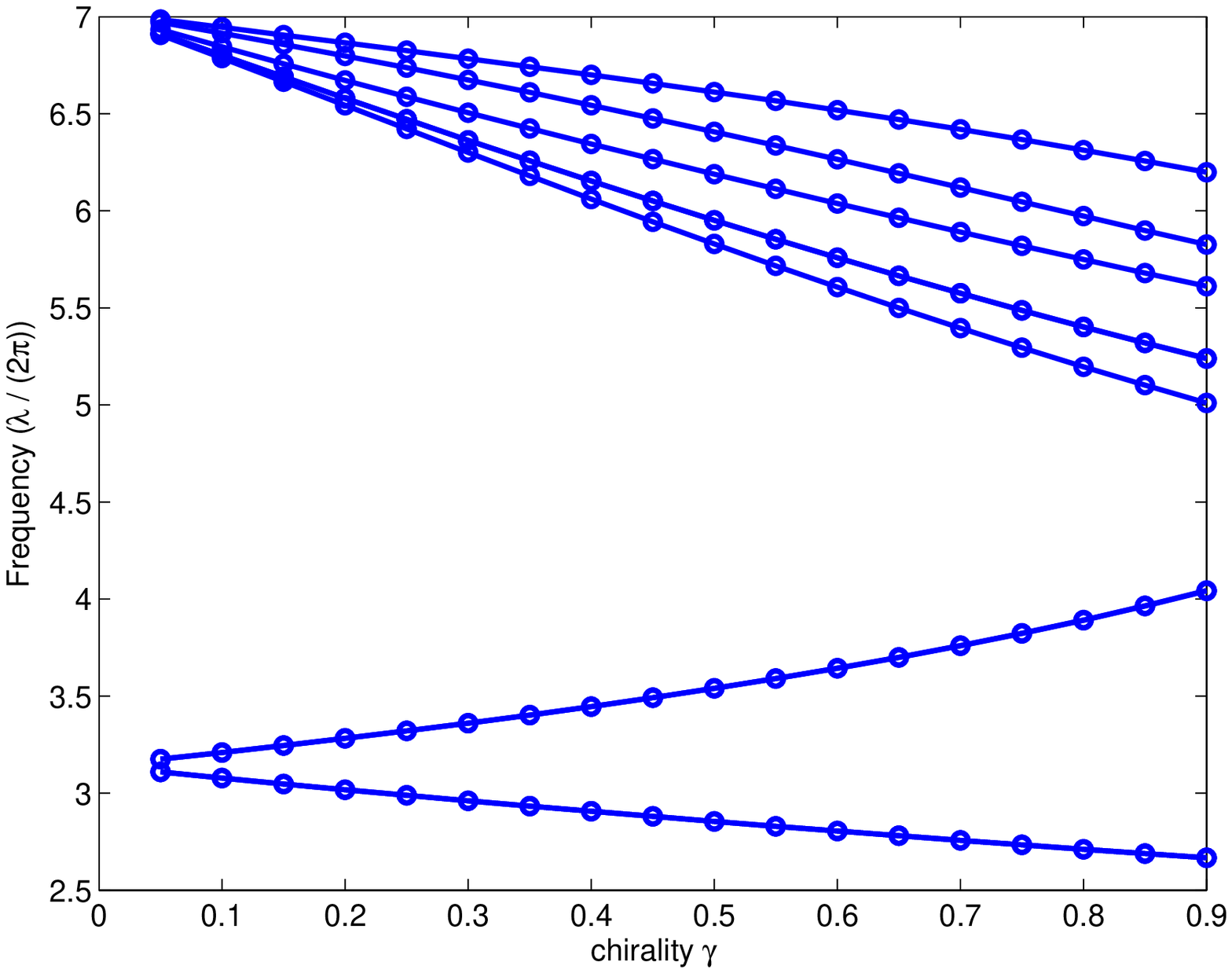}} \\
      \subfigure[Iteration numbers ranging from $124$ to $147$ for
      $(\varepsilon_{i}, \varepsilon_{o}, \gamma) = (13, 1, \gamma)$
      \label{fig:iter_Krylov_eps13_gamma}]{\includegraphics[height=1.9in]{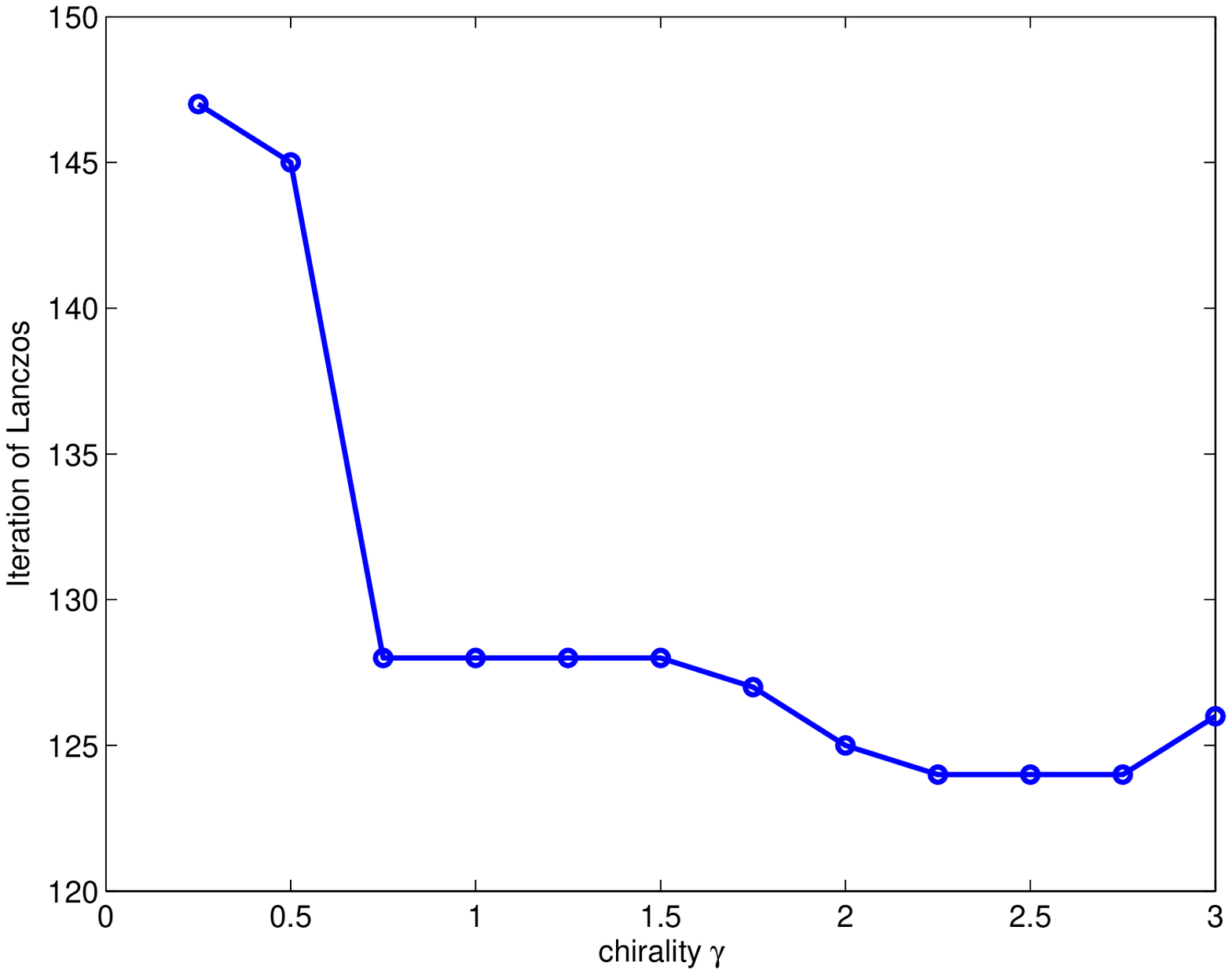}} & 
      \subfigure[Iteration numbers ranging from  $140$ to $260$ for
      $(\varepsilon_{i}, \varepsilon_{o}, \gamma) = (1, 1, \gamma)$
      \label{fig:iter_Krylov_eps1_gamma}]{\includegraphics[height=1.9in]{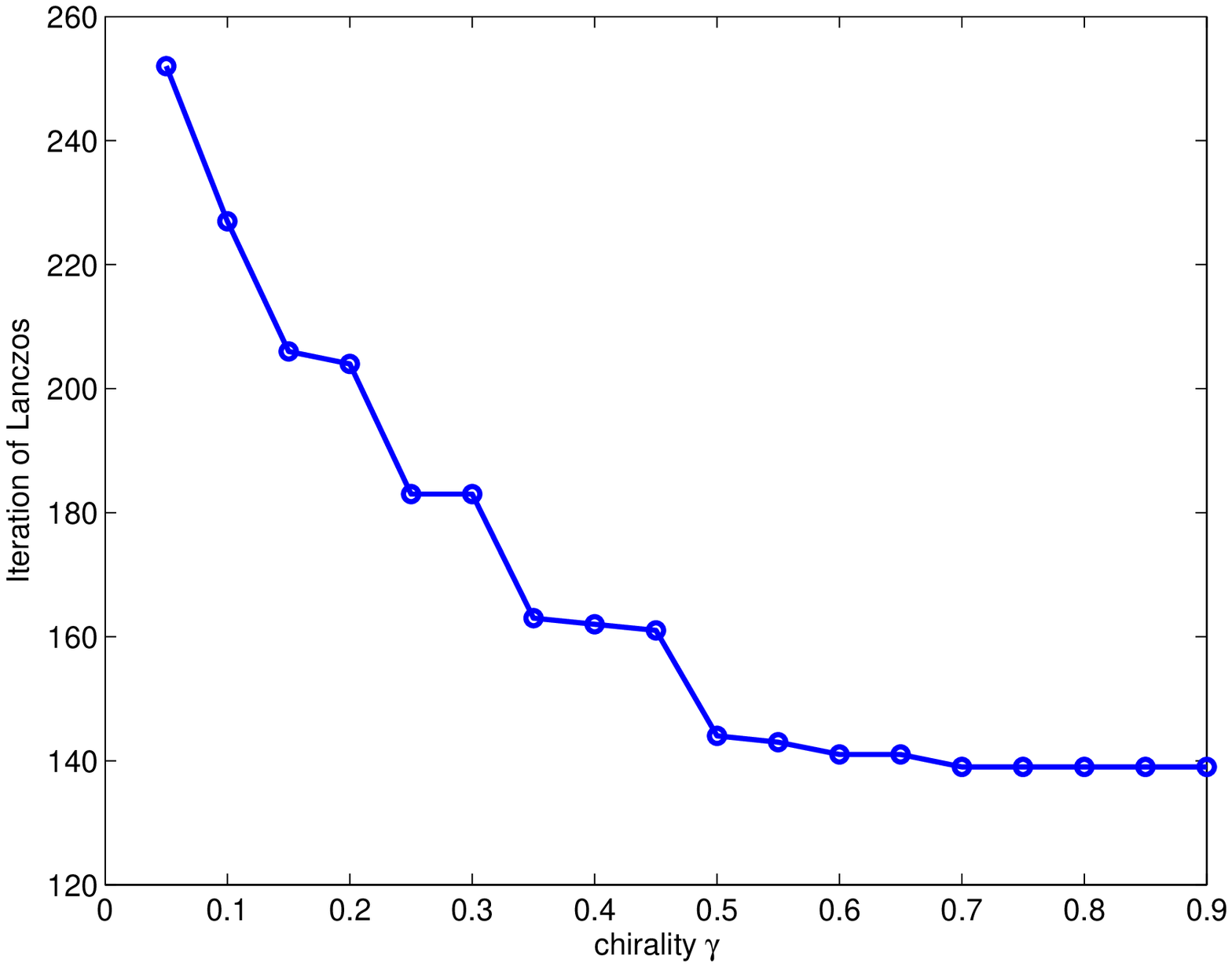}} \\
      \subfigure[CPU time increasing from $2.5$ to $6.5$ hours for 
      $(\varepsilon_{i}, \varepsilon_{o}, \gamma) = (13, 1, \gamma)$
      \label{fig:CPUtime_gamma_eps13}]{\includegraphics[height=1.9in]{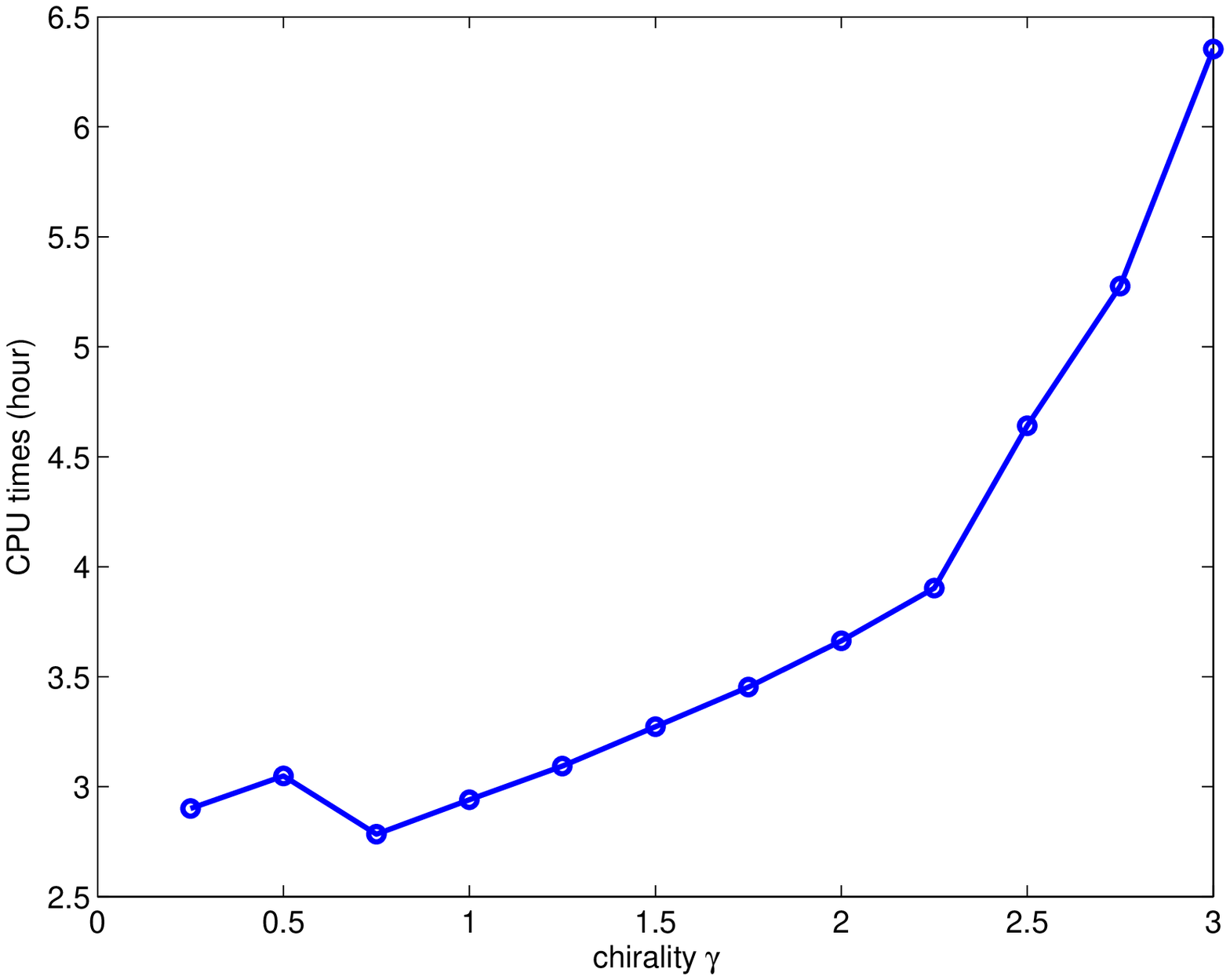}}
      &
      \subfigure[CPU time increasing from $1.4$ to $4.5$ hours for 
      $(\varepsilon_{i}, \varepsilon_{o}, \gamma) = (1, 1, \gamma)$
      \label{fig:CPUtime_gamma_eps1}]{\includegraphics[height=1.9in]{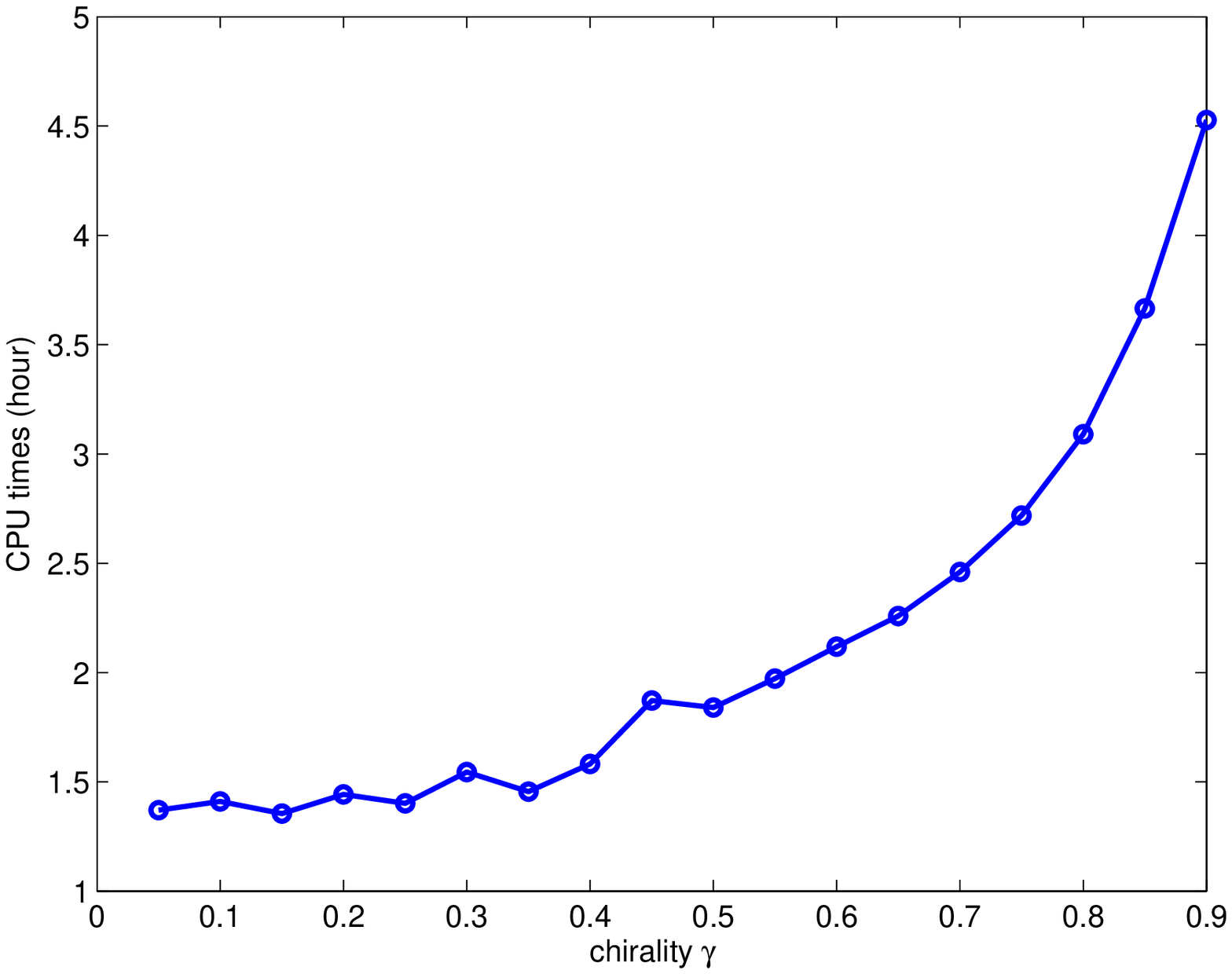}}\\
      \subfigure[Iteration numbers of CG for 
      $(\varepsilon_{i}, \varepsilon_{o}, \gamma) = (13, 1, \gamma)$
      \label{fig:iter_pcg_gamma_eps13}]{\includegraphics[height=1.9in]{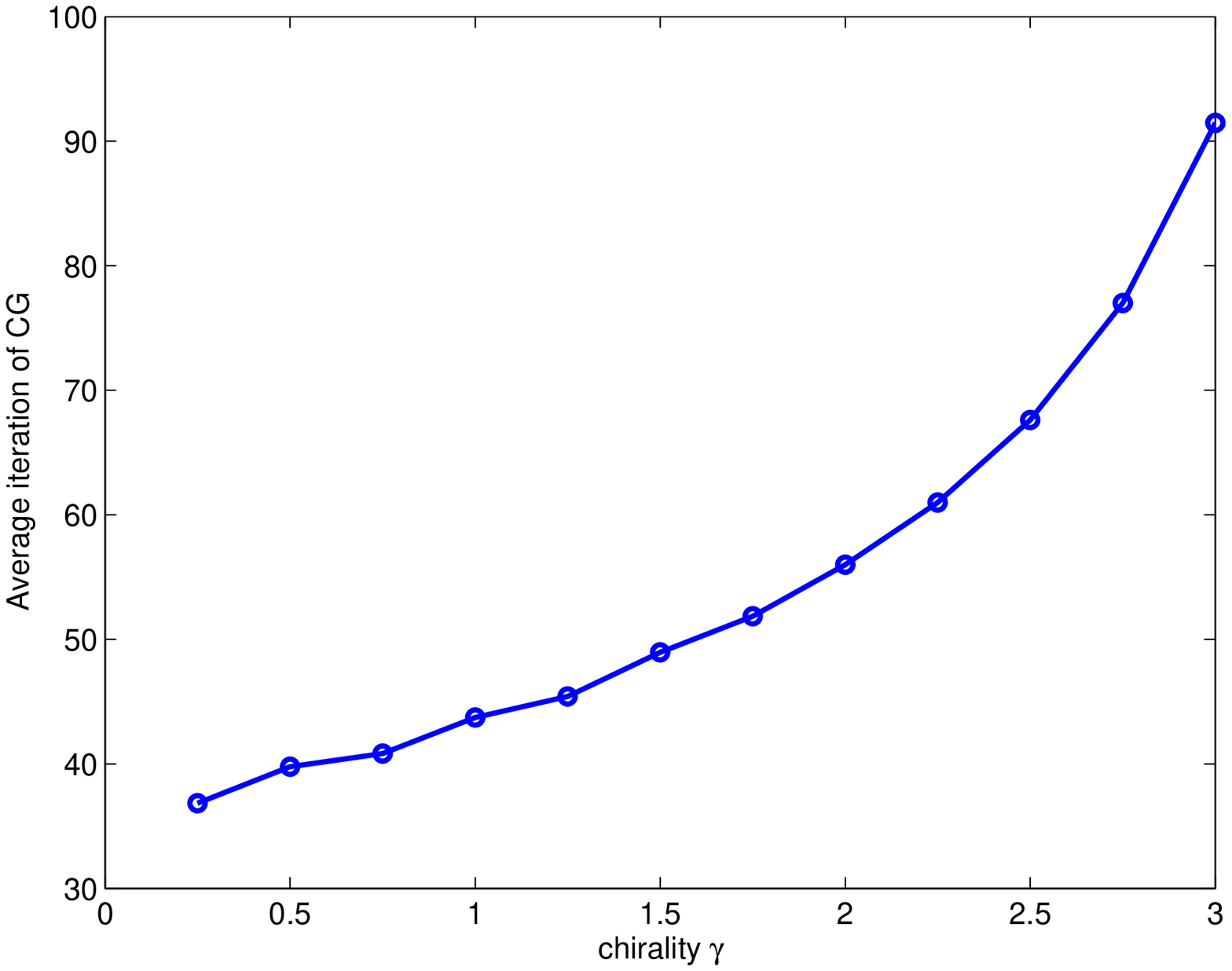}} & \subfigure[Iteration numbers of CG for 
      $(\varepsilon_{i}, \varepsilon_{o}, \gamma) = (1, 1, \gamma)$
\label{fig:iter_pcg_gamma_eps1}]{\includegraphics[height=1.9in]{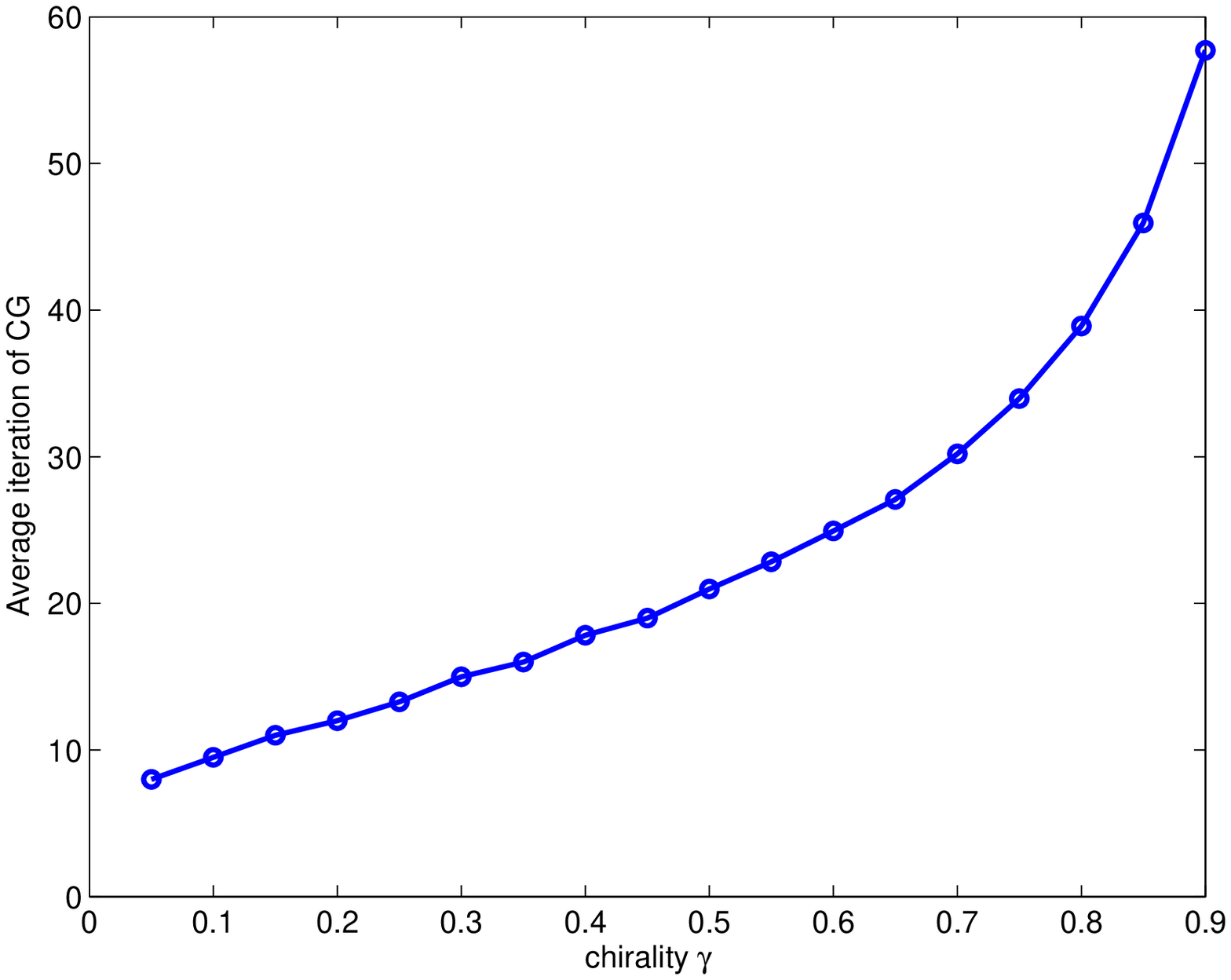}}\\
    \end{tabular}
    \caption{The band structures of three-dimensional chiral media,
      the average of iteration number of {\tt pcg}, the iteration
      numbers of the Lanczos method and the elapsed times of the NSF
      for solving \eqref{eq:discrete_GEP_EH} associated with various
      chirality parameters
      $\gamma$.} 
    \label{fig:band_gamma}
  \end{center}
\end{figure}
% ======================================================================

% ======================================================================
\section{Conclusions} 
\label{sec:conclude} 
% ======================================================================

In this paper, we focus on the generalized eigenvalue problems (GEP) arising in the
source-free Maxwell equation with magnetoelectric coupling effects in
the three-dimensional chiral media.  Solving the GEP is
a computational challenge. We have proposed a promising theoretical
framework for efficiently solving the eigenvalue problem. First, we
derive the singular value decomposition (SVD) of the discrete
single-curl operator.  Using this SVD, we explore an explicit form of
the basis for the invariant subspace corresponding to nonzero
eigenvalues of the GEP. By applying the basis, the GEP is reduced to a
null space free standard eigenvalue problem (NFSEP), which involves
only the eigenspace associated with the nonzero eigenvalues of the GEP
and excludes the zero eigenvalues so that they
do not degrade the computational efficiency.  Next, we show that all
nonzero eigenvalues of the GEP are real if $\mu_{d}$ and $\varepsilon_{d} -
\xi_{d}\mu_{d}^{-1} \zeta_{d}$ are Hermitian positive definite and
$\xi_{d}^{\ast} = \zeta_{d}$. Based on this property, we reformulate
the NFSEP to a null space free generalized eigenvalue problem whose
coefficient matrices are Hermitian and Hermitian positive definite
(HHPD-NFGEP).  We can then use the invert Lanczos method to solve the
HHPD-NFGEP and the conjugate gradient (CG) method to solve the
embedded linear systems.  The numerical results validate the correctness
of the proposed algorithms and the computer code implementation.
The results also suggest that the proposed methods are efficient in
terms of iteration and timing.

% ======================================================================
\section*{Appendix}
% ======================================================================

Proof of Lemma~\ref{lem:nonsingular_full_rank}: \\
(a). By the definition of $\Lambda_{q}$, $\Lambda_{q}$ is singular if
and only if $\Lambda_{\a_1,n_1}$, $\Lambda_{\a_2,n_2}$ and
$\Lambda_{\a_3,n_3}$ are singular if and only if
\begin{align*}
  e^{\theta_{n_1,i} + \theta_{\a_1,i}} - 1 &= 0, \\
  e^{\theta_{n_2,j} + \theta_{\a_2,j}} - 1 &= 0, \\
  e^{\theta_{n_3,k} + \theta_{\a_3,k}} - 1 &= 0
\end{align*}
for some $i \in \{ 1, \ldots, n_1\}$, $j \in \{ 1, \ldots, n_2\}$ and
$k \in \{ 1, \ldots, n_3\}$. That is
\begin{align*}
  \frac{i+\k \cdot \a_1}{n_1} = \frac{i+a \mathrm{k}_1}{n_1}, \
  \frac{j+\k \cdot \a_2}{n_2} = \frac{j+a \mathrm{k}_2}{n_2},
  \frac{k+\k \cdot \a_3}{n_3} = \frac{k+a \mathrm{k}_3}{n_3}
\end{align*}
are integers for some $i, j, k$. By the assumption $0 \leq
\mathrm{k}_1, \mathrm{k}_2, \mathrm{k}_3 \leq \frac{1}{2a}$, we have
$i = n_1$, $j = n_2$, $k = n_3$ and $\mathrm{k}_1 = \mathrm{k}_2 =
\mathrm{k}_3 = 0$ which contradict to $\k \neq 0$. Therefore,
$\Lambda_{q}$ is nonsingular.

(b). By the definitions of $\Lambda_1$, $\Lambda_2$ and $\Lambda_3$ in
\eqref{eq:eigendecomp_Cis}, the $((k-1)n_1n_2+(j-1)n_1+i)$th elements
of $\Lambda_1$, $\Lambda_2$ and $\Lambda_3$ are
\begin{align*}
  \delta_{x}^{-1} (e^{\theta_i} - 1), \quad \delta_{y}^{-1}
  (e^{\theta_j} - 1), \quad \delta_{z}^{-1} (e^{\theta_k} - 1),
\end{align*}
respectively, where
\begin{align*}
  \theta_i = \imath 2 \pi \left( \frac{i+\mathrm{k}_1}{n_1}\right),
  \quad \theta_j = \imath 2 \pi \left(
    \frac{j+\mathrm{k}_2}{n_2}\right), \quad \theta_k = \imath 2 \pi
  \left( \frac{k+\mathrm{k}_3}{n_3} \right),
\end{align*}
for $i = 1, \ldots, n_1$, $j = 1, \ldots, n_2$ and $k = 1, \ldots,
n_3$. Assume that $\Lambda_{p}$ does not have full column rank. Then
there exists some $i$, $j$ and $k$ such that
\begin{align*}
  \beta \delta_{z}^{-1} (e^{\theta_k} - 1) &=  \delta_{y}^{-1}(e^{\theta_j} - 1), \\
  \delta_{x}^{-1} (e^{\theta_i} - 1) &=  \alpha \delta_{z}^{-1} (e^{\theta_k} - 1), \\
  \beta \delta_{x}^{-1} (e^{\theta_i} - 1) &= \alpha \delta_{y}^{-1}
  (e^{\theta_j} - 1),
\end{align*}
which implies that
\begin{align*}
  \frac{\beta \sin \theta_k}{\delta_z} = \frac{ \sin
    \theta_j}{\delta_y}, \quad \frac{\sin \theta_i}{\delta_x} =
  \frac{\alpha \sin \theta_k}{\delta_z}, \quad \frac{\beta \sin
    \theta_i}{\delta_x} = \frac{\alpha \sin \theta_j}{\delta_y}
\end{align*}
and
\begin{align*}
  \frac{\beta (\cos \theta_k -1)}{\delta_z} &= \frac{\cos \theta_j-1}{\delta_y}, \\
  \frac{\cos \theta_i -1}{\delta_x} &= \frac{\alpha (\cos \theta_k-1)}{\delta_z}, \\
  \frac{\beta (\cos \theta_i-1)}{\delta_x} &= \frac{\alpha (\cos
    \theta_j-1)}{\delta_y}
\end{align*}
or equivalent to
\begin{align}
  \frac{\beta \sin \theta_i}{\alpha \delta_x} = \frac{\sin
    \theta_j}{\delta_y} = \frac{\beta \sin
    \theta_k}{\delta_z} \label{eq:sin_1}
\end{align}
and
\begin{align}
  \frac{\beta (\cos \theta_i-1)}{\alpha \delta_x} = \frac{\cos
    \theta_j - 1}{\delta_y} = \frac{\beta (\cos \theta_k-1)}{
    \delta_z}. \label{eq:cos_1}
\end{align}
From \eqref{eq:sin_1} and \eqref{eq:cos_1}, it holds that
\begin{align*}
  \left( \frac{\beta \sin \theta_i}{\alpha \delta_x}\right)^2 + \left(\frac{\beta (\cos \theta_i-1)}{\alpha \delta_x} \right)^2 &= \left( \frac{\sin \theta_j}{\delta_y} \right)^2 + \left( \frac{\cos \theta_j - 1}{\delta_y}\right)^2 \\
  &= \left(\frac{\beta \sin \theta_k}{ \delta_z} \right)^2 + \left(
    \frac{\beta (\cos \theta_k-1)}{\delta_z} \right)^2
\end{align*}
and then
\begin{align*}
  \frac{\beta^2(2 - 2\cos \theta_i)}{\alpha^2 \delta_x^2} =
  \frac{2-2\cos \theta_j}{\delta_y^2} = \frac{\beta^2(2-2\cos
    \theta_k)}{ \delta_z^2}.
\end{align*}
Therefore,
\begin{align}
  \frac{\beta (\cos \theta_i -1)}{\alpha \delta_x} = \frac{\alpha
    \delta_x}{\beta \delta_y} \frac{\cos \theta_j -1}{\delta_y}, \quad
  \frac{\beta (\cos \theta_k-1)}{\delta_z} = \frac{\delta_z}{\beta
    \delta_y} \frac{\cos \theta_j -1}{\delta_y}. \label{eq:cos_2}
\end{align}
From \eqref{eq:cos_1} and \eqref{eq:cos_2}, we can see that if $\alpha
\delta_x \neq \beta \delta_y$ and $ \delta_z \neq \beta \delta_y$,
then
\begin{align*}
  \cos \theta_i = \cos \theta_j = \cos \theta_k = 1.
\end{align*}
That is $\frac{i+\mathrm{k}_1}{n_1}$, $\frac{j+\mathrm{k}_2}{n_2}$ and
$\frac{k + \mathrm{k}_3}{n_3}$ must be integers. This contradicts to
the assumption for $\k$. Therefore, $\Lambda_{p}$ has full column
rank.

% ======================================================================
\section*{Acknowledgements}
% ======================================================================
This work is partially supported by the National Science Council, the
National Center for Theoretical Sciences, the Taida Institute for
Mathematical Sciences, and the Chiao-Da ST Yau Center in Taiwan.

\bibliographystyle{plain} 
\bibliography{research_papers}

\begin{thebibliography}{10}

\bibitem{cheng2007negative}
Xiangxiang Cheng, Hongsheng Chen, Lixin Ran, Bae-Ian Wu, Tomasz~M Grzegorczyk,
  and Jin~Au Kong.
\newblock Negative refraction and cross polarization effects in metamaterial
  realized with bianisotropic s-ring resonator.
\newblock {\em Physical Review B}, 76(2):024402, 2007.

\bibitem{Chern_2013_PhysD}
R.-L. Chern.
\newblock Anomalous dispersion in pseudochiral media: negative refraction and
  backward wave.
\newblock {\em J. Phys. D: Appl. Phys.}, 46:125307, 2013.

\bibitem{Chern_2013_Opt}
R.-L. Chern.
\newblock Wave propagation in chiral media: composite {F}resnel equations.
\newblock {\em J. Opt.}, 15:075702, 2013.

\bibitem{ChernChang_2013_ApplPhys}
R.-L. Chern and P.-H. Chang.
\newblock Negative refraction and backward wave in chiral mediums:
  Illustrations of {G}aussian beams.
\newblock {\em J. Appl. Phys.}, 113:153504, 2013.

\bibitem{ChernChang_2013_Opt}
R.-L. Chern and P.-H. Chang.
\newblock Negative refraction and backward wave in pseudochiral mediums:
  illustrations of {G}aussian beams.
\newblock {\em Opt. Express}, 21:2657--2666, 2013.

\bibitem{ChernChang_2013_OptSoc}
R.-L. Chern and P.-H. Chang.
\newblock Wave propagation in pseudochiral media: generalized {F}resnel
  equations.
\newblock {\em J. Opt. Soc. Am. B}, 30:552--558, 2013.

\bibitem{ChoBaiMiaRuh_97}
J.~Chongjun, Q.~Bai, Y.~Miao, and Q.~Ruhu.
\newblock Two-dimensional photonic band structure in the chiral medium-transfer
  matrix method.
\newblock {\em Opt. Commun.}, 142:179--183, 1997.

\bibitem{HuangHsiehLinWang_SIMAX_13}
T.-M. Huang, H.-E. Hsieh, W.-W. Lin, and W.~Wang.
\newblock Eigendecomposition of the discrete double-curl operator with
  application to fast eigensolver for three dimensional photonic crystals.
\newblock {\em SIAM J. Matrix Anal. Appl.}, 34:369--391, 2013.

\bibitem{HuangHsiehLinWang_13_RePort}
T.-M. Huang, H.-E. Hsieh, W.-W. Lin, and W.~Wang.
\newblock Fast lanczos eigenvalue solvers for band structures of three
  dimensional photonic crystals with face-centered cubic lattice.
\newblock Technical report, NCTS Preprints in Mathematics, National Tsing Hua
  University, Hsinchu, Taiwan, 2013-3-003, 2013.

\bibitem{HuangKuoWang_13}
T.-M. Huang, Y.-C. Kuo, and W.~Wang.
\newblock Computing extremal eigenvalues for three-dimensional photonic
  crystals with wave vectors near the {B}rillouin zone center.
\newblock {\em J. Sci. Comput.}, 55:529--551, 2013.

\bibitem{JoJoWiMe_08}
J.~D. Joannopoulos, S.~G. Johnson, J.~N. Winn, and R.~D. Meade.
\newblock {\em Photonic Crystals: Molding the Flow of Light}.
\newblock Princeton University Press, 2008.

\bibitem{Kamentskii_98}
E.~O. Kamenetskii.
\newblock Theory of bianisotropic crystal lattices.
\newblock {\em Phys. Rev. E}, 57:3563--3573, 1998.

\bibitem{Kitt}
C.~Kittel.
\newblock {\em Introduction to solid state physics}.
\newblock Wiley, New York, 2005.

\bibitem{kong1972theorems}
Jin~Au Kong.
\newblock Theorems of bianisotropic media.
\newblock {\em Proceedings of the IEEE}, 60(9):1036--1046, 1972.

\bibitem{Lekner_96}
J.~Lekner.
\newblock Optical properties of isotropic chiral media.
\newblock {\em Pure Appl. Opt.}, 5:417--443, 1996.

\bibitem{LiuZhang_11}
Y.~Liu and X.~Zhang.
\newblock Metamaterials: a new frontier of science and technology.
\newblock {\em Chem. Soc. Rev.}, 40:2494--2507, 2011.

\bibitem{mackay2009negative}
Tom~G Mackay and Akhlesh Lakhtakia.
\newblock Negative refraction, negative phase velocity, and counterposition in
  bianisotropic materials and metamaterials.
\newblock {\em Physical Review B}, 79(23):235121, 2009.

\bibitem{ReedSimon_78}
M.~Reed and B.~Simon.
\newblock Methods of modern mathematical physics.
\newblock In {\em Analysis of Operators IV}. Academic Press, San Diego, CA,
  1978.

\bibitem{serdiukov2001electromagnetics}
A.~Serdyukov, I.~Semchenko, S.~Tretyakov, and A.~Sihvola.
\newblock {\em Electromagnetics of bi-anisotropic materials: Theory and
  applications}, volume~11.
\newblock Taylor \& Francis, 2001.

\bibitem{SerdSemcTretSihv_01}
A.~Serdyukov, I.~Semchenko, S.~Tretyakov, and A.~Sihvola.
\newblock {\em Electromagnetics of Bi-anisotropic Materials: Theory and
  Applications}.
\newblock Gordon and Breach Science, 2001.

\bibitem{SihvViitLindTret_book_94}
A.~H. Sihvola, A.~J. Viitanen, I.~V. Lindell, and S.~A. Tretyakov.
\newblock {\em Electromagnetic Waves in Chiral and Bi-Isotropic Media}.
\newblock Boston: Artech House, 1994.

\bibitem{TretSihvSochSimo_98}
S.~A. Tretyakov, A.~H. Sihvola, A.~A. Sochava, and C.~R. Simovski.
\newblock Magnetoelectric interactions in bi-anisotropic media.
\newblock {\em J. Electromagn. Waves Appl.}, 12:481--497, 1998.

\bibitem{tretyakov2007bianisotropic}
Sergei~A Tretyakov, Constantin~R Simovski, and M~Hudli{\v{c}}ka.
\newblock Bianisotropic route to the realization and matching of backward-wave
  metamaterial slabs.
\newblock {\em Physical Review B}, 75(15):153104, 2007.

\bibitem{WangZhouKoscKafeSouk_09}
B.~Wang, J.~Zhou, T.~Koschny, M.~Kafesaki, and C.~M Soukoulis.
\newblock Chiral metamaterials: simulations and experiments.
\newblock {\em J. Opt. A: Pure Appl. Opt.}, 11:114003, 2009.

\bibitem{WeigLakh_03}
W.~S. Weiglhofer and A.~Lakhtakia.
\newblock {\em Introduction to Complex Mediums for Optics and
  Electromagnetics}.
\newblock Washington, DC:SPIE, 2003.

\bibitem{Yee:66}
K.~Yee.
\newblock Numerical solution of initial boundary value problems involving
  {M}axwell's equations in isotropic media.
\newblock {\em IEEE Trans. Antennas and Propagation}, 14:302--307, 1966.

\bibitem{ZhaoKoscSouk_10}
R.~Zhao, T.~Koschny, and C.M. SoukSoukoulis.
\newblock Chiral metamaterials: retrieval of the effective parameters with and
  without substrate.
\newblock {\em Optics Express}, 18:14553--14567, 2010.

\end{thebibliography}

\end{document}